\definecolor{rltblue}{rgb}{0,0,0.75}
\newtheorem{thm}{Theorem}[section]
\newtheorem{theorem}[thm]{Theorem}
\newtheorem{lemma}[thm]{Lemma}
\newtheorem{proposition}[thm]{Proposition}
\theoremstyle{definition}
\newtheorem{definition}[thm]{Definition}
\theoremstyle{remark}
\newtheorem{obs}[thm]{Observation}
\newtheorem{claim}[thm]{Claim}
\theoremstyle{plain}
\def\om{\omega}
\def\S{\mathcal S}
\def\F{\mathcal F}
\def\a{\alpha}
\def\b{\beta}
\def\g{\gamma}
\def\si{\sigma}
\def\Nat{{\mathbb N}}
\def\isom{\cong}
\def\O{{\mathcal O}}
\def\Si{\Sigma}
\def\la{\langle}
\def\ra{\rangle}
\def\om{\omega}
\def\a{\alpha}
\def\b{\beta}
\def\g{\gamma}
\def\si{\sigma}
\def\Nat{{\mathbb N}}
\def\isom{\cong}
\def\O{{\mathcal O}}
\def\Si{\Sigma}
\def\ra{\rangle}
\def\la{\langle}
\newcommand \omck{\omega_1^{\textup{CK}}}
\def\Os{\O^*}
\newcommand \seq[1]{{\left\langle{#1}\right\rangle}}
\newcommand{\floor}[1]{{\left\lfloor #1 \right\rfloor}} 
\newcommand{\rest}[1]{\! \upharpoonright\!{#1}} 
\newcommand \tth{{}^{\textup{th}}}
\newcommand{\cero}{\mathbf{0}}
\newcommand{\w}{\omega}
\newcommand \s{\sigma}
\renewcommand \le {\leqslant}
\renewcommand \leq {\leqslant}
\renewcommand \ge {\geqslant}
\renewcommand \geq {\geqslant}
\newcommand \andd{\,\,\,\&\,\,\,}
\newcommand \Rat{\mathbb{Q}}
\newcommand \degd{\mathbf d}
\newcommand \degb{\mathbf b}
\newcommand \dega{\mathbf a}
\newcommand \MM{\mathcal M}
\newcommand \CC{\mathcal C}
\newcommand \DD{\mathcal D}
\newcommand \GG{\mathcal G}
\newcommand \OO{\mathcal O}
\newcommand \NN{\mathcal N}
\newcommand \KK{\mathcal K}
\newcommand \AAA{\mathcal A}
\newcommand \LL{\mathcal L}
\newcommand \FF{\mathcal F}
\newcommand \nle{\nleqslant}
\DeclareMathOperator \low{low}
\newcommand \ZFC{\textup{ZFC}}
\newcommand \Int{\mathbb Z}
\newcommand \Tur{\textup{T}}
\newcommand \SSS{\mathcal S}
\newcommand \PP{\mathcal P}
\newcommand \rooot{\texttt{root}}
\DeclareMathOperator \range{range}
\DeclareMathOperator\supr{\mathtt{sup}}
\DeclareMathOperator\rk{\mathtt{rk}}
\DeclareMathOperator\fat{\mathtt{fat}}
\DeclareMathOperator\mini{\mathtt{min}}
\DeclareMathOperator\hht{\mathtt{ht}}
\DeclareMathOperator \Spec{Spec}
\newcommand \integer[1]{\floor{#1}}
\title{Relative to any non-hyperarithmetic set}
\author{Noam Greenberg}
\address{School of Mathematics, Statistics and Operations Research \\ Victoria University of Wellington \\
New Zealand}
\email{greenberg@msor.vuw.ac.nz}
\author{Antonio Montalb\'an}
\address{Department of Mathematics\\
University of Chicago\\
5734 S. University ave.\\
Chicago, IL 60637, USA}
\email{antonio@math.uchicago.edu}
\author{Theodore A. Slaman}
\address{Department of Mathematics, University of California Berkeley, Berkeley, Califor- 
nia 94720-3840 }
\email{slaman@math.berkeley.edu}
\thanks{The first author was partially supported by a Marsden grant from the royal society of New Zealand.
The second author was partially supported by NSF grant DMS-0901169 and the Packard Fellowship.}
\begin{document}

\maketitle

\today

\begin{abstract}
We prove that there is a structure, indeed a linear ordering, whose degree spectrum is the set of all non-hyperarithmetic degrees. We also show that degree spectra can distinguish measure from category. 
\end{abstract}

%

\section{Introduction}

Slaman \cite{Sla98} and Wehner \cite{Weh98} independently proved the following theorem (with a third proof later provided by Hirschfeldt \cite{Hir06}).

\begin{theorem}\label{thm_Slaman_Wehner}
	There is a countable structure $\MM$ such that for any set $X$, $X$ computes an isomorphic copy of $\MM$ if and only if $X$ is not computable. 
\end{theorem}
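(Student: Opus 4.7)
The plan is to prove the theorem via the family-of-sets technique, following Wehner. First reduce the problem to constructing a family $\FF$ of finite subsets of $\w$ with the property that $\FF$ admits a c.e.-in-$X$ enumeration (listing canonical codes of its members) if and only if $X$ is non-computable. The structure $\MM$ is then produced by a standard coding: for each $F \in \FF$ take infinitely many disjoint copies of a ``flower'' graph internally encoding $F$, and let $\MM$ be the disjoint union. A copy of $\MM$ computable from $X$ then translates to an $X$-c.e.\ listing of $\FF$ and conversely, so the degree spectrum of $\MM$ coincides with the set of oracles that enumerate $\FF$, which will be the non-computable degrees.

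For the family I would take, with $\seq{n,F}$ a canonical finite-set code of the pair,
\[
\FF = \{\seq{n,F} : n \in \w,\ F \subseteq \w \text{ finite},\ F \neq W_n\},
\]
so the $n$-th ``column'' of $\FF$ consists of the codes of all finite sets distinct from $W_n$. The easy half is that $\FF$ is not c.e.: a c.e.\ enumeration would induce a computable function $f$ with $f(n) \neq W_n$ for every $n$ (take $f(n)$ to be the first set listed in column $n$), and the recursion theorem furnishes $e$ with $W_e = f(e)$, a contradiction.

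The hard half, and the main obstacle, is showing that every non-computable $X$ produces an $X$-c.e.\ enumeration of $\FF$. The straightforward detection of $F \neq W_n$ via ``wait until some element of $W_{n,s}$ falls outside $F$'' handles every pair in which either $W_n$ is infinite or $W_n \not\subseteq F$. What remains is the case $W_n$ finite with $W_n \subsetneq F$, for which there is no computable way to decide that $W_n$ has stabilised and hence no computable way to certify $F \neq W_n$. The remedy is to let $X$ arbitrate: run in parallel, for each $n$, a tree of strategies in which each node guesses a possible final value of $W_n$ and enumerates into column $n$ accordingly, using bits of $X$ to choose the strategy to release at each stage. The construction must be engineered so that the only way the overall enumeration could fail---by either omitting a legitimate $\seq{n,F}$ or by mistakenly listing $\seq{n,W_n}$---is for $X$ to agree with a computable sequence of choices, forcing $X$ itself to be computable. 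Turning this purely combinatorial feature, non-computability, into a working enumeration, and verifying the contrapositive, is the delicate part and is carried out by a careful priority argument (equivalently, forcing with c.e.\ conditions) tuned to the exact shape of $\FF$.
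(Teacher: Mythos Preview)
Your framework---Wehner's family $\FF$ together with the bouquet/flower-graph coding---is exactly the approach the paper takes (in relativised form), and your easy direction via the recursion theorem is correct. The gap is in the hard direction.

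First, a notion mismatch: the flower-graph coding does \emph{not} correspond to ``listing canonical codes'' of members of $\FF$. Computing a copy of the bouquet graph $G(\FF)$ is equivalent to producing an $X$-c.e.\ set $A\subseteq\w^2$ with $\FF=\{A^{[k]}:k<\w\}$; each $A^{[k]}$ is allowed to grow as a c.e.\ set, and you never need to certify that it has stabilised. So the obstacle you isolate---``cannot computably certify that $W_n$ has stopped growing, hence cannot certify $W_n\subsetneq F$''---is an artefact of demanding the stronger listing-of-codes enumeration, which you do not need.

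Second, and more importantly, your sketch of the enumeration (``tree of strategies \dots\ bits of $X$ choose which to release \dots\ priority argument / forcing with c.e.\ conditions'') is not a proof; it is a promise of one, with the key mechanism left unstated. The paper's construction (Wehner's) is much simpler and involves no priority at all. For each triple $n=(e,u,s_0)$, initialise $A^{[n]}_{s_0}=\{e\}\oplus D_u$. At each later stage $s$, if the current $A^{[n]}_s$ happens to equal $\{e\}\oplus W_{e,s}$, enumerate into $A^{[n]}$ the least fresh element of $X\oplus\overline{X}$. That is the entire construction. Every $\{e\}\oplus F\in\FF$ appears as some $A^{[n]}$ (take $u$ with $D_u=F$ and $s_0$ large enough that $W_{e,s}\ne F$ for all $s\ge s_0$). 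And each $A^{[n]}$ stays finite: were it infinite, we would have $A^{[n]}=\{e\}\oplus W_e=\{e\}\oplus(D_u\cup(X\oplus\overline{X}))$, making $X\oplus\overline{X}$ c.e.\ and hence $X$ computable. The single idea you are missing is this use of $X\oplus\overline{X}$ as the escape mechanism; it replaces your unspecified priority machinery entirely.
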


Theorem \ref{thm_Slaman_Wehner} fits into the general research programme of classifying \emph{degree spectra} of countable structures. Recall that the (Turing) degree of a structure $\MM$ whose universe is a subset of the set of natural numbers $\w$ is defined to be the degree of its atomic (equivalently, quantifier-free) diagram; and that the \emph{degree spectrum} $\Spec(\MM)$ of a countable structure is defined to be the collection of Turing degrees which compute an isomorphic copy of $\MM$. Classifying degree spectra of countable structures amounts to finding which computability-theoretic aspects of a set $X$ of natural numbers are reflected in the isomorphism types of countable structures which $X$ can effectively present. Equivalently, the question is how much of the richness of the structure of the power set of the reals (or the Turing degrees) is reflected in the substructure consisting only of the degree spectra of countable structures. In other words, we ask which properties of sets of Turing degrees can be discerned by restricting to nicely definable sets; in this instance, ``nicely definable'' means being a degree spectrum of a structure, which is a particularly nice $\mathbf{\Sigma}^1_1$ definition of a set. In the example above, the Slaman-Wehner theorem says that being noncomputable is a property which is so definable. Even though there is no noncomputable set of least Turing degree, there is a single countable structure whose isomorphism type captures what it means for a set to contain any noncomputable information. 

Among properties of reals, being noncomputable is a ``large'' property, since the collection of noncomputable reals is co-countable. Recent efforts were devoted to understanding large degree spectra. Among the co-countable classes, Kalimullin \cite{Kalimullin-low,Kalimullin-ce,Kalimullin-restrict} investigated complements of lower cones: relativisations of the Slaman-Wehner theorem to nonzero degrees. He showed that such a relativisation holds to any low and any c.e.\ degree, but not to some $\Delta^0_3$ degree. Goncharov, Harizanov, Knight, McCoy, Miller and Solomon \cite{GHKMMS} showed that for any computable ordinal $\alpha$, the collection of non-$\low_\alpha$ degrees -- those degrees $\dega$ for which $\dega^{(\alpha)}>\cero^{(\alpha)}$ -- is a degree spectrum. Among classes which are not co-countable, we can appeal to the notions of category and measure to obtain notions of largeness, namely being co-meagre or being co-null. For example, Csima and Kalimullin \cite{CsimaKalimullin} showed that the collection of hyperimmune degrees is a degree spectrum. This is a collection which is both co-null and co-meagre, but is not co-countable. 

The largeness notions given by category and measure are not compatible: there is a meagre co-null class, and also a null co-meagre class. In Section \ref{sec_separating} we show that this incompatibility is reflected in degree spectra; namely that there is a null and co-meagre degree spectrum (Theorem \ref{thm_null_co_meagre}), and a meagre and co-null spectrum (Theorem \ref{thm_meagre_co_null}). 

It is not difficult to see that there can be only countably many co-null or co-meagre degree spectra. In fact, Kalimullin and Nies, and independently the authors \cite{GMS} showed that any co-null degree spectrum must include the Turing degree of Kleene's $\OO$, the complete $\Pi^1_1$ set. One then wonders if this bound can be improved to be hyperarithmetic. Our main theorem shows, in a strong way, that it cannot. 

\begin{theorem} \label{main_theorem}
There is a structure whose degree spectrum is the set of all non-hyperarithmetic degrees.
\end{theorem}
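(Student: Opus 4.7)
The plan is to mimic, at the hyperarithmetic level, the proof of Theorem~\ref{thm_Slaman_Wehner}. Using Hirschfeldt's coding from \cite{Hir06}, the construction of a linear ordering with a prescribed spectrum reduces to constructing a family $\FF$ of subsets of $\w$ which can be uniformly enumerated from a set $X$ exactly when $X$ belongs to the desired class. So the task becomes: produce $\FF$ such that (i) every non-hyperarithmetic $X$ uniformly enumerates $\FF$, while (ii) no hyperarithmetic set enumerates $\FF$.

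I would construct $\FF$ as a disjoint union $\bigcup_{(a,e)\in\w^2} \FF_{a,e}$. Intending $a$ as a notation in $\OO$ and $e$ as a Turing functional index, each $\FF_{a,e}$ is a Wehner-style subfamily consisting of sets of the form $\{(a,e)\}\oplus F$ with $F\subseteq\w$ finite, chosen so that any $X$ with $X\neq \Phi_e^{H_a}$ can uniformly enumerate $\FF_{a,e}$ by exploiting bits where $X$ disagrees with $\Phi_e^{H_a}$, while a set equal to $\Phi_e^{H_a}$ is defeated by a diagonal argument analogous to Wehner's. Direction (ii) is then immediate: if $X \leqt H_a$ with $a\in\OO$, then $X=\Phi_e^{H_a}$ for some $e$, so $X$ fails to enumerate $\FF_{a,e}$. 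Direction (i) uses that a non-hyperarithmetic $X$ satisfies $X\neq \Phi_e^{H_a}$ for every $a\in\OO$ and every $e$, so that the local Wehner procedure succeeds at each such pair and, run in parallel, yields an enumeration of $\FF$.

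The main obstacle is the non-uniformity of the hyperarithmetic hierarchy. Since $\OO$ is $\Pi^1_1$-complete, neither $X$ nor any set enumerable from $X$ can decide membership in $\OO$, and the map $a\mapsto H_a$ is not uniformly computable from any fixed oracle below $\OO$; the enumeration procedure must therefore proceed on every $a\in\w$ indiscriminately. This forces the definition of $\FF_{a,e}$ to be arranged so that, for $a\notin\OO$, the candidate sets emitted by $X$ are automatically members of $\FF$ --- for instance by letting $\FF_{a,e}$ contain every $\{(a,e)\}\oplus F$ whenever $a\notin\OO$, so that spurious enumerations are harmless. In effect one designs a single uniform procedure which, for every $a\in\w$, either certifies $a\notin\OO$ by the very pattern of its output, or witnesses $X\neq\Phi_e^{H_a}$ when $a\in\OO$, without being able to distinguish the two cases in advance. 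Arranging a family $\FF$ simultaneously rich enough for (i) and discriminating enough for (ii), in the face of this obstruction, is the principal technical content of the proof, and I expect it to consume most of the work; the conversion of $\FF$ into a linear ordering via Hirschfeldt's coding should then be essentially a black box.
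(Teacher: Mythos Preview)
Your outline has a genuine gap, and it is exactly at the point you flag as ``the principal technical content'' but then leave unspecified.  The relativised Wehner construction does not give you a family that an arbitrary $X\ne\Phi_e^{H_a}$ can enumerate; it gives a family that $X$ \emph{together with} $H_a$ can enumerate (see Theorem~\ref{thm_relativised_Wehner} and Proposition~\ref{prop_uniform_Wehner_family}: the operator $V$ takes both $Y$ and $X$ as inputs).  A non-hyperarithmetic set need not compute $H_a$ for any $a\in\OO$ --- there are non-hyperarithmetic sets that do not even compute $\emptyset'$ --- so your uniform procedure has no way to carry out the comparison ``$X$ versus $\Phi_e^{H_a}$'' at all, let alone uniformly in $a$.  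Declaring $\FF_{a,e}$ to contain every finite set when $a\notin\OO$ does nothing for this; the difficulty is entirely on the $a\in\OO$ side.  (It also introduces a secondary problem: an enumeration of $\FF$ must list \emph{all} of $\FF_{a,e}$ for $a\notin\OO$ while omitting the forbidden sets for $a\in\OO$, and no $\Sigma^1_1$-definable behaviour of your procedure can make that split.)

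The paper's proof avoids this obstacle by two moves you are missing.  First, it replaces the index set $\OO$ by a single computable Harrison-type linear ordering $\delta^*$ whose well-founded part is $\omck$; this is a computable set of indices, so there is no $\Pi^1_1$ membership problem.  Second --- and this is the key idea --- it does not ask $X$ to run the Wehner comparison against $\emptyset_{(2\alpha)}$ directly.  Instead it asks $X_{(2\alpha)}$ to do it, which is unproblematic since $X_{(2\alpha)}\ge_\Tur\emptyset_{(2\alpha)}$ always, and then applies a structural $\alpha$-jump inversion (Theorem~\ref{thm_jump_inversion}) to pull the resulting graph $G_{\emptyset_{(2\alpha)}}$ down to an $X$-computable structure $\NN_\alpha$.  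The overspill payoff is that for nonstandard $\alpha\in\delta^*\setminus\omck$ the jump-inverted structures $G^{-\alpha}$ all collapse to a single computable isomorphism type $G^{-\infty}$, independent of $G$; so the uniform procedure automatically produces the right component there, without needing to know which $\alpha$ are standard.  Your proposal contains neither the jump-inversion mechanism nor the overspill device, and I do not see how to make a purely family-based approach work without them.
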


This result has implications for higher degree structures. In \cite{GMS} it is shown that the Slaman-Wehner theorem fails for the degrees of constructibility: under standard mild set-theoretic assumptions, if for any non-constructible set $X\subseteq \w$, a countable structure $\MM$ has a copy constructible in $X$, then $\MM$ has a constructible copy. Theorem \ref{main_theorem}, though, shows that the analogue of the Slaman-Whener theorem does hold in the hyperdegrees. In other words, there is a structure which is $\Delta^1_1(X)$-presentable in every nonhyperarithmetic set $X$, but has no hyperarithmetic copy; so the hyperspectrum of this structure consists of the nonzero hyperdegrees. 

Two lines of continued research present themselves. Further restrictions on definability arise from looking at degree spectra of structures in particular classes. It is unknown, for example, if the Slaman-Wehner theorem can be witnessed by a linear ordering. We show (Theorem \ref{thm_LO}) that Theorem \ref{main_theorem} can: there is a countable linear ordering whose degree spectrum is the collection of non-hyperarithmetic degrees. On the other hand, Theorem \ref{main_theorem} cannot be witnessed by a countable model of an uncountably categorical structure. The argument is simple: if the degree spectrum of $\MM$ contains all non-hyperarithmetic degrees, then the theory of $\MM$ is hyperarithmetic; because the theory of $\MM$ is hyperarithmetic in any copy of $\MM$, and there is a minimal pair of hyperdegrees. Khisamiev \cite{Khisamiev} and Harrington \cite{Harrington_prime}, however, showed that if $T$ is a countable, uncountable categorical theory, then every countable model of $T$ has a copy computable in $T$. We wonder if the countable structures which witness Theorem \ref{main_theorem} fall on one side of some standard watershed of the stability spectrum. 

We also ask which other complements of countable ideals of Turing degrees are degree spectra. There are only countably many such ideals. Is the ideal of arithmetic degrees a complement of a degree spectrum? Kalimullin's original problem -- for which degrees $\dega$ is there a structure $\MM$ whose degree spectrum is the collection of degrees $\degb \nle \dega$ -- remains open. There are only countably many such degrees, but we do not know any upper bound on them. Are they all hyperarithmetic? The next natural test case is finding whether $\cero''$ is such a degree. A hazardous guess, based on a proof that all c.e.\ degrees are such degrees, asks if these degrees coincide with the Turing degrees which contain ranked sets.

\section{Relative to any non-hyperarithmetic degree}

In this section we prove Theorem \ref{main_theorem}. 

\subsection{Discussion}

Goncharov, Harizanov, Knight, MacCoy, R. Miller and Solomon \cite{GHKMMS} showed that for any computable ordinal $\alpha$, the collection of non-$\low_\alpha$-degrees, namely those degrees $\degd$ such that $\degd^{(\alpha)}>\cero^{(\alpha)}$, is the degree spectrum of a structure~$\MM_\alpha$. We observe that a degree $\degd$ is hyperarithmetic if and only if it is $\low_\alpha$ for some computable ordinal $\alpha$: certainly every $\low_\alpha$-degree is computable in $\cero^{(\alpha)}$, and so is hyperarithmetic; and for all computable $\alpha$, $\cero^{(\alpha)}$ is $\low_{\alpha\cdot \w}$, as 
\[ \left(\cero^{(\alpha)}\right)^{(\alpha\cdot \w)} = \cero^{(\alpha+\alpha\cdot \w)} = \cero^{(\alpha\cdot \w)}.\]
It follows that a degree $\degd$ is non-hyperarithmetic if and only if it computes a copy of $\MM_\alpha$ for every computable $\alpha$, and in fact, an examination of the proof shows that such a copy is computable uniformly from $\degd$ and $\alpha$. In other words, there is a Turing functional $\Phi$ such that for any non-hyperarithmetic arithmetic set $D$ and every notation $a$ for an ordinal $\alpha$ in some $\Pi^1_1$ path in Kleene's $\OO$, $\Phi(D,a)$ is a copy of $\MM_\alpha$. 

It would then seem natural to try to prove Theorem \ref{main_theorem} by examining the disjoint union $\MM$ of all the structures $\MM_\alpha$. Certainly any degree computing a copy of $\MM$ must be non-hyperarithmetic, and a non-hyperarithmetic degree can compute every component of $\MM$. However, a non-hyperarithmetic degree which does not compute Kleene's $\OO$ will not know to apply $\Phi$ only to notations of computable ordinals. It will therefore fail to compute a copy of $\MM$. 

To overcome this problem, we examine what happens when $\Phi$ is applied to notations for nonstandard ordinals. Restricting to a computable set of such notations, which is obtained from an overspill argument, overcomes the problem of working with the $\Pi^1_1$-set of notations. Essentially, we show that the structure $\Phi(D,a)$ for notations $a$ for a nonstandard ordinal is in fact computable, and does not depend on $D$ or on $a$. In some sense it is the ``ill-founded limit'' of the structures $\MM_\alpha$. Adding this limit as a component to $\MM$ results in the structure we are seeking.

\subsection{The proof}

The proof of Theorem \ref{main_theorem} relies on three ingredients: nonstandard ordinals, the Wehner graph, and jump inversion for graphs.

\subsubsection{Nonstandard ordinals}

H.~Friedman (see \cite[III,1.9]{Sacks}) used the Gandy hyperlow basis theorem to construct a countable $\w$-model of set theory which omits the least non-computable ordinal $\omck$. We fix such a model $H$. So $H$ is a model of $\ZFC$ (by which we mean, of course, that $H$ is a model of a finite fragment of $\ZFC+V=L$, sufficiently strong for our purposes). The well-founded part of $H$ extends $L_{\omck}$ but has height $\omck$. There are non-well-ordered computable orderings of $\w$ which have no infinite descending sequences in $H$. These are the order-types of non-standard computable ordinals of $H$. 

For all $Z\in 2^\w\cap H$, and every $H$-notation $\alpha\in \OO^H$, the $\alpha\tth$ iteration $Z^{(\alpha)}$ of the Turing jump of $Z$ is a well-defined element of $2^\w\cap H$. This is compatible with the standard definition: for a standard notation $\alpha$, $Z^{(\alpha)}$ as computed in $H$ agrees with the value computed in $V$. This follows by induction on $\alpha$, or simply by absoluteness of $\mathbf{\Delta}^1_1(\ZFC)$ facts between $H$ and $V$. 

We fix an ill-founded computable ordinal $\delta^*$ of $H$. We identify $\delta^*$ with an element of $\OO^H$, the collection of $a\in \Nat$ which $H$ believes are notations for computable ordinals. From this notation, which we also call $\delta^*$, we can obtain a computable linear ordering of $\w$ which is isomorphic to $\delta^*$; we call this linear ordering $\delta^*$ as well. An ordering such as $\delta^*$, which is ill-founded but has no infinite descending hyperarithmetic sequences, was first constructed by Harrison~\cite{Har68}, who also showed that his ordering supports a jump hierarchy. The maximal well-founded initial segment of~$\delta^*$ is $\omck$. We choose $\delta^*$ so that in $H$, it is a limit ordinal which is closed under ordinal addition and multiplication; and we choose the linear ordering $\delta^*$ so that the operations of ordinal addition and multiplication on the left by $\w$ are computable on $\delta^*$. This can be achieved, for example, by taking $\delta^*$ to be a power of a power of $\w$. 


\subsubsection{The relativised Wehner graph}

A relativisation of Wehner's proof of the Slaman-Wehner theorem (\ref{thm_Slaman_Wehner}) yields, for any set $X$ of natural numbers, a (simple undirected) graph $G_X$ whose degree spectrum does not contain $\deg_\Tur(X)$ but does contain the degrees strictly above $\deg_\Tur(X)$. The construction yields a \emph{total} operator: a Turing functional $\Phi$ such that for all oracles $X$, $\Phi(X)\in 2^\w$ is total. 

\begin{theorem}\label{thm_relativised_Wehner}
	There is a total Turing functional $\Phi$ such that for all $X$ and $Y$ in $2^\w$, $\Phi(Y,X)$ is a graph, and $\Phi(Y,X)\cong G_X$ if and only if $Y \nle_\Tur X$. 
\end{theorem}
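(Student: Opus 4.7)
The plan is to present a uniformly relativised, total version of Wehner's original construction. First, define the $X$-relativised Wehner family
\[
\mathcal{F}^X = \bigl\{ \{n\} \oplus F : n \in \w, \, F \subseteq \w \text{ finite}, \, F \ne W_n^X \bigr\}.
\]
A direct relativisation of Wehner's argument shows that $\mathcal{F}^X$ admits a $Y$-computable enumeration if and only if $Y \nle_\Tur X$. Let $G_X$ be obtained from $\mathcal{F}^X$ by a standard rigid gadget coding: each element of $\mathcal{F}^X$ is represented by a finite graph-gadget whose isomorphism type determines it uniquely, and each gadget is included with infinite multiplicity. The coding is uniform in $X$; the isomorphism type of $G_X$ determines $\mathcal{F}^X$; and $Y$-computable copies of $G_X$ correspond precisely to $Y$-computable enumerations of $\mathcal{F}^X$ with infinite repetition.

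To make the construction a total functional of $(Y, X)$, I would phrase Wehner's enumeration procedure as a parallel family of $\Sigma^0_1(Y, X)$ searches, one for each triple $(n, F, k)$ with $k$ a multiplicity index. Each search seeks a Wehner-style witness to $F \ne W_n^X$ that exploits the non-$X$-computability of $Y$. The functional $\Phi$ is defined stage by stage: at stage $s$, whenever the $(n, F, k)$-search halts by stage $s$, $\Phi(Y, X)$ emits a fresh copy of the gadget for $(n, F)$ on previously unused vertices of $\w$. Regardless of which searches ever halt, the output is the characteristic function of an edge relation on $\w$, so $\Phi(Y, X)$ is always a graph and $\Phi$ is total.

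For the equivalence, if $Y \nle_\Tur X$ then Wehner's argument ensures that the $(n, F, k)$-search halts for infinitely many $k$ when $F \ne W_n^X$ and never halts when $F = W_n^X$; the family of emitted gadgets therefore realises $\mathcal{F}^X$ with infinite multiplicity, and $\Phi(Y, X) \isom G_X$. Conversely, if $Y \leq_\Tur X$, the enumerated family cannot coincide with $\mathcal{F}^X$, since otherwise $X$ itself would enumerate $\mathcal{F}^X$, contradicting Wehner's impossibility result; either some required pair is omitted or a forbidden pair $(n, W_n^X)$ is emitted, and the rigidity of the gadget coding then forces $\Phi(Y, X) \not\cong G_X$.

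The main obstacle is calibrating the Wehner-style test so that a single uniform procedure works for both directions simultaneously: the test must succeed exactly on the pairs $(n, F)$ with $F \ne W_n^X$ whenever $Y \nle_\Tur X$, and must fail in a manner visible to the graph isomorphism type when $Y \leq_\Tur X$. This is achieved by the same diagonal construction as in Wehner's unrelativised proof, once all enumerations and decisions are phrased relative to $X$; the essential bookkeeping is to arrange the searches and gadget emissions so that they are uniformly carried out in $(Y, X)$, which guarantees totality.
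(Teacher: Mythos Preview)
Your high-level plan matches the paper's: define the relativised Wehner family $\FF_X$, show it has no $X$-computable enumeration, build a uniform enumeration from any $Y \nle_\Tur X$, and pass to graphs via a flower/bouquet coding. The ``only if'' direction is also fine: if $Y \le_\Tur X$ then $\Phi(Y,X)$ is $X$-computable, so it cannot be a copy of $G_X$.

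The gap is in your description of the enumeration mechanism. You phrase it as a family of $\Sigma^0_1(Y,X)$ searches, one per triple $(n,F,k)$, which halt exactly when $F \ne W_n^X$ and upon halting emit a complete gadget for $(n,F)$. But $F \ne W_n^X$ is a $\Pi^0_1(X)$ condition; there is no uniform $\Sigma^0_1(Y,X)$ test for it, and the bare hypothesis $Y \nle_\Tur X$ gives no leverage here. So the search-then-emit mechanism as you describe it cannot work.

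Wehner's actual trick (and the paper's) is dynamic rather than search-then-emit. For each index $n = (e,u,s_0)$ one immediately begins enumerating $\{e\}\oplus D_u$; at each later stage, if the current set happens to coincide with $\{e\}\oplus W^X_{e,s}$, one throws in the next fresh element of $Y\oplus\overline{Y}$. If $Y \nle_\Tur X$ then $Y\oplus\overline{Y}$ is not c.e.\ in $X$, so this can fire only finitely often and the $n$th column stabilises to some $\{e\}\oplus F$ with $F \ne W_e^X$. Crucially, the final $F$ is not fixed in advance: it is the limit of the process, not the input to a search. On the graph side this means each flower is grown petal by petal, which is why the bouquet graph carries infinitely many isolated vertices as raw material. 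Once you replace the halting-search picture with this grow-and-adjust picture, your outline becomes exactly the paper's proof.
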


\subsubsection{Jump inversion for graphs} \label{subsubsec_jump_inversion}

The structures $\MM_\alpha$ mentioned above were obtained in \cite{GHKMMS} by inverting the Wehner graph $G_{\emptyset^{(\alpha)}}$. Replacing the edges of the graph by pairs of linear orderings built from $\Int^\alpha$, the authors show an $\alpha$-jump inversion for graphs: an operation taking a graph $G$ to a structure $\NN$ such that for all $X\in 2^\w$, $X$ computes a copy of $\NN$ if and only if $X^{(\alpha)}$ computes a copy of $G$. 

We show that this jump inversion can be stretched to the nonstandard ordinals, in which case it produces a uniform computable structure. To state the following theorem in a uniform fashion for both finite and infinite computable ordinals, we need to shift the infinite indices by 1 (as is done in \cite{AK00}). We discuss this in greater length in Subsection \ref{subsubsec_corrected}. Let $Z\in 2^\w$. For $n<\w$, we let $Z_{(n)} = Z^{(n)}$. For $\alpha\in [\w,\omck)$, we let $Z_{(\alpha)} = Z^{(\alpha+1)}$. If $Z\in H$, then for all $\alpha\in (\omck)^H\setminus \omck$, we also let $Z_{(\alpha)}= Z^{(\alpha+1)}$. 

\begin{theorem}\label{thm_jump_inversion}
	For any graph $G$ and $\alpha<\delta^*$ there is a structure $G^{-\alpha}$ (in a fixed language $\LL$) with the following properties:
	\begin{enumerate}
		\item For $\alpha<\omck$, for any $X\in 2^\w$, if $X$ computes a copy of $G^{-\alpha}$, then $X_{(2\alpha)}$ computes a copy of $G$.\footnote{In fact, a set $X\in 2^\w$ computes a copy of $G^{-\alpha}$ if and only if $X_{(2\alpha)}$ computes a copy of $G$. This follows from a relativisation of a lemma below; but this equivalence is not necessary for the proof of Theorem \ref{main_theorem}.}
		\item For $\alpha,\beta \in \delta^*\setminus \omck$, and any graphs $G$ and $H$, $G^{-\alpha}\cong H^{-\beta}$.
	\end{enumerate}
	Moreover, for any total Turing functional $\Phi$ there is a Turing functional $\Psi$ such that for all $X\in 2^\w$, $\alpha< \delta^*$ and all graphs $G$, if $\Phi(X,\emptyset_{(2\alpha)})\cong G$ then $\Psi(X,\alpha)\cong G^{-\alpha}$. 
\end{theorem}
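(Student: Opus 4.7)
The plan is to extend the $\alpha$-jump inversion construction of Goncharov, Harizanov, Knight, McCoy, Miller and Solomon \cite{GHKMMS} from standard notations in $\OO$ to arbitrary notations in $\delta^*$, exploiting the computability of ordinal addition and left-$\omega$-multiplication on the chosen presentation of $\delta^*$. Their construction builds $G^{-\alpha}$ by a transfinite recursion that, at each stage, replaces the edges of an auxiliary graph by pairs of computable linear orderings of order type $\Int^\alpha$; a copy of the resulting structure encodes $G$ via an iteration of the Turing jump of length $\alpha$. Because the recursion is arithmetic in the computable operations on the notation $\alpha<\delta^*$, it can be packaged into a single Turing functional $\Psi$ taking $X$ and $\alpha$ as input; by using the true-stage machinery of \cite{AK00}, $\Psi(X,\alpha)$ produces $G^{-\alpha}$ whenever $\Phi(X,\emptyset_{(2\alpha)})$ is a copy of $G$, without needing to compute $\emptyset_{(2\alpha)}$ outright. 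This gives the ``Moreover'' clause, and clause (1) follows immediately from the standard analysis of the construction applied at notations $\alpha<\omck$.

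The substantive content is clause (2), the absorption phenomenon at nonstandard $\alpha$. I plan to prove it by constructing an isomorphism $G^{-\alpha}\cong H^{-\beta}$ through a back-and-forth argument carried out in $V$, relying on the external facts that the well-founded part of $\delta^*$ is exactly $\omck$ and that $\delta^*$ contains no hyperarithmetic infinite descending sequence. The first step is to treat the ``brick'' linear orderings of nonstandard length $\Int^\alpha$ and $\Int^\beta$: they share a well-founded initial piece coming from $\omck$, while their tails above $\omck$ are Harrison-type orderings with identical hyperarithmetic structure. A back-and-forth in $V$ produces the isomorphism stage by stage, at each standard stage $\gamma<\omck$ extending the current finite partial isomorphism; extensions are always possible because any obstruction would be witnessed by a hyperarithmetic descending sequence in the exponent, which does not exist in $\delta^*$.

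The second step lifts this to the full graph structures. The strategy is to show that at each standard stage $\gamma<\omck$, the Ash--Knight $\gamma$-back-and-forth relation on $G^{-\alpha}$ collapses to equality of atomic types, so that a greedy extension of partial isomorphisms into $H^{-\beta}$ suffices. Around each vertex, the nonstandard tail of the edge-replacing linear orderings provides enough room for the back-and-forth to match any local configuration, independently of whether the underlying edge relation comes from $G$ or from $H$. The main obstacle I anticipate is the bookkeeping for this lifted back-and-forth: one must verify that compatibility of partial isomorphisms at level $\gamma$ can be refined to compatibility at level $\gamma+1$ simultaneously across all replaced edges, using only facts about $\delta^*$ that are absolute between $H$ and $V$. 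Tracking how a finite matching of vertices pins down only finitely many Harrison-tail parameters on each side should allow those parameters to be realised on the opposing side by the homogeneity of the nonstandard tail, closing the inductive step and delivering the desired isomorphism.
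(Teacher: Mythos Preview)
Your plan diverges from the paper's, and the divergence matters most for clause~(2).

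The paper does \emph{not} carry the $\Int^\alpha$ linear orderings of \cite{GHKMMS} through to nonstandard $\alpha$. Instead it builds, for each $\alpha<\delta^*$, an explicit pair of ``fat'' trees $\SSS_{\alpha,0}=(T_{\omega\alpha},T_{\omega\alpha+1})$ and $\SSS_{\alpha,1}=(T_{\omega\alpha+1},T_{\omega\alpha})$, and defines $G^{-\alpha}$ by replacing each edge of $G$ with a copy of $\SSS_{\alpha,1}$ and each non-edge with a copy of $\SSS_{\alpha,0}$. The payoff is that any two \emph{ill-founded} fat trees are isomorphic by a two-line back-and-forth. Hence for nonstandard $\alpha$ one has $T_{\omega\alpha}\cong T_{\omega\alpha+1}\cong T_\infty$, so $\SSS_{\alpha,0}\cong\SSS_{\alpha,1}\cong(T_\infty,T_\infty)$ independently of $\alpha$. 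Clause~(2) is then immediate: once the edge-block and the non-edge-block coincide, $G^{-\alpha}$ depends only on $|V(G)|=\omega$, not on $G$ or on $\alpha$. No global back-and-forth on $G^{-\alpha}$ is needed at all.

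Your two-step plan misses exactly this reduction. Your ``first step'' argues that $\Int^\alpha$- and $\Int^\beta$-bricks are isomorphic across different nonstandard $\alpha,\beta$; but what clause~(2) really needs is that, at a \emph{single} nonstandard level, the edge-coding pair is isomorphic to the non-edge-coding pair. Once that holds, the ``lift'' to the whole graph structure is a triviality (any bijection of vertex sets extends), not the bookkeeping problem you anticipate. The invocation of Ash--Knight $\gamma$-back-and-forth relations ``collapsing to equality of atomic types'' is off target: that machinery classifies $\Sigma_\gamma$-elementary equivalence, it is not how one manufactures the concrete isomorphism you want here.

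For the ``Moreover'' clause the paper again works directly: it defines uniformly computable trees $T^\beta_\gamma(m)$ by effective transfinite recursion along $\delta^*$ (carried out inside the ill-founded model $H$), computes their ranks exactly, and reads off a single functional $\Psi$ valid for all $X$ and all $\alpha<\delta^*$. The paper does remark that Ash's metatheorem plus overspill would also yield the result, so your plan is not wrong in principle; but ``package the recursion via the true-stage machinery of \cite{AK00}'' is a promissory note rather than a proof, and you should note that when the paper later (for the linear-ordering version in \S3) does invoke Ash's theorem, it explicitly \emph{cannot} make the overspill uniform in $X$.
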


The isomorphism type of $G^{-\alpha}$ depends only on $\alpha$ and the isomorphism type of~$G$. We fix a structure $G^{-\infty}$ such that for any graph $H$ and any $\alpha\in \delta^*\setminus \omck$, $G^{-\infty}\cong H^{-\alpha}$. We will see that $G^{-\infty}$ has a computable copy.

\subsubsection{The proof of Theorem \ref{main_theorem}}

Before we present the proofs of Theorems \ref{thm_relativised_Wehner} and \ref{thm_jump_inversion}, we show 
how to use them to prove Theorem \ref{main_theorem}. The structure $\AAA$ whose degree spectrum is the 
collection of non-hyperarithmetic degrees is the disjoint union of $G_{\emptyset_{(2\alpha)}}^{-\alpha}$ for 
$\alpha<\delta^*$. In other words, $\AAA$ is the disjoint union of $G_{\emptyset_{(2\alpha)}}^{-\alpha}$ for 
$\alpha<\omck$ and infinitely many copies of $G^{-\infty}$. Formally, the universe of $\AAA$ is the disjoint 
union of the universes of the structures $\NN_\alpha = G_{\emptyset_{(2\alpha)}}^{-\alpha}$ for $\alpha<\delta^*$, 
with an added equivalence relation whose equivalence classes are the various $\NN_\alpha$'s.

We show that a set $X$ computes a copy of $\AAA$ if and only if it is not hyperarithmetic. 

For $\alpha<\omck$, if a set $X$ computes a copy of $\NN_\alpha$ then $X_{(2\alpha)}$ computes a copy of $G_{\emptyset_{(2\alpha)}}$, whence $X_{(2\alpha)} >_\Tur \emptyset_{(2\alpha)}$. As we argued above, if~$X$ is hyperarithmetic, then there is some computable ordinal $\beta$ such that for all computable ordinals $\gamma\ge \beta$, $X_{(\gamma)} \equiv_\Tur \emptyset_{(\gamma)}$. It follows that if $X$ is hyperarithmetic, then there are computable ordinals $\gamma$ such that $X$ does not compute a copy of $\NN_\gamma$, and so $X$ does not compute~$\AAA$. 

On the other hand, if $X$ is not hyperarithmetic, then for all $\alpha<\omck$, $X\nle_\Tur \emptyset_{(2\alpha)}$, and so $\Phi(X,\emptyset_{(2\alpha)})\cong G_{\emptyset_{(2\alpha)}}$, where $\Phi$ is the functional guaranteed by Theorem \ref{thm_relativised_Wehner}. If $\Psi$ is the functional obtained from $\Phi$ by Theorem \ref{thm_jump_inversion}, then it follows that $\Psi(X,\alpha)\cong \NN_\alpha$. Certainly for $\alpha\in \delta^*\setminus \omck$, we have $\Psi(X,\alpha)\cong G^{-\infty}$. Hence the disjoint union of the structures $\Psi(X,\alpha)$ for $\alpha<\delta^*$ is an $X$-computable structure isomorphic to $\AAA$.

\medskip

This completes the proof of Theorem \ref{main_theorem}. We can think of this proof as follows. The component $\NN_\alpha$ of $\AAA$ can be thought of as the result of diagonalising $\AAA$ against the $\alpha\tth$ hyperarithmetic structure. As we shall shortly see, the graph $G_{\emptyset_{(2\alpha)}}$ is obtained by aggregating all possible ways that a finite initial segment of a set $X\nle_\Tur \emptyset_{(2\alpha)}$ codes into the graph the fact that it enumerates a set that $\emptyset_{(2\alpha)}$ cannot enumerate. This process ensures that the result does not depend on $X$. The jump-inversion operation then translates this information, coded directly into the atomic diagram of $G_{\emptyset_{(2\alpha)}}$, into the $\Sigma^0_{2\alpha}$-diagram of $\NN_\alpha$. %
This translation involves a certain amount of ``overwriting''; the raw information present in the graph $G_{\emptyset_{(2\alpha)}}$ is homogenised, or diluted, so that only an iteration of the jump of length $2\alpha$, and no shorter, can recover it. If $\alpha$ is nonstandard, then this dilution process completely overwrites all the information in the graph produced by $X$ as it attempts to diagonalise against $\emptyset_{(2\alpha)}$. We have no control over this graph, as it is possible that $X$ is nonhyperarithmetic yet is computable from $\emptyset_{(2\alpha)}$. Overwriting all the information coded in this graph ensures that we produce $\NN_\alpha \cong G^{-\infty}$, which again doesn't depend on $X$. 

Note that in our construction, it is unimportant that $\AAA$ is  an unlabelled disjoint union of the components $\NN_\alpha$; we could have also built a labelled disjoint union, in which the component $\NN_\alpha$ is defined by a unary predicate indexed by $\alpha$. This shows that it is not important that for nonstandard $\alpha$, $G^{-\alpha}$ does not depend on $\alpha$; what is important is that it does not depend on $G$. 
\

\subsection{The Wehner graph}

We prove Theorem \ref{thm_relativised_Wehner}. As we mentioned above, this is a relativisation of Wehner's proof of Theorem \ref{thm_Slaman_Wehner}. 

For a set $X\in 2^\w$, consider the following family of finite sets:
\[
\FF_X  = \left\{ \{e\}\oplus F\,:\, e< \om, F\subseteq \om\text{ is finite, and } F\neq W_e^X\right\}.
\]
Recall that for a set $A\subseteq \w^2$, for all $n<\w$ we let $A^{[n]} = \left\{x  \,:\, (n,x)\in A \right\}$. For any countable collection $\FF$ of subsets of $\w$, we say that a set $A\subseteq \om^2$ is an {\em enumeration} of $\FF$ if $\FF=\{A^{[n]}:n<\om\}$. We say that a set $Y$ can {\em enumerate} $\F$ is there is an enumeration of $\F$ that is $Y$-c.e.

\begin{proposition} \label{prop_Wehner_family_diagonalizes}
No set $X$ can enumerate $\FF_X$.
\end{proposition}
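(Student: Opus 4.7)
The plan is to prove this by contradiction, combining the $s$-$m$-$n$ theorem with the recursion theorem relative to $X$. Suppose, toward a contradiction, that there is an $X$-c.e.\ set $A\subseteq\om^2$ enumerating $\FF_X$, i.e.\ $\FF_X=\{A^{[n]}:n<\om\}$; fix one particular stagewise $X$-computable enumeration of $A$ once and for all. The eventual contradiction will be produced by using the recursion theorem to manufacture an index $e^*$ whose associated c.e.\ set is both \emph{forced} to equal, and \emph{forbidden} from equalling, the odd half of a specific column of $A$.

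I would first observe that for every $e$ at least one column of $A$ contains the even number $2e$: there are infinitely many finite subsets of $\om$ but at most one can equal $W_e^X$, so some finite $F$ has $F\neq W_e^X$, and then $\{e\}\oplus F\in\FF_X$ is a column of $A$ containing $2e$. Let $n_0(e)$ denote the column index of the \emph{first} pair of the form $(n,2e)$ to enter $A$ under the fixed enumeration. Since every element of $\FF_X$ has a unique even entry, $A^{[n_0(e)]}$ must have the form $\{e\}\oplus F_{n_0(e)}$ for some finite $F_{n_0(e)}$, and by the definition of $\FF_X$ we have $F_{n_0(e)}\neq W_e^X$.

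Next I would apply the $s$-$m$-$n$ theorem to produce a total computable function $g$ such that $W_{g(e)}^X$ is computed by the following ``commit-and-enumerate'' procedure: run the fixed stagewise enumeration of $A$, and on first seeing a pair $(n,2e)\in A$ irrevocably set $n_0:=n$; thereafter, enumerate $x$ into $W_{g(e)}^X$ whenever a pair $(n_0,2x+1)$ enters $A$. Because elements are enumerated only from the single committed column $n_0=n_0(e)$, the resulting set is precisely the odd half of $A^{[n_0(e)]}$, which is $F_{n_0(e)}$.

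The recursion theorem relative to $X$ now supplies an index $e^*$ with $W_{e^*}^X=W_{g(e^*)}^X=F_{n_0(e^*)}$. But $A^{[n_0(e^*)]}=\{e^*\}\oplus F_{n_0(e^*)}$ is a column of $A$, hence an element of $\FF_X$, which forces $F_{n_0(e^*)}\neq W_{e^*}^X$, directly contradicting the equality just obtained. The only bookkeeping step to check is that the commit-and-enumerate strategy is uniformly $X$-computable in $e$, so that $s$-$m$-$n$ genuinely yields a total computable index function $g$ to which the relativised recursion theorem applies; once that is verified the contradiction is immediate.
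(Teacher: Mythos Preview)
Your proof is correct and follows essentially the same approach as the paper: the paper's argument is the terse one-liner that an enumeration of $\FF_X$ would let $X$ compute a function $f$ with $W_{f(e)}^X$ a finite set different from $W_e^X$, contradicting the recursion theorem, and you have simply spelled out the details of how to build that $f$ (your $g$) via the commit-and-enumerate procedure and the $s$-$m$-$n$ theorem.
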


\begin{proof}
If $X$ were able to enumerate $\FF_X$, then $X$ could compute a function $f$ such that for all $e$, $W_{f(e)}^X$ is a finite set different than $W_e^X$.  This would contradict the recursion theorem.
\end{proof}

\begin{proposition} \label{prop_uniform_Wehner_family}
There is a c.e.\ operator $V$ such that for all $X$ and $Y\nle_\Tur X$, $V(Y,X)$ is an enumeration of $\FF_X$.
\end{proposition}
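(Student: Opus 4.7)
The plan is to adapt Wehner's construction of the analogous operator for $\F_\emptyset$ by running the entire construction with $X$ as an oracle throughout. The hypothesis $Y\nle_\Tur X$ is needed because $\F_X$ is $\Sigma^0_2(X)$-enumerable but not $\Sigma^0_1(X)$-enumerable: the condition $F\ne W_e^X$ has a $\Sigma^0_1(X)$ part (there is some $k\in W_e^X\setminus F$) and a $\Pi^0_1(X)$ part (some $f\in F$ never enters $W_e^X$). Proposition \ref{prop_Wehner_family_diagonalizes} precisely says that $X$ alone cannot enumerate $\F_X$. The role of $Y$ is to certify the $\Pi^0_1(X)$ half.

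Concretely, for each $e\in\w$, each finite $F\subseteq\w$, and each $\sigma\in 2^{<\w}$, I reserve a column $C(e,F,\sigma)$ of $V(Y,X)$. The intended behaviour is that $C(e,F,\sigma)$ settles at $\{e\}\oplus F$ when $\sigma$ is a true initial segment of $Y$ and $F\ne W_e^X$, and otherwise is \emph{repaired} so as to equal $\{e\}\oplus F'$ for some finite $F'\ne W_e^X$. At stage $s$, for each active triple $(e,F,\sigma)$ with $|\sigma|\le s$, the operator checks whether $\sigma$ is still consistent with $Y_s\upharpoonright|\sigma|$ and whether $W^X_{e,s}\ne F$ as finite sets. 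If both hold, it enumerates the next element of $\{e\}\oplus F$ into $C(e,F,\sigma)$, at a rate slow enough that the whole of $\{e\}\oplus F$ would be emitted only in the limit. If $\sigma$ is later refuted by $Y$, or if $W^X_{e,s}=F$ is observed, the operator begins a repair phase, enumerating a large fresh element $k$ into the $F$-part of the column, chosen to exceed every number so far seen in $W^X_{e,s}$ or the construction, and iterating as necessary until the target stabilises at some $\{e\}\oplus F'$.

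Verification has two halves. Every column lies in $\F_X$: if $\sigma$ is a true initial segment of $Y$ and $F\ne W_e^X$, the stopping rule $W^X_{e,s}=F$ never fires and $C(e,F,\sigma)$ settles at $\{e\}\oplus F\in\F_X$; if $\sigma$ is refuted by $Y$, or if $F=W_e^X$, the repair phase forces extra elements into $C(e,F,\sigma)$ that eventually lie outside $W_e^X$, so the column lands in $\F_X$. Every $\{e\}\oplus F\in\F_X$ is realised: choose $\sigma\subset Y$ long enough that by stage $|\sigma|$ either some $k\in W^X_e\setminus F$ has been enumerated, or $W^X_e$ has stabilised strictly inside $F$; the slow-enumeration rate, together with the indexing by such a long $\sigma$, prevents the stopping rule from firing before the column is committed, and $C(e,F,\sigma)$ settles at $\{e\}\oplus F$.

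The main obstacle is bounding the repair loop: a freshly chosen $k$ might later be enumerated into $W_e^X$, prompting further repair. The standard Wehner-style argument handles this by noting that if some $(e,F)$ with $F\ne W_e^X$ required infinitely many repairs on every column $C(e,F,\sigma)$, then the schedule of repairs, which depends only on $X$ and on fixed combinatorial data, would yield an $X$-computable procedure deciding $Y$, contradicting $Y\nle_\Tur X$. The delicate point is arranging the use of $Y$-bits inside the repair phase so that this contradiction can actually be extracted; this uses the fact that any $Y\nle_\Tur X$ fails to agree with every $X$-computable $\{0,1\}$-valued function. Assembling these ingredients yields the desired uniformly c.e.\ operator $V$.
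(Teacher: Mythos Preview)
Your overall shape is right---a Wehner-style repair mechanism, relativised to $X$---but the proof has a real gap exactly where you flag it as ``delicate''. As you describe it, the repair step enumerates ``a large fresh element $k$ \dots chosen to exceed every number so far seen in $W_{e,s}^X$''; this choice is computable from $X$ alone and makes no use of $Y$. Fix any true $\sigma\subset Y$: the consistency check then never fails, so the evolution of $C(e,F,\sigma)$ is an $X$-recursive function of~$e$. By the recursion theorem relative to $X$ there is an index $e$ for which $W_e^X$ enumerates precisely the repair elements produced on $C(e,\emptyset,\sigma)$, one per stage; for this $e$ the column matches $\{e\}\oplus W_{e,s}^X$ at every stage, the repair loop is infinite, and the column equals $\{e\}\oplus W_e^X\notin\F_X$. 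Nothing about $Y\nle_\Tur X$ obstructs this, because $Y$ never entered the repair. Your closing sentence acknowledges that $Y$-bits must be used \emph{inside} the repair phase, but the construction you actually wrote down does not do so, and the $\sigma$-indexing does not compensate. (Incidentally, ``slow enumeration'' of $\{e\}\oplus F$ is not a meaningful device here: the set is finite.)

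The missing idea, which the paper supplies, is to let the repair element itself encode a bit of $Y$. When the column currently equals $\{e\}\oplus W_{e,s}^X$, enumerate into it the least element of $Y\oplus\overline{Y}$ not already present. If some column were repaired infinitely often it would equal $\{e\}\oplus W_e^X$ on the one hand and $\{e\}\oplus\bigl(F\cup(Y\oplus\overline{Y})\bigr)$ on the other, so $Y\oplus\overline{Y}$ would be c.e.\ in $X$ and hence $Y\le_\Tur X$---the desired contradiction. With this repair rule the $\sigma$-indexing becomes superfluous: the paper simply indexes columns by triples $(e,u,s_0)$, sets the column to $\{e\}\oplus D_u$ all at once at stage $s_0$, and observes that for each $\{e\}\oplus F\in\F_X$ there is an $s_0$ beyond which $F\ne W_{e,s}^X$ for all $s\ge s_0$, so that column is never touched again.
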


\begin{proof}
Let $X,Y\in 2^\w$. 
For each $n<\w$ we enumerate $V(Y,X)^{[n]}$ as follows.
Suppose that $n= (e,u,s_0)$.
We start by letting 
\[ V(Y,X)_{s_0}^{[n]} = \{e\}\oplus D_u = \{2e\}\cup \{2x+1: x\in D_u\};\] here $D_u$ is the $u\tth$ finite set of natural numbers. 
At every stage $s>s_0$, if we see that $V(Y,X)_s^{[n]}=\{e\}\oplus W_{e,s}^X$, then we enumerate $2x+1$ into $V(Y,X)_{s+1}^{[n]}$, where $x$ is the least element of $Y\oplus \overline Y$ which is not in $V(Y,X)_s^{[n]}$. Here $\overline Y = \w\setminus Y$. 

Let $\{e\}\oplus F$ be an element of $\FF_X$. There is a stage $s_0$ such that for all $s\ge s_0$, $F\ne W_{e,s}^X$. If $F=D_u$ then $V(Y,X)^{[n]} = \{e\}\oplus F$ where $n = (e,u,s_0)$, as we set $V(Y,X)_{s_0}^{[n]} = \{e\}\oplus F$ and never enumerate other elements into $V(Y,X)^{[n]}$ at any later stage. So $\FF_X\subseteq \{V(Y,X))^{[n]}:n<\om\}$.

Suppose that $Y\nle_\Tur X$.  Pick any $n=(e,u,s_0)$; we claim that $V(Y,X)^{[n]}$ is finite which, by the construction, implies that $V(Y,X)^{[n]}\in \FF_X$. The reason $V(Y,X)^{[n]}$ it is finite is that $Y\oplus \overline Y$ is not c.e.\ in $X$. If there were infinitely many stages at which new elements were enumerated into $V(Y,X)^{[n]}$, then we would end up with 
\[ \{e\} \oplus W_e^X \,=\, V(Y,X)^{[n]} \,=\, \{e\} \oplus (D_u \cup (Y\oplus \overline Y)),\]
whence $Y\oplus \overline Y$ would be c.e.\ in $X$, yielding $Y\le_\Tur X$. 
\end{proof}

Now, following Knight (see \cite{AK00}) and Khoussainov \cite{Bakh_dasies}, we code families of sets in graphs. 

\begin{definition}
Given a set $F\subseteq\om$, we define the \emph{flower graph} $G(F)$ of $F$ as follows:
We start with a vertex $v$, and for each $n\in F$ we add a cycle of length $n+3$ starting and ending in $v$.

Given a family of sets $\FF$, we define the {\em bouquet graph $G(\FF)$} of $\FF$ to be the disjoint union of infinitely many copies of $G(F)$ for each $F\in \FF$, together with infinitely many isolated vertices.
\end{definition}

\begin{lemma}[\cite{AK00,Bakh_dasies}] \label{lem_flower_grpahs_and_enumerations}
For any countable collection $\FF$ of subsets of $\w$, a set $Y$ can compute a copy of $G(\FF)$ if and only if $Y$ can enumerate $\FF$. 

This equivalence is uniform: there is a total Turing functional $\Lambda$ such that for any set $Z$ and any index $e$, if $\FF_e^Z$ is the family enumerated by $W_e^Z$, then $\Lambda(Z,e)$ is a presentation of $G\left( \FF_e^Z\right)$. 
\end{lemma}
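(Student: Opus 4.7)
The plan is to prove the two directions of the equivalence separately. The uniformity claimed in the ``moreover'' clause lives on the ``enumeration implies computable graph'' side, which is also the direction actually used in the proof of Theorem~\ref{thm_relativised_Wehner}.

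For that direction, suppose $\FF = \{A^{[n]} : n<\w\}$ for some $Y$-c.e.\ set $A \subseteq \w^2$; say $A = W_e^Y$. I build a $Y$-computable graph in stages, on universe $\w$ split by a computable allocation into three reserved infinite subsets: one ``center'' vertex $v_{n,k}$ for each pair $(n,k)\in\w^2$ (to serve as the center of the $k\tth$ copy of the flower from the $n\tth$ column of $A$), an infinite pool of unused ``cycle'' vertices, and infinitely many isolated vertices. Whenever a new element $m$ is enumerated into $A^{[n]}$, I allocate $m+2$ fresh cycle vertices from the pool and use them together with $v_{n,k}$ to close a simple cycle of length $m+3$, doing this, interleaved, for each~$k$. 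Edges are only ever added; each non-center vertex is given both of its neighbours at its allocation stage; hence the adjacency relation is computable from $Y$. By construction, each $F \in \FF$ yields infinitely many copies of $G(F)$ (one for each $(n,k)$ with $A^{[n]} = F$), and the reserved isolated vertices provide the required infinite set of isolated vertices, so the graph is isomorphic to $G(\FF)$. Since the construction depends only on~$e$, relativising to an arbitrary oracle~$Z$ yields the total functional $\Lambda(Z,e)$ of the ``moreover'' clause.

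For the converse, suppose $Y$ computes a graph $G \cong G(\FF)$. For each vertex $v$ of~$G$, let $F_v = \{n : v \text{ lies on a simple cycle of length } n+3 \text{ in } G\}$; this is $Y$-c.e.\ uniformly in $v$, since a witnessing simple cycle is a finite subgraph of the $Y$-computable $G$. Because two distinct basic cycles of a flower meet only at the center, the only simple cycles in $G(F)$ are its basic cycles, and consequently the center of each copy of $G(F)$ inside $G$ satisfies $F_v = F$. Thus every $F\in\FF$ appears as some $F_v$, giving a $Y$-c.e.\ family containing $\FF$. The standard treatment in \cite{AK00,Bakh_dasies} accounts for the potentially spurious columns (singletons from non-center vertices of flowers with at least two petals, and $\emptyset$ from isolated vertices) by a minor enrichment of the coding that makes the center of each flower $Y$-effectively recognisable, thereby producing a $Y$-c.e.\ enumeration of $\FF$ exactly.

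The main obstacle is this isolation of flower centers in the converse direction; the forward direction and its uniformity are by contrast a routine bookkeeping construction.
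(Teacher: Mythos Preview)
The paper does not prove this lemma; it is stated with a citation to \cite{AK00,Bakh_dasies}, so there is nothing in the paper to compare against directly.

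Your forward direction (including the uniformity) is the standard construction and is fine; the only point worth making explicit is that fresh cycle vertices are chosen with indices large enough that the adjacency of any given pair is decided by a computable stage.

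Your converse direction has a gap. You correctly identify the obstacle --- non-center vertices of multi-petal flowers yield spurious singletons, and isolated vertices yield $\emptyset$ --- but then you defer to an ``enrichment of the coding that makes the center \dots\ recognisable''. That is not a proof of the lemma as stated, which is about $G(\FF)$ exactly as defined here, and no modification of the coding is needed. For the singletons: if $v$ lies on a multi-petal flower, its center $w$ is the unique vertex of degree greater than $2$ in $v$'s component, and having degree greater than $2$ is a $\Sigma^0_1(Y)$ event. So one begins enumerating $F_v$ and, upon discovering such a $w$ on the cycle through $v$, also enumerates $F_w$ into the same column; since the one petal already contributed is itself a petal of $w$, the column becomes exactly $F_w\in\FF$ rather than a spurious singleton. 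Single-petal flowers (pure cycles) already give the correct singleton at every vertex. The only remaining spurious value is $\emptyset$, coming from the isolated vertices; this is unavoidable on the graph side since $G(\FF)\cong G(\FF\cup\{\emptyset\})$, but one checks directly that for nonempty $\FF$, $Y$ can enumerate $\FF$ if and only if $Y$ can enumerate $\FF\cup\{\emptyset\}$, so nothing is lost.
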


For $X\in 2^\w$, we let the \emph{Wehner graph} $G_X$ of $X$ be the bouquet graph $G\left(\FF_X \right)$ of the Wehner family $\FF_X$. Theorem \ref{thm_relativised_Wehner} now follows immediately from Propositions \ref{prop_Wehner_family_diagonalizes} and \ref{prop_uniform_Wehner_family}, and Lemma \ref{lem_flower_grpahs_and_enumerations}.

\

\subsection{Jump inversion for structures}

We set out to prove Theorem \ref{thm_jump_inversion}. The theorem actually follows from an application of an overspill argument to Ash's metatheorem (see \cite{AK00}); the relevant work here is by Ash and Knight, on pairs of structures \cite{AK90}. This work was used in \cite{GHKMMS} to invert the $\alpha$-jump for standard computable ordinals $\alpha$. Ash's theorem has a complicated proof, and its full power is not required to prove jump-inversion for structures. This is why we give a complete proof. The construction we present has its roots in work of Hirschfeldt and White \cite{HW02}. However, Hirschfeldt and White do not give sharp bounds for the pairs of trees they construct, whereas proving Theorem \ref{thm_jump_inversion} does require these sharp bounds. Possibly our construction is not new, but we have not been able to find it in the literature. 

\

The main part of the argument proving Theorem \ref{thm_jump_inversion} is the construction of uniformly computable structures $\SSS_{\alpha,0}$ and $\SSS_{\alpha,1}$, for $\alpha<\delta^*$, which for standard $\alpha < \omck$ discern $\Delta^0_{2\alpha+1}$ predicates, and for nonstandard $\alpha\in \delta^*\setminus \omck$ have the same isomorphism type. This is the content of Propositions \ref{prop_jump_inversion_part_two}, \ref{prop_jump_inversion_part_one}, and \ref{prop_jump_inversion_part_three}. Here discerning $\Delta^0_{2\alpha+1}$ predicates means that the problem of detecting isomorphisms to either $\SSS_{\alpha,0}$ or $\SSS_{\alpha,1}$ is $\Delta^0_{2\alpha+1}$-complete. That is, given a structure $\NN$, $\emptyset^{(2\alpha+1)}$ can determine if $\NN$ is isomorphic to $\SSS_{\alpha,0}$ or $\SSS_{\alpha,1}$; and for any $\Delta^0_{2\alpha+1}$ set $X$, there is a uniformly computable sequence of structures $\seq{\NN_n}_{n<\w}$ such that if $n\in X$, then $\NN_n\cong \SSS_{\alpha,1}$, and if $n\notin X$, then $\NN_n\cong \SSS_{\alpha,0}$. 

\medskip

The building blocks of the structures $\SSS_{\alpha,0}$ and $\SSS_{\alpha,1}$ are fat trees, which we now define.

\

\subsubsection{Fat trees} \label{subsec_fat_trees}

We work with nonempty countable rooted trees of height at most~$\w$. Usually, trees of finite height are defined to be partial orderings $(T,<_T)$ for which for all $y\in T$, the collection of predecessors $\{x\in T\,:\, x<_T y\}$ is linearly ordered and finite; \emph{rooted} means that $T$ has a $<_T$-least element, named $\rooot(T)$. However, such a presentation of a tree $T$ does not allow us to effectively compute the parent (the immediate predecessor) of a non-root element of $T$. Further, under this definition, a homomorphism $f\colon T\to S$ of trees does not need to take immediate successors to immediate successors, or the root to the root. To overcome this, when we consider them as structures, we add the parent function which maps every non-root element of $T$ to its parent, and the root to itself. Note that the ordering $<_T$ is $\LL_{\w_1,\w}$-definable from the parent function, and so we may omit it when specifying a tree. 

Each such tree is isomorphic to a downward closed subset of $\w^{<\w}$ (the collection of all finite strings of natural numbers), with the ordering given by extension; in other words, the parent unary function is interpreted as the function which chops off the last bit of the string. However, it will be useful to use the general notion; in particular, we will use subsets of $\w^{<\w}$ which do not necessarily contain the empty string~$\seq{}$, but which are trees under the ordering of string extension. 

\medskip

Let $T\in H$ be a tree, and suppose that $H$ believes that $T$ is well-founded. That is, $H$ does not contain an infinite path of $T$. Then $H$ contains a unique \emph{rank function} for $T$: a function $r$ from $T$ to the countable ordinals of $H$, such that for all $x\in T$,  $r(x)$ is the least upper bound (in $H$) of $r(y)+1$, for $y>_T x$. We let $\rk_T$ be this unique rank function, and let $\rk(T) = \rk_T(\rooot(T))$.\footnote{Note that this notation differs by 1 from the standard set-theoretic definition, which lets $\rk(T) = \rk(\rooot(T))+1$.} The range of $\rk_T$ contains every $\alpha<\rk(T)$, in other words, $\range \rk_T = \{ \beta\,:\, \beta\le \rk(T)\} = \rk(T)+1$.

For any tree $T$ and $x\in T$, let $T_x = \{ y\in T\,:\, y\ge_T x\}$ with the partial ordering restricted from $<_T$; so $x = \rooot(T_x)$. We have $\rk_{T_x} = \rk_T \rest{T_x}$, so $\rk(T_x) = \rk_T(x)$. We also let $\hht_T(X)$, the \emph{height} of $x$ on $T$, be the size of the set $\{y\in T\,:\, y<_T x\}$. The height $\hht(T)$ of a tree $T$ is the supremum of the height of its elements. If $T$ has finite height, then $\rk(T)= \hht(T)$. If the height of $T$ is $\w$, then $\rk(T)$ is infinite. For any ranked tree $T$, for all $k\le \hht(T)$, $\rk(T) = \sup \{\rk_T(x)+k\,:\, \hht_T(x)=k\}$.  

If $T$ is a hyperarithmetic well-founded tree, then the rank function of $T$ is hyperarithmetic (and the rank of $T$ is a computable ordinal). It follows that the rank function of $T$ (as computed in $V$) is an element of $H$; by $\Delta^1_1$ absoluteness between $H$ and $V$, we see that $\rk_T$ is the ``true'' rank function of $T$, and that $\rk(T)$ is computed correctly in $H$. In fact, for a hyperarithmetic tree $T$ which $H$ believes is well-founded, $\rk(T)<\omck$ if and only if $T$ is well-founded, because if $\rk(T)<\omck$, then the well-foundedness of $\rk(T)$ shows that $T$ is well-founded. 

\begin{definition}\label{def_fat_tree}
	Let $T$ be a tree which $H$ believes is well-founded. The tree $T$ is \emph{fat} if for all $x\in T$, for all $\gamma < \rk_T(x)$, there are infinitely many children (immediate successors) $y$ of $x$ on $T$ such that $\rk_T(y) = \gamma$. 
\end{definition}

If $T$ is truly well-founded, then the fatness of $T$ does not depend on the choice of $H$. The utility of fat trees is that they are universal for their rank. 

\begin{proposition}\label{prop_fat_trees_universal}
	Let $S$ and $T$ be trees which $H$ believes are well-founded. If $\rk(S)\le \rk(T)$, and $T$ is fat, then $S$ is embeddable into $T$. If both $T$ and $S$ are fat and $\rk(S) = \rk(T)$, then $S$ and $T$ are isomorphic. 
	
	Furthermore, if both $T$ and $S$ are fat and are ill-founded, then they are isomorphic, regardless of their nonstandard rank. 
\end{proposition}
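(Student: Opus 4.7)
\emph{Proof plan.} I would prove the first two (``$H$-internal'') claims by transfinite induction inside $H$ on the relevant ranks, and the ill-founded case by an external back-and-forth, which will be the main obstacle.

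For the embedding claim, I induct in $H$ on $\rk(S)$: since $H \models \ZFC$ this induction is legitimate there, and any embedding produced as an element of $H$ is, since $H$ is an $\omega$-model, a genuine embedding in $V$. At the inductive step, I map $\rooot(S) \mapsto \rooot(T)$; for each child $u$ of $\rooot(S)$, the inequality $\rk_S(u) < \rk(S) \leq \rk(T)$ combined with fatness of $T$ supplies infinitely many children $v$ of $\rooot(T)$ with $\rk_T(v) = \rk_S(u)$, so I pick distinct such $v$'s for the countably many $u$'s and glue the embeddings $S_u \hookrightarrow T_v$ given by the inductive hypothesis. For the second claim (equal-rank fat trees are isomorphic), the same style of induction in $H$ works: with $\rho = \rk(T) = \rk(S)$, fatness makes the children of $\rooot(T)$ and of $\rooot(S)$ of each rank $\gamma < \rho$ into countably infinite sets, and a rank-preserving bijection between them, together with the inductive hypothesis on each matched pair, glues into the required isomorphism.

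For the ill-founded case I cannot induct on rank, since the ranks are nonstandard $H$-ordinals without well-founded recursion available in $V$; instead I build the isomorphism by back-and-forth over $\omega$ stages in $V$. Call a vertex $x$ of \emph{standard type} if $T_x$ is genuinely well-founded (equivalently $\rk_T(x) < \omck$), and of \emph{nonstandard type} otherwise. The key structural fact is that any nonstandard-type vertex $x$ has, for each $\gamma < \omck$, infinitely many children of standard type $\gamma$ (immediate from fatness, as $\gamma < \rk_T(x)$), and also infinitely many children of nonstandard type. The second part uses that $H$ omits $\omck$: if every $H$-ordinal below $\rk_T(x)$ were standard, then in $V$ the set $\rk_T(x)$ would consist of exactly the computable ordinals, namely $\omck$; but $\omck \notin H$, contradicting $\rk_T(x) \in H$. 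So some nonstandard $\beta < \rk_T(x)$ exists in $H$, and fatness supplies infinitely many children of $x$ of rank $\beta$, each ill-founded in $V$. Given this, I maintain a finite partial map $f \colon T \to S$ preserving parents and types, with the invariant that whenever $(x, y)$ is a matched pair of standard type, $f$ restricts to a full isomorphism $T_x \cong S_y$ supplied by the second claim on the well-founded fat trees $T_x, S_y$. At each stage, to add the next unmatched element $u$ of $T$ (symmetrically for $S$), I walk from its nearest matched ancestor $x_0$ (which by the invariant has nonstandard type) along the parent chain $x_0, x_1, \ldots, x_k = u$, matching each $x_{i+1}$ with an unused child of $f(x_i)$ of the same type as $x_{i+1}$; the supply of such unused children is infinite by the structural fact while only finitely many have ever been used. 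After $\omega$ stages, $f$ is a total tree isomorphism.
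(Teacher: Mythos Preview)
Your proof is correct in its essential ideas, though two imprecisions should be tightened.

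First, in the back-and-forth for the ill-founded case, when you match a standard-type child $x_{i+1}$ you must match it with an unused child of $f(x_i)$ having the \emph{same standard rank}, not merely ``the same type''; otherwise the second claim, which you invoke to supply the isomorphism $T_{x_{i+1}} \cong S_{f(x_{i+1})}$, does not apply. Fatness of $S$ together with $\rk_S(f(x_i))$ being nonstandard (hence exceeding every computable ordinal) guarantees infinitely many such children, so the fix is immediate. Second, your justification that a nonstandard-type vertex has nonstandard-type children is right in conclusion but slightly garbled in phrasing: the clean statement is that the well-founded part of the $H$-ordinals is exactly $\omck$, so below any nonstandard $H$-ordinal lies another nonstandard one --- otherwise its set of $<_H$-predecessors would be genuinely well-ordered, making it standard after all.

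As to the comparison with the paper: the paper proves all three parts by a single back-and-forth carried out in $V$, maintaining throughout the invariant that matched elements agree on whether their rank is standard, and if so on its exact value --- precisely the invariant you eventually adopt in your third part. Your route instead disposes of the first two parts by transfinite induction \emph{inside} $H$, exploiting that any embedding or isomorphism $H$ constructs is genuine since $H$ is an $\omega$-model; you then feed the second part as a black box into the external back-and-forth for the ill-founded case. Your approach is more modular and makes the absoluteness step explicit; the paper's unified argument is terser and avoids invoking the well-founded case as a lemma, at the cost of running the back-and-forth even where a direct recursion would do.
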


\begin{proof}
	An isomorphism $f\colon S\to T$ is built by a back-and-forth argument, building an isomorphism from the roots up. Inductively, we map an element $x\in S$  to $f(x)\in T$ such that $\rk_S(x)< \omck$ if and only if $\rk_T(f(x))< \omck$, and if so, then these ranks are equal. Fatness, together with the fact that $\omck$ is the well-founded part of the ordinals of $H$, shows that the induction can always continue. 
	
	The embedding is similarly built, by a forth-only argument. 
\end{proof}

Given any $\alpha<\delta^*$, we can effectively obtain a computable (index for a) fat tree of rank $\alpha$. This is done using the \emph{fattening} operation on trees. For any tree $T$, let $\fat(T)$ be the subset of $\w^{<\w}$ which consists of the (nonempty) sequences of the form $\seq{\rooot(T),(x_1,n_1),(x_2,n_2),\dots, (x_k,n_k)}$, where $k<\w$, $n_1,n_2,\dots, n_k\in \Nat$ and $\rooot(T)<_T x_1 <_T x_2 <_T \dots <_T x_k$. It is easy to see that the fattening operation induces a computable map on indices of computable trees. 

\begin{lemma}\label{lem_fattening_makes_fat}
	If $T$ is well-founded in $H$, then $\fat(T)$ is fat, and $\rk(\fat(T))= \rk(T)$. 
\end{lemma}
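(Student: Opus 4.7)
The plan is to show that the rank function on $\fat(T)$ is controlled precisely by the last $T$-coordinate of each node, after which fatness and the rank equality drop out. First I would verify that $\fat(T)$ is well-founded from $H$'s point of view: any infinite path $\sigma_0 \subsetneq \sigma_1 \subsetneq \cdots$ in $\fat(T)$ would, by construction, have its last-coordinate projection give an infinite chain $\rooot(T) <_T x_1 <_T x_2 <_T \cdots$ in $T$, contradicting the well-foundedness of $T$ in $H$. Consequently $\rk_{\fat(T)}$ is defined in $H$.

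The key step is the following claim, proved by transfinite induction on $\rk_T$: for every $\sigma = \langle \rooot(T),(x_1,n_1),\ldots,(x_k,n_k)\rangle \in \fat(T)$, the rank satisfies $\rk_{\fat(T)}(\sigma) = \rk_T(x_k)$, with the convention that for the root $\langle \rooot(T)\rangle$ the relevant $T$-node is $\rooot(T)$ itself. The base case is immediate: if $\rk_T(x_k)=0$ then $x_k$ is a $T$-leaf, so $\sigma$ has no extension in $\fat(T)$ and is therefore a leaf of rank $0$. For the inductive step, the immediate successors of $\sigma$ in $\fat(T)$ are exactly the sequences $\sigma^\frown\langle(y,m)\rangle$ with $y >_T x_k$ and $m\in\Nat$; by the induction hypothesis each such child has rank $\rk_T(y)$. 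Hence
\[
\rk_{\fat(T)}(\sigma)\,=\,\sup\{\rk_T(y)+1 \,:\, y>_T x_k\}.
\]
Since $\rk_T$ is strictly decreasing along $<_T$, every $y>_T x_k$ lies below an immediate $T$-child $z$ of $x_k$ with $\rk_T(y)\le \rk_T(z)$, so this supremum coincides with $\sup\{\rk_T(z)+1:z\text{ an immediate }T\text{-child of }x_k\}=\rk_T(x_k)$. Applied to the root $\langle\rooot(T)\rangle$, this gives $\rk(\fat(T))=\rk_T(\rooot(T))=\rk(T)$.

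For fatness, fix $\sigma\in\fat(T)$ with last $T$-coordinate $x$ and let $\gamma<\rk_T(x)=\rk_{\fat(T)}(\sigma)$. Using the fact recorded just before the lemma, that $\range\rk_{T_x}$ contains every ordinal $\le\rk_T(x)$, choose $y\in T_x$ with $\rk_T(y)=\gamma$; since $\gamma<\rk_T(x)$ we have $y\neq x$ and hence $y>_T x$. Then for every $m\in\Nat$ the child $\sigma^\frown\langle(y,m)\rangle$ of $\sigma$ in $\fat(T)$ has rank $\gamma$ by the claim above, producing infinitely many children of $\sigma$ of rank exactly $\gamma$. This verifies Definition \ref{def_fat_tree} at every node.

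The only subtle point, which I would address carefully, is the equality of suprema over all strict $T$-descendants versus over immediate $T$-children at each stage of the induction; everything else is routine bookkeeping about the shape of $\fat(T)$ and about the preservation of $\Delta^1_1$ facts (well-foundedness and rank) between $V$ and $H$ for hyperarithmetic trees.
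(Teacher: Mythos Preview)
Your proof is correct and follows essentially the same approach as the paper: identify the rank of a node of $\fat(T)$ with $\rk_T$ of its last $T$-coordinate, and deduce fatness from the surjectivity of $\rk_{T_x}$ onto $\rk_T(x)+1$. The paper simply asserts that this map is the rank function of $\fat(T)$, whereas you verify it by transfinite induction; these are the same content. One small remark: the ``subtle point'' you flag about suprema over all strict $T$-descendants versus immediate $T$-children is a non-issue here, since the paper's definition of $\rk_T(x)$ is already the supremum of $\rk_T(y)+1$ over \emph{all} $y>_T x$, so the equality you need is immediate.
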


\begin{proof}
	The function mapping $\seq{\rooot(T),(x_1,n_1),(x_2,n_2),\dots, (x_k,n_k)}$ to $\rk_T(x_k)$ (and $\rooot(\fat(T)) = \seq{\rooot(T)}$ to $\rk(T)$) is the rank function for $\fat(T)$. The fact that for $x\in T$, the range of $\rk_{T_x}$ is $\rk_T(x)+1$ shows that $\fat(T)$ is fat. 
\end{proof}

For $\alpha<\delta^*$, let $S_{\alpha}$ be the \emph{tree of descending sequences} from $\alpha$; 
\[ S_\alpha = \left\{\seq{\alpha_1,\alpha_2,\dots, \alpha_k} \,:\, \alpha>\alpha_1>\alpha_2 > \dots \alpha_k \right\};\]
the root of $S_{\alpha}$ is the empty sequence $\seq{}$. It is easy to check that for all nonempty 
$\seq{\alpha_1,\dots, \alpha_k}\in S_{\alpha}$, 
$\rk(\seq{\alpha_1,\dots, \alpha_k}) = \alpha_k$, and so that  
$\rk(S_\alpha) = \alpha$; this is because for all $\alpha< \delta^*$, $\alpha = \sup\{ \beta+1\,:\, \beta<\alpha\}$. We let $T_\alpha = \fat(S_\alpha)$; so $T_\alpha$ is a computable fat tree of rank $\alpha$, 
and a computable index for $T_\alpha$ is effectively obtained from $\alpha$. 

We let $T_\infty$ be a fat, ill-founded computable tree. So for all $\alpha\in \delta^*\setminus \omck$, $T_\alpha\cong T_\infty$. 

\medskip

\subsubsection{The adjusted hyperarithmetic hierarchy} \label{subsubsec_corrected}

Before we define the structures $\SSS_{\alpha,0}$ and $\SSS_{\alpha,1}$, we explain why we modified the iterated jump and used the sets $Z_{(\alpha)}$ rather than $Z^{(\alpha)}$ (Subsection \ref{subsubsec_jump_inversion}), that is, why we work with $Z^{(\alpha+1)}$ rather than $Z^{(\alpha)}$ if $\alpha$ is infinite, but not if it is finite. 

Recall the following definition of the hyperarithmetic hierarchy. For each $\alpha<\omck$, we fix an effective 
listing $\seq{W_{e,\alpha}}_{e<\w}$ of the $\Sigma^0_\alpha$ subsets of $\Nat$. We let $W_{e,1} = W_e$ be the 
$e\tth$ c.e.\ set (or we could start with $\alpha=0$, if we like, by listing all the primitive recursive sets, 
say). Given $W_{i,\beta}$ for all $i<\w$ and $\beta<\alpha$, we let $W_{e,\alpha}$ be the union of the sets 
$\overline{W_{i,\beta}}$, where $(i,\beta)\in W_e \cap (\w\times \alpha)$. 

Note that in fact, this definition can be pursued in $H$, as we did with ranks of $H$-well-founded trees; this gives us an effective listing, for each $\alpha<\delta^*$, of what $H$ defines as the $\Sigma^0_\alpha$ sets. These definitions are relativised naturally to any oracle $Z\in H$, and for $\alpha<\omck$, to any oracle $Z\in 2^\w$. 

Now there is a discrepancy between the finite and infinite levels of the hierarchy, in the relationship between $\Sigma^0_\alpha$ and $\emptyset^{(\alpha)}$. For nonzero $n<\w$, $\emptyset^{(n)}$ is $\Sigma^0_{n}$-complete (for many-one reductions), but for computable successor ordinals $\alpha\ge \w$, $\emptyset^{(\alpha)}$ is merely $\Sigma^0_{\alpha-1}$-complete. This is because for limit ordinals $\alpha$, $\Sigma^0_\alpha$ sets are effective unions of $\Pi^0_\beta$ sets for $\beta<\alpha$ unbounded in $\alpha$. Similarly, for $n<\w$, a set is $\Sigma^0_{n+1}$ if and only if it is c.e.\ in $\emptyset^{(n)}$; whereas for all $\alpha\ge \w$, a set is $\Sigma^{0}_{\alpha}$ if and only if it is c.e. in $\emptyset^{(\alpha)}$. 

We thus use the modification employed by Ash and Knight in \cite{AK00} to overcome this split between the finite and infinite case. For all $Z\in H\cap 2^\w$, for all $\alpha<\delta^*$, a set is $\Sigma^0_{\alpha+1}(Z)$ if and only if it is c.e.\ in $Z_{(\alpha)}$, and $Z_{(\alpha)}$ is $\Sigma^0_\alpha(Z)$-complete. The same holds for $\alpha<\omck$ for all $Z\in 2^\w$. 

All of these equivalences are uniform. For example, there is a c.e.\ operator $\Gamma$ such that for all $Z\in 2^\w$, all $\alpha<\omck$ and all $e<\w$, $\Gamma(Z_{(\alpha)},\alpha,e) = W_{e,\alpha+1}(Z)$.

\subsubsection{The structures $\SSS_{\alpha,0}$ and $\SSS_{\alpha,1}$}

We use the fat trees $T_\alpha$ to define the structures $\SSS_{\alpha,0}$ and $\SSS_{\alpha,1}$. Both will be \emph{pairs of trees}, or labelled disjoint unions of trees. For trees $S$ and $T$, the universe of the structure $(S,T)$ is the disjoint union of $S$ and $T$; the parent function is defined on both parts; and a unary predicate defines $S$ in the structure. 

\begin{definition}\label{def_alpha_complete_pair}
	For nonzero $\alpha< \delta^*$, let $\SSS_{\alpha,0} = (T_{\w\alpha}, T_{\w\alpha+1})$, and $\SSS_{\alpha,1} = (T_{\w\alpha+1}, T_{\w\alpha})$. 
\end{definition}

\begin{proposition} \label{prop_jump_inversion_part_two}
	For $\alpha,\beta\in \delta^*\setminus \omck$, $\SSS_{\alpha,0}\cong \SSS_{\beta,0}\cong \SSS_{\beta,1}$.
\end{proposition}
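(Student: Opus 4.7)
The plan is to show that for every $\gamma \in \delta^* \setminus \omck$, the tree $T_\gamma$ has a single isomorphism type, and then observe that each of the three structures in the proposition is (as a pair of trees) the pair consisting of two copies of this universal tree.

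First, I would verify that for $\alpha \in \delta^* \setminus \omck$, both $\w\alpha$ and $\w\alpha+1$ are nonstandard, i.e., lie in $\delta^* \setminus \omck$. This is immediate because $\alpha \le \w\alpha < \w\alpha+1$, so if $\w\alpha + 1 < \omck$ or $\w\alpha < \omck$ then $\alpha < \omck$, contradicting the assumption; and since $\delta^*$ is closed under multiplication by $\w$ on the left and under addition of finite ordinals, $\w\alpha$ and $\w\alpha+1$ are below $\delta^*$. Consequently, the trees $T_{\w\alpha}$ and $T_{\w\alpha+1}$ are defined and, by Lemma \ref{lem_fattening_makes_fat}, are fat trees of (nonstandard) $H$-ranks $\w\alpha$ and $\w\alpha+1$ respectively. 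In particular, neither is well-founded in $V$, so both are ill-founded.

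Next I apply the ``furthermore'' clause of Proposition \ref{prop_fat_trees_universal}: any two fat trees which are ill-founded in $V$ are isomorphic, regardless of their nonstandard $H$-rank. Hence for any $\alpha,\beta \in \delta^* \setminus \omck$, the four trees $T_{\w\alpha}, T_{\w\alpha+1}, T_{\w\beta}, T_{\w\beta+1}$ are all pairwise isomorphic; let $T_\infty$ denote their common isomorphism type.

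To conclude, recall that a pair of trees $(S,T)$ is the labelled disjoint union in which a distinguished unary predicate picks out the first component. An isomorphism $(S,T) \cong (S',T')$ is therefore given by a pair of tree isomorphisms $S \cong S'$ and $T \cong T'$. Applying the isomorphisms from the previous step componentwise yields
\[ \SSS_{\alpha,0} = (T_{\w\alpha}, T_{\w\alpha+1}) \;\cong\; (T_\infty, T_\infty) \;\cong\; (T_{\w\beta}, T_{\w\beta+1}) = \SSS_{\beta,0} \]
and
\[ \SSS_{\beta,1} = (T_{\w\beta+1}, T_{\w\beta}) \;\cong\; (T_\infty, T_\infty) \;\cong\; \SSS_{\beta,0}, \]
as required. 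There is no real obstacle here: the whole proposition is a direct application of the universality of fat ill-founded trees to the definition of $\SSS_{\alpha,i}$, and the only substantive point to check is that multiplying a nonstandard ordinal of $\delta^*$ by $\w$ (or adding $1$) keeps it nonstandard within $\delta^*$.
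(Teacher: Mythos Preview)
Your proof is correct and follows essentially the same approach as the paper: the paper's one-line proof simply observes that all three structures are isomorphic to $(T_\infty,T_\infty)$, having already noted (just after defining $T_\alpha$) that $T_\gamma\cong T_\infty$ for every nonstandard $\gamma\in\delta^*\setminus\omck$. Your argument unpacks exactly this, with the additional explicit verification that $\w\alpha$ and $\w\alpha+1$ remain in $\delta^*\setminus\omck$, which the paper leaves implicit.
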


\begin{proof}
	All three structures are isomorphic to $(T_\infty,T_\infty)$. 
\end{proof}

The following proposition states that the isomorphism problem for the pair $(\SSS_{\alpha,0},\SSS_{\alpha,1})$ is $\emptyset_{(2\alpha)}$-computable, uniformly in $\alpha$, in a relativisable way. Note that this problem is not trivial: for $\alpha<\omck$, $\SSS_{\alpha,0}\ncong \SSS_{\alpha,1}$, because $T_{\w\alpha}\ncong T_{\w\alpha+1}$. This relies on the fact that a standard ordinal cannot be embedded into a smaller ordinal. This shows that the isomorphism between say $\SSS_{\alpha,0}$ and $\SSS_{\alpha,1}$ for nonstandard $\alpha$ cannot belong to $H$. 

\medskip

Let $\seq{\Phi_e}$ be an effective sequence of all Turing functionals. 

\begin{proposition}\label{prop_jump_inversion_part_one}
	There is a Turing functional $\Theta$ such that for all $Z\in 2^\w$, for all nonzero $\alpha<\omck$, and all $e<\w$,
	\begin{itemize}
		\item If $\Phi_e(Z)\cong \SSS_{\alpha,0}$, then $\Theta(Z_{(2\alpha)},\alpha,e) = 0$; and 
		\item If $\Phi_e(Z)\cong \SSS_{\alpha,1}$, then $\Theta(Z_{(2\alpha)},\alpha,e) = 1$.
	\end{itemize}
\end{proposition}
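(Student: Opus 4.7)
The distinction between $\SSS_{\alpha,0}=(T_{\w\alpha},T_{\w\alpha+1})$ and $\SSS_{\alpha,1}=(T_{\w\alpha+1},T_{\w\alpha})$ is simply which side of the pair carries rank $\w\alpha+1$. Because $\w\alpha$ is a limit ordinal, fatness forces the root of $T_{\w\alpha}$ to have children only of ranks strictly below $\w\alpha$, whereas the root of $T_{\w\alpha+1}$ has infinitely many children of rank exactly $\w\alpha$. Writing $\Phi_e(Z)=(S,T)$ with $S$ marked by the unary predicate, we therefore have $\Phi_e(Z)\cong \SSS_{\alpha,1}$ if and only if the root of $S$ has some child $y$ with $\rk(y)\ge \w\alpha$.

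The plan centers on the claim that, uniformly in $\alpha<\omck$, $e<\w$, and a node $y$ of $\Phi_e(Z)$, the relation $R_\alpha(y)\equiv \rk(y)\ge \w\alpha$ is $\Pi^0_{2\alpha}(Z)$ and hence decided by $Z_{(2\alpha)}$ via the adjusted hierarchy of Subsection~\ref{subsubsec_corrected}. Granting this, $\Theta(Y,\alpha,e)$ with oracle $Y=Z_{(2\alpha)}$ enumerates in parallel the children $y$ of the roots of the two trees in $\Phi_e(Z)$ and, for each, queries $Y$ for $R_\alpha(y)$. Under the promise that $\Phi_e(Z)\cong \SSS_{\alpha,i}$, the $T_{\w\alpha+1}$-side yields infinitely many positive witnesses and the other side yields none, so the search terminates; output $1$ if the witness lies in $S$, $0$ if in $T$.

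The complexity claim is proved by transfinite induction on $\alpha<\omck$. For $\alpha=1$, $R_1(y)$ says that $y$ has descendants of every finite depth, a $\Pi^0_2(Z)$ condition. For a successor $\alpha=\beta+1$, expanding $\w(\beta+1)=\w\beta+\w$ yields
\[
R_\alpha(y)\;\Longleftrightarrow\;\forall n\,\exists y_1<y_2<\cdots<y_n\text{ above }y\text{ with }R_\beta(y_n),
\]
which is $\forall\exists$ over the inductive $\Pi^0_{2\beta}(Z)$ matrix, hence $\Pi^0_{2\beta+2}(Z)=\Pi^0_{2\alpha}(Z)$. For a limit $\alpha$, $R_\alpha(y)\Leftrightarrow \forall \beta<\alpha\,R_\beta(y)$; the complement is an effective union (indexed by a computable fundamental sequence of $\alpha$ in $\delta^*$) of $\Sigma^0_{2\beta}(Z)$ sets, which falls into $\Sigma^0_{2\alpha}(Z)$ at the limit-level clause of the hyperarithmetic hierarchy, so $R_\alpha\in\Pi^0_{2\alpha}(Z)$.

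The main obstacle is the limit stage of the induction, where one must simultaneously track uniformity in a computable notation for $\alpha$ and verify that the jump level $Z_{(2\alpha)}$ matches the complexity class $\Pi^0_{2\alpha}(Z)$ across the finite/infinite boundary. This is facilitated by the conventions of Subsection~\ref{subsubsec_corrected}, which arrange for $Z_{(\gamma)}$ to be $\Sigma^0_\gamma(Z)$-complete throughout $\delta^*$, together with the computability of left multiplication by $\w$ (and hence by $2$) on $\delta^*$, ensuring the required index manipulations remain effective.
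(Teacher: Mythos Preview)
Your proof is correct and follows essentially the same approach as the paper's: both analyse, by effective transfinite recursion on $\alpha$, the complexity of the rank predicate (the paper phrases it as ``$\rk(T)<\omega\alpha$'' on tree indices and shows it is $\Sigma^0_{2\alpha}(Z)$, you phrase the complement ``$\rk(y)\ge\omega\alpha$'' on nodes and show it is $\Pi^0_{2\alpha}(Z)$), and then use this to detect which component of the pair has the larger rank. The only cosmetic differences are that the paper separates out the auxiliary levels $\omega\alpha+n$ and runs a co-enumeration race on $R_{\omega\alpha+1}$, whereas you fold the successor step into a single chain characterization and search over children with decidable $Z_{(2\alpha)}$-queries; these are equivalent formulations of the same argument.
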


\begin{proof}
	We analyse the complexity, for $\alpha<\omck$, of the class of well-founded trees whose rank is bounded by $\alpha$. We need to work in relativised form. For any $Z\in 2^\w$ and $\alpha<\omck$, let $R_\alpha(Z)$ be the collection of indices $e<\w$ such that $\Phi_e(Z)$ is total and is a tree whose rank is smaller than $\alpha$. 
	
	We show that for all computable $\alpha\ge 1$ and $Z\in 2^\w$:
	\begin{enumerate}
		\item $R_{\w\alpha}(Z)\le_\Tur Z_{(2\alpha)}$; and
		\item for all $n<\w$, $R_{\w\alpha+n}(Z)$ is co-c.e.\ in $Z_{(2\alpha)}$.
	\end{enumerate}
	These are proved simultaneously by effective transfinite recursion on $\omck$ (that is, on the well-founded initial segment of the computable well-ordering $\delta^*$). That is, by recursion on $\alpha<\omck$, we define Turing functionals which given $Z$, $\alpha$ and $n$, produce a $Z_{(2\alpha)}$-index for $R_{\w\alpha}(Z)$ (an index $e$ such that $\Phi_e(Z_{(2\alpha)}) = R_{\w\alpha}(Z)$), and a $\Pi^0_{2\alpha+1}(Z)$-index for $R_{\w\alpha+n}(Z)$ (an index $e$ such that ${W_{e,2\alpha+1}(Z)} = \w\setminus R_{\w\alpha+n}(Z)$), recalling that a set is co-c.e.\ in $Z_{(2\alpha)}$ if and only if it is $\Pi^0_{2\alpha+1}(Z)$. 
	
	Since a tree has a finite rank if and only if it has finite height, $R_\w(Z)$ is $Z_{(2)} = Z''$-computable, as the condition that $\Phi_e(Z)$ is total is $\Pi^0_2(Z)$, and the condition that $\Phi_e(Z)$ has finite height is $\Sigma^0_2(Z)$. For $\alpha>1$, given (2) for $\beta<\alpha$, we in fact see that $R_{\w\alpha}(Z)$ is $\Sigma^0_{2\alpha}$, because for any tree $T$, $\rk(T)<\w\alpha$ if and only if there is some $\beta<\alpha$ and $n<\w$ such that $\rk(T)< \w\beta+n$. We then use the fact that $Z_{(2\alpha)}$ is $\Sigma^0_{2\alpha}$-complete to see that $Z_{(2\alpha)}$ computes $R_{\w\alpha}(Z)$. 
	
	Now for any $\alpha\ge 1$, we see that for any $n<\w$, $R_{\w\alpha+n}(Z)$ is co-c.e.\ in $R_{\w\alpha}(Z)$, and then use (1). This is because for any tree $T$, $\rk(T)<\w\alpha+n$ if and only if for every $x\in T$ of height $n$, $\rk(T_x)< \w\alpha$. 
	
	\medskip
	
	Now we define $\Theta$ as follows. Given $Z\in 2^\w$, $\alpha<\delta^*$ and $e<\w$, if $\Phi_e(Z) = (S,T)$, then we run the co-enumeration of $R_{\w\alpha+1}(Z)$ with oracle $Z_{(2\alpha)}$. We let $\Theta(Z_{(2\alpha)},\alpha,e) = 1$ if we first notice that $\rk(S)>\w\alpha$, and output 0 if we first notice that $\rk(T)> \w\alpha$. Note, of course, that given $Z_{(2\alpha)}$ and $\alpha$, we can effectively find~$Z$; and that from a $Z$-index for $(S,T)$ we can effectively find $Z$-indices for $S$ and for~$T$. 
\end{proof}

\subsubsection{Hardness of the isomorphism problem}

We want to code any $\Delta^0_{2\alpha+1}$ set into an isomorphism problem for the pair $\SSS_{\alpha,0}$ and $\SSS_{\alpha,1}$. We start with a recursive definition of the sets $\emptyset_{(\alpha)}$ for $\alpha<\delta^*$. 

\

Recall that we also consider $\delta^*$ as a notation in $\OO^H$. This means that for limit $\alpha<\delta^*$ we uniformly obtain a computable and increasing sequence $\seq{\alpha_s}_{s<\w}$ which is cofinal in $\alpha$. We may assume that for all $s$, $\seq{\alpha_s}$ is odd. 

For a successor $\alpha<\delta^*$, we let $\alpha_s = \alpha-1$ for all $s$. 

\begin{lemma}\label{lem_definition_of_H_sets}
	There is a computable function $f$ such that for all $\alpha<\delta^*\setminus \{0,1\}$ and all $m\in \Nat$, 
	$m\in \emptyset_{(\alpha)}$ if and only if for some $s<\w$, $f(\alpha, m, s) \notin \emptyset_{(\alpha_s)}.$
	
	Furthermore, we may assume that if $f(\alpha,m,s)\notin \emptyset_{(\alpha_s)}$, then for all $t\ge s$, $f(\alpha,m,t)\notin \emptyset_{(\alpha_t)}$; and that $f(\alpha,m,s)\ge s$. 
\end{lemma}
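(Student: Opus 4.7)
The plan is to construct $f$ by a uniform application of the $s$-$m$-$n$ theorem, combining two ingredients: the natural $\Sigma^0_\alpha$ representation of $\emptyset_{(\alpha)}$ afforded by the adjusted hierarchy of Subsection~\ref{subsubsec_corrected}, together with the $\Pi^0_\beta$-completeness of $\overline{\emptyset_{(\beta)}}$ for $\beta<\alpha$ (a consequence of the $\Sigma^0_\beta$-completeness of $\emptyset_{(\beta)}$ stated there). All of this has to be carried out uniformly in $\alpha$, working inside $H$ so that the construction goes through for nonstandard $\alpha\in\delta^*\setminus\omck$ by $\mathbf{\Delta}^1_1$-absoluteness.

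Suppose first that $\alpha$ is a successor, so $\alpha_s=\alpha-1$ for all $s$. Then $\emptyset_{(\alpha)}$ is c.e.\ in $\emptyset_{(\alpha-1)}$, via an index obtained uniformly from $\alpha$. A Post-style manipulation (folding the positive oracle queries into the outer existential quantifier and leaving only the negative queries inside) rewrites $m\in\emptyset_{(\alpha)}$ as $\exists s\, B(m,s)$ with $B$ uniformly $\Pi^0_{\alpha-1}$ in $\alpha,m,s$. Composing with the $\Pi^0_{\alpha-1}$-completeness of $\overline{\emptyset_{(\alpha-1)}}$ yields a computable $f$ such that $B(m,s)$ iff $f(\alpha,m,s)\notin \emptyset_{(\alpha-1)}=\emptyset_{(\alpha_s)}$, which is exactly the required equivalence.

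For $\alpha$ a limit, the definition in Subsection~\ref{subsubsec_corrected} already presents $\Sigma^0_\alpha$ as the effective union of $\Pi^0_\beta$ classes for $\beta<\alpha$. Using the cofinal sequence $\seq{\alpha_s}$ of successor notations, this union can be re-indexed as a uniformly-in-$s$ sequence $B_s(m)$ of $\Pi^0_{\alpha_s}$ predicates with $\emptyset_{(\alpha)}=\bigcup_s B_s$ (at stage $s$, include all pairs $(i,\beta)$ enumerated by stage $s$ into the $\Sigma^0_\alpha$-index of $\emptyset_{(\alpha)}$ with $\beta\le\alpha_s$, viewing each $\Pi^0_\beta$ witness as a $\Pi^0_{\alpha_s}$ witness). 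Each $B_s$ many-one reduces to $\overline{\emptyset_{(\alpha_s)}}$ by $\Pi^0_{\alpha_s}$-completeness, uniformly in $\alpha,m,s$, producing $f$ as before.

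The ``furthermore'' clauses are handled as follows: the re-indexing in both cases can be arranged monotonically ($B_s\subseteq B_t$ for $s\le t$ in the limit case, and in the successor case the witness ``halts in $\le s$ steps'' is already monotone), so that once $f(\alpha,m,s)\notin\emptyset_{(\alpha_s)}$ appears, all $f(\alpha,m,t)$ for $t\ge s$ also lie outside $\emptyset_{(\alpha_t)}$; and the condition $f(\alpha,m,s)\ge s$ is enforced by padding the resulting index. The main obstacle I anticipate is keeping the construction uniform for nonstandard $\alpha$: this is handled because the $\Sigma^0_\alpha$-indexing, the Post-style rearrangements, and the $\Pi^0_{\alpha_s}$-completeness reductions all make sense inside $H$ and agree with the usual ones for $\alpha<\omck$ by absoluteness, so a single computable $f$ serves for all $\alpha\in\delta^*\setminus\{0,1\}$.
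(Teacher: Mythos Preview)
Your proposal is correct and follows essentially the same approach as the paper: express $m\in\emptyset_{(\alpha)}$ via the $\Sigma^0_\alpha$ representation of $\emptyset_{(\alpha)}$ as a union of $\Pi^0_{\alpha_s}$ conditions, then many-one reduce each such condition to $\overline{\emptyset_{(\alpha_s)}}$ using $\Sigma^0_{\alpha_s}$-completeness, with padding for the final clause. The only difference is presentational: you split into successor and limit cases (using a Post-style rewriting in the successor case), whereas the paper treats both uniformly by directly invoking the definition of $\Sigma^0_\alpha$ from Subsection~\ref{subsubsec_corrected} to write $m\in\emptyset_{(\alpha)}\iff \exists t\ (m\notin W_{e(t),\beta(t)})$ with $\beta(t)<\alpha$, and then for each $s$ bundles the first $t$-many such conditions into a single $\Sigma^0_{\alpha_s}$ question.
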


Hence, either $m\notin \emptyset_{(\alpha)}$, and for all $s$, $f(\alpha,m,s)\in \emptyset_{(\alpha_s)}$; or $m\in \emptyset_{(\alpha)}$, and for all but finitely many $s$, $f(\alpha,m,s)\notin \emptyset_{(\alpha_s)}$. 

\begin{proof}
	Let $\alpha\ge 1$. Since $\emptyset_{(\alpha)}$ is $\Sigma^0_\alpha$, we can find, effectively in $\alpha$ and $m$, a sequence $\seq{e(t),\beta(t)}$ (where $\beta(t)<\alpha$) such that $m\in \emptyset_{(\alpha)}$ if and only if there is some $t<\w$ such that $m\notin W_{e(t),\beta(t)}$. For all $t$ we can effectively find some $s=s(\alpha,m,t)$ such that $\alpha_s\ge \beta(t')$ for all $t'\le t$. Since $\emptyset_{(\alpha_s)}$ is $\Sigma^0_{\alpha_s}$-complete, this means that we can find a function $f$ such that $f(\alpha,m,s)\in \emptyset_{(\alpha_s)}$ if and only if for all $t'\le t$, $m\in W_{e(t'),\beta(t')}$. We get $f(\alpha,m,s)\ge s$ by using the padding lemma. 
\end{proof}

Using the fattening operation we discussed in Subsection \ref{subsec_fat_trees}, we define two operations on countable sequences trees, mapping (indices of) uniformly computable sequences of trees to (indices of) computable trees. Let $\seq{T_i}_{i<\w}$ be a sequence of trees. We let
\begin{itemize}
	\item $\supr_i T_i = \fat(S)$, where $S$ is obtained from the disjoint union of the trees $T_i$ and identifying their roots into a single root. We also let
	\item $\mini_i T_i = \fat(S)$, where $S$ is the collection of all nonempty strings of the form 
	\[ \seq{ \seq{x_{0,0}}, \seq{x_{1,0}, x_{1,1}}, \seq{x_{2,0}, x_{2,1}, x_{2,2}}, \dots, \seq{x_{k,0}, x_{k,1}, \dots, x_{k,k}} },\]
	where for each $j\le k$ and $i\le j$, $x_{j,i}\in T_i$, $x_{i,i} = \rooot(T_i)$, and $x_{i,i}<_{T_i} x_{i+1,i} <_{T_i} \dots <_{T_i} x_{k,i}$. The root of $S$ is $\seq{\seq{\rooot(T_0)}}$. 
\end{itemize}

\begin{lemma}\label{lem_supr_and_mini}
	Suppose that $\seq{T_i}\in H$ and that every $T_i$ is well-founded in $H$. Then
	\begin{enumerate}
		\item $\rk(\supr_i T_i) = \sup_i \rk(T_i)$; and
		\item $\rk(\mini_i T_i) = \min_i (\rk(T_i)+i)$.
	\end{enumerate}
\end{lemma}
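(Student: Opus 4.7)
Both parts reduce to computing the rank of the unfattened tree $S'$ (for (1)) or $S$ (for (2)), since Lemma \ref{lem_fattening_makes_fat} gives $\rk(\fat(T)) = \rk(T)$ for any tree $T$ well-founded in $H$, and the well-foundedness of the $T_i$ in $H$ is inherited by $S'$ and $S$. Part (1) is direct from the rank recursion: the children of $\rooot(S')$ in $S'$ are precisely the children of $\rooot(T_i)$ in $T_i$ for each $i$, and for such a child $c$ the subtree $S'_c$ coincides with $(T_i)_c$, so $\rk_{S'}(c) = \rk_{T_i}(c)$. Hence
\[
\rk(S') \;=\; \sup\{\rk_{T_i}(c) + 1 : i < \omega,\ c \text{ a child of } \rooot(T_i) \text{ in } T_i\} \;=\; \sup_i \rk(T_i).
\]

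For part (2), the first step is to establish an auxiliary fact: for any $H$-well-founded tree $T$ and any $y \in T$, the set $\{\rk_T(z) : z >_T y\}$ equals $\{\beta : \beta < \rk_T(y)\}$. The upper bound is immediate from the rank recursion. The lower bound is by transfinite induction on $\rk_T(y)$: if $\rk_T(y) = \gamma+1$ is a successor, some child $c$ of $y$ realizes $\rk_T(c) = \gamma$, and the inductive hypothesis applied to $c$ fills in all smaller ranks; if $\rk_T(y)$ is a limit, for each $\beta < \rk_T(y)$ one finds a child $c$ with $\rk_T(c) > \beta$ and applies the inductive hypothesis to $c$. This is what allows us to freely prescribe ranks when extending a path upward on any $T_i$.

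Next, for $\sigma \in S$ at level $k$ (a string of length $k+1$) with last row $\seq{x_{k,0},\ldots,x_{k,k}}$, define the \emph{track contribution}
\[
\rho_i(\sigma) \;=\; \begin{cases} \rk_{T_i}(x_{k,i}) & \text{if } i \leq k, \\ \rk(T_i) + (i-k) & \text{if } i > k. \end{cases}
\]
I claim $\rk_S(\sigma) = \min_i \rho_i(\sigma)$, proved by induction along the well-founded descendant order of $S$. The base case (leaves) holds because $\sigma$ has no $S$-children exactly when some $x_{k,i}$ is a leaf of $T_i$, i.e., exactly when some $\rho_i(\sigma) = 0$. For the inductive step, a child $\tau$ of $\sigma$ is obtained by choosing on each old track $i \leq k$ a strict $T_i$-descendant $y_i$ of $x_{k,i}$, and adjoining $\rooot(T_{k+1})$ on the new track. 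By the auxiliary fact, each $\rho_i(\tau)$ for $i \leq k$ ranges independently over all ordinals $< \rho_i(\sigma)$; for $i \geq k+1$, one checks $\rho_i(\tau) = \rho_i(\sigma) - 1$, well-defined because $\rk(T_i) + (i-k)$ is a successor whenever $i > k$. Setting $B = \min_{i \leq k}\rho_i(\sigma)$ and $A+1 = \min_{i > k}\rho_i(\sigma)$, the inductive hypothesis reduces the sup over children to the ordinal identity
\[
\sup\{\min(X,A) + 1 : X < B\} \;=\; \min(B,\, A+1) \;=\; \min_i \rho_i(\sigma),
\]
verified by splitting on whether $A < B$ or $A \geq B$. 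Evaluating at $\sigma_0 = \rooot(S)$, where $\rho_i(\sigma_0) = \rk(T_i) + i$, yields $\rk(S) = \min_i(\rk(T_i) + i)$, as desired.

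The main obstacle is the bookkeeping inside the inductive step of (2): carefully tracking how $\rho_i$ changes between $\sigma$ and its children depending on whether $i$ is an old or new track, establishing the ordinal identity above, and confirming that $\rho_i(\sigma)$ is always a successor on new tracks $i > k$. Once the auxiliary descendant-range fact is in place, the rest of the argument is routine transfinite arithmetic.
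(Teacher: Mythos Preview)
Your proof is correct. Part (1) is essentially the same as the paper's. Part (2), however, takes a genuinely different route.

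For (2), the paper argues two separate inequalities. For the upper bound, it fixes an arbitrary $k$, takes a node $\bar\sigma\in S$ of height $k$, and exhibits a rank-decreasing map $S_{\bar\sigma}\to T_k$ by projecting to the $k\tth$ coordinate of the last row; this gives $\rk_S(\bar\sigma)\le \rk(T_k)$ and hence $\rk(S)\le \rk(T_k)+k$. For the lower bound, it picks the minimising $k$ and explicitly builds an embedding of $T_k$ into $S_{\bar\sigma}$ for a suitable $\bar\sigma$ at height $k$, maintaining a rank invariant on every track as it climbs.

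Your approach instead computes $\rk_S(\sigma)$ exactly at \emph{every} node via the closed-form $\min_i\rho_i(\sigma)$ and a single transfinite induction, reducing the inductive step to the ordinal identity $\sup_{X<B}(\min(X,A)+1)=\min(B,A+1)$. This is tidier in that it avoids the somewhat fiddly embedding construction that occupies most of the paper's argument, and it gives strictly more information (the rank at every node, not just the root). On the other hand, the paper's two maps are geometrically transparent, and its upper bound is immediate once the projection is named. Your auxiliary fact about descendant ranks is already recorded in the paper (just before Definition~\ref{def_fat_tree}) as $\range\rk_{T_x}=\rk_T(x)+1$, so you may simply quote it rather than reprove it.
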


\begin{proof} 
	Work in $H$. 
	
	If $\supr_i T_i = \fat(S)$ with $S$ defined as above, then for all $i$ and $x\in T_i\setminus \{\rooot(T_i)\}$, $\rk_S(x) = \rk_{T_i}(x)$. So 
	\[ \rk(S) = \sup_{i<\w} \sup_{x\in T_i\setminus \{\rooot(T_i)\}} (\rk_{T_i}(x)+1)  = \sup_{i<\w} \rk_{T_i}(\rooot(T_i)) = \sup_i \rk(T_i) .\]
	
	\medskip
	
	Let $\mini_i T_i = \fat(S)$ with $S$ defined as above. Suppose that $\bar \s = \seq{\s_0,\s_2,\dots, \s_k}\in S$. Let $\s_k = \seq{x_0,x_1,\dots, x_k}$; so $x_k = \rooot(T_k)$. Then $\rk_S(\bar \s)\le \rk(T_k)$. To see this, consider the (non-injective) homomorphism $f$ from $S_{\bar \s}$ to $T_k$, mapping $\seq{\s_1,\s_2,\dots, \s_{m-1},\seq{y_0,y_1,\dots, y_m}}$ to $y_k$. We see by induction on $\rk(\bar \tau)$ that for all $\bar \tau\in S_{\bar \s}$, $\rk_S(\bar \tau) \le \rk_{T_k}(f(\bar \tau)))$. So for all $\bar \s\in S$ of height $k$, we have $\rk_S(\bar s)\le \rk(T_k)$. If $S$ contains elements of height $k$, this shows that $\rk(S) \le \rk(T_k)+k$. Otherwise, $\rk(S) \le k$, so certainly $\rk(S)\le \rk(T_k)+k$. 
	
		Now let $k<\w$ such that for all $i<\w$, $\rk(T_k)+k\le \rk(T_i)+i$. We show that $T_k$ is embeddable into $S_{\bar \s}$ for some $\bar \s\in S$ of height $k$ in $S$, whence we get that $\rk(S)\le \rk(T_k) + k$. 
	
	For $i<k$, since $\rk(T_i)\ge \rk(T_k)+ (k-i)$, we can find a sequence $\rooot(T_i) = x_{i,i}<_{T_i} x_{i+1,i} <_{T_i} \dots <_{T_i} x_{k,i}$ such that $\rk_{T_i}(x_{k,i}) \ge \rk(T_k)$. We let 
		\[ \bar \s = \seq{ \seq{x_{0,0}}, \seq{x_{1,0}, x_{1,1}}, \seq{x_{2,0}, x_{2,1}, x_{2,2}}, \dots, \seq{x_{k,0}, x_{k,1}, \dots, x_{k,k-1}, \rooot(T_k)} },\]
	and set $f(\rooot(T_k)) = \bar \s$. By induction on the height of $y\in T_k$, we define $f(y)$ so that $f\colon T_k\to S_{\bar \s}$ is an embedding. Let $x\in T_k$ have height $m$ in $T_k$. Suppose that $f(x)$ has been defined, and that $f(x)  = \seq{\tau_0,\tau_1,\dots, \tau_{k+m}}$ with $\tau_{k+m} = \seq{x_0,x_1,\dots, x_{k+m}}$ such that $x_{i}\in T_i$, $\rk_{T_i}(x_i) \ge \rk_{T_k}(x)$, and $x_k = x$. Let $y$ be a child of $x$ on $T_k$. For all $i\le k+m$, we can find some $y_i >_{T_i} x_i$ such that $\rk_{T_i}(y_i) \ge \rk_{T_k}(y)$; we also choose $y_k=y$. For $i= k+m+1$, since $\rk_{T_i} + (i-k)\ge \rk(T_k)$, we have $\rk(T_k)\le \rk(T_i) + (m+1)$. Since the height of $y$ in $T_k$ is $m+1$, $\rk_{T_k}(y) + (m+1) \le \rk(T_k)$. Putting these together, we get $\rk_{T_k}(y)\le \rk(T_i)$, so $\rk_{T_i}(\rooot(T_i))\ge \rk_{T_k}(y)$; we choose $y_{i} = \rooot(T_i)$. We then let $f(y) = \seq{\tau_0,\tau_1,\dots, \tau_{k+m}, \seq{y_0,y_1,\dots, y_{k+m+1}}}$.
\end{proof}

\

We define computable trees $T^\beta_\gamma(m)$ for all nonzero $\gamma<\delta^*$, all $m\in \Nat$, and all $\beta < \delta^*$. 
An index for $T^\beta_\gamma(m)$ is obtained effectively from $\beta,\gamma$ and $m$. These trees are defined by working in $H$, performing effective transfinite recursion on $\gamma<\delta^*$. There are three cases:
\begin{itemize}
	\item $\gamma=1$: if $m\notin \emptyset'$, then we let $T^\beta_\gamma(m)$ consist of a root only. If at stage $s$ we see that $m\in \emptyset'$, then we let $T^\beta_\gamma(m)\cong T_\beta$, using elements with G\"odel numbers greater than $s$. 

	\item $\gamma>1$ is even: we let $T^\beta_\gamma(m) = \mini_{s\ge m} T^\beta_{\gamma_s} (f(\gamma,m,s))$.

	\item $\gamma>1$ is odd: we let $T^\beta_\gamma(m) = \supr_{s<\w}T^{\beta}_{\gamma_s}(f(\gamma,m,s))$. Of course in this case, $\gamma_s = \gamma-1$.
\end{itemize}
Of course, here $f$ is the function guaranteed by Lemma \ref{lem_definition_of_H_sets}. 
Note that for all $\beta,\gamma$ and $m$, $T^\beta_\gamma(m)$ is fat. We calculate ranks. 

\begin{proposition}\label{prop_calculate_ranks}
	Let $\gamma\in \delta^*\setminus\{0\}$, and let $m\in \Nat$. 
	\begin{enumerate}
		\item If $\gamma=2\delta$ is even, then for all $\beta\ge \w\delta$,
			\begin{enumerate}
				\item if $m\notin \emptyset_{(\gamma)}$, then $\rk(T^\beta_\gamma(m)) = \beta$;
				\item if $m\in \emptyset_{(\gamma)}$, then $\rk(T^\beta_\gamma(m)) < \w\delta$. 
			\end{enumerate}
			\item If $\gamma=2\delta+1$ is odd, then for all $\beta\ge \w\delta$,
				\begin{enumerate}
					\item if $m\notin \emptyset_{(\gamma)}$, then $\rk(T^\beta_\gamma(m)) = \w\delta$; 
					\item if $m\in \emptyset_{(\gamma)}$, then $\rk(T^\beta_\gamma(m)) = \beta$. 
				\end{enumerate}
	\end{enumerate}
\end{proposition}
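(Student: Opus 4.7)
The plan is transfinite induction on $\gamma \in [1, \delta^*)$, proving cases (1) and (2) simultaneously, using the rank arithmetic of $\supr$ and $\mini$ from Lemma \ref{lem_supr_and_mini} together with the membership characterization of $\emptyset_{(\alpha)}$ supplied by Lemma \ref{lem_definition_of_H_sets}. The base case is $\gamma = 1$, which falls under case (2) with $\delta = 0$ and $\w\delta = 0$: by construction, $T^\beta_1(m)$ is either a single-root tree (rank $0$, when $m \notin \emptyset'$) or an isomorphic copy of $T_\beta$ (rank $\beta$ by Lemma \ref{lem_fattening_makes_fat}, when $m \in \emptyset'$).

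For the inductive step, suppose $\gamma > 1$ and the proposition holds for every $\gamma' < \gamma$. If $\gamma = 2\delta + 1$ is odd, then $T^\beta_\gamma(m) = \supr_s T^\beta_{\gamma - 1}(f(\gamma, m, s))$ with $\gamma - 1 = 2\delta$ even, and Lemma \ref{lem_supr_and_mini}(1) identifies the rank as $\sup_s \rk(T^\beta_{\gamma - 1}(f(\gamma, m, s)))$. By the inductive hypothesis applied to case (1) at $\gamma - 1$, each term equals $\beta$ when $f(\gamma,m,s) \notin \emptyset_{(\gamma-1)}$ and lies strictly below $\w\delta$ when $f(\gamma,m,s) \in \emptyset_{(\gamma-1)}$. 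In case (2b), $m \in \emptyset_{(\gamma)}$ forces at least one witnessing $s$ with $f(\gamma,m,s) \notin \emptyset_{(\gamma-1)}$, producing a term of rank $\beta$ with no term larger, so the supremum equals $\beta$. In case (2a), every term lies below $\w\delta$, so the supremum is at most $\w\delta$; the matching lower bound requires that as $s$ varies, the inductively-established ranks are cofinal in $\w\delta$.

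If $\gamma = 2\delta$ is even, write $\gamma_s = 2\delta_s + 1$ (so $\gamma_s = \gamma - 1$ for successor $\gamma$ and $\gamma_s \nearrow \gamma$ for limit $\gamma$); in either case $\w\delta_s + k < \w\delta$ for every finite $k$, since $\delta_s + 1 \le \delta$. The construction gives $T^\beta_\gamma(m) = \mini_{s \ge m} T^\beta_{\gamma_s}(f(\gamma, m, s))$, and Lemma \ref{lem_supr_and_mini}(2) yields
\[
\rk(T^\beta_\gamma(m)) = \min_{s \ge m}\bigl(\rk(T^\beta_{\gamma_s}(f(\gamma,m,s))) + (s - m)\bigr).
\]
By the inductive hypothesis (case (2) at $\gamma_s$), the $s$-th contribution is $\beta + (s-m)$ if $f(\gamma,m,s) \in \emptyset_{(\gamma_s)}$ and is $\w\delta_s + (s-m) < \w\delta$ if $f(\gamma,m,s) \notin \emptyset_{(\gamma_s)}$. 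In case (1a), $m \notin \emptyset_{(\gamma)}$ keeps $f(\gamma,m,s) \in \emptyset_{(\gamma_s)}$ for every $s$, so every contribution is $\beta + (s-m)$ and the minimum, attained at $s = m$, equals $\beta$. In case (1b), $m \in \emptyset_{(\gamma)}$ yields a least escape stage $s_0$ after which the cheap contributions $\w\delta_s + (s-m) < \w\delta$ are available, so the overall minimum is one of these and is strictly below $\w\delta$, regardless of the position of $s_0$ relative to $m$.

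The main obstacle is case (2a), specifically obtaining the lower bound $\sup \ge \w\delta$. The other three equalities are each established by pairing a term-by-term upper bound with a single witnessing summand attaining it, but here every summand is a strict inequality and the target $\w\delta$ can only be reached in the limit. This is handled by a modest refinement of the witness function from Lemma \ref{lem_definition_of_H_sets}: one arranges that, for $m \notin \emptyset_{(\gamma)}$, the enumeration $s \mapsto f(\gamma, m, s)$ of indices into $\emptyset_{(\gamma - 1)}$ is rich enough that — using the padding $f(\gamma, m, s) \ge s$ and the explicit shape of the inductively-computed ranks from case (1b) — the resulting ranks $\rk(T^\beta_{\gamma - 1}(f(\gamma, m, s)))$ form a cofinal sequence below $\w\delta$. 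This strengthening preserves the other properties of $f$ invoked in the induction and closes the argument uniformly for all $\gamma < \delta^*$.
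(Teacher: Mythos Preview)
Your proof follows exactly the paper's approach: transfinite induction on $\gamma$ with base case $\gamma=1$, the even/odd case split, the rank formulas from Lemma~\ref{lem_supr_and_mini}, and the membership characterisation from Lemma~\ref{lem_definition_of_H_sets}. You also correctly single out the lower bound in case~(2a) as the one genuine difficulty; everything else is bookkeeping, and your treatment of (1a), (1b), (2b) matches the paper's.

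The only substantive difference is how you close case~(2a). You propose going back and \emph{refining} the function $f$ of Lemma~\ref{lem_definition_of_H_sets} so that the ranks $\rk\bigl(T^\beta_{\gamma-1}(f(\gamma,m,s))\bigr)$ become cofinal in $\omega\delta$; this is left unspecified and shifts work back into that lemma. The paper instead leaves $f$ untouched and \emph{strengthens the induction hypothesis}: in the even step it records, alongside (1a) and (1b), the auxiliary lower bound $\rk(T^\beta_{2\delta}(m)) \ge \omega\delta_m$, where $\delta_t = \lfloor (2\delta)_t/2\rfloor$. This bound drops out of the explicit rank formula $\omega\delta_{\max\{t,m\}} + (\max\{t,m\}-m)$ that you already computed in (1b), together with the monotonicity of $\langle\delta_s\rangle$. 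Then in case~(2a), applying the bound at $m' = f(\gamma,m,s)$ gives $\rk(S_s) \ge \omega\delta_{f(\gamma,m,s)} \ge \omega\delta_s$ (using precisely the padding $f(\gamma,m,s)\ge s$ that you mention), and the paper concludes via $\sup_s \delta_s = \delta$. So the ingredients you name --- the padding and the explicit shape of the (1b) ranks --- are exactly what the paper uses, but packaged as an extra induction invariant rather than as a modification of $f$; this is cleaner since it avoids reopening Lemma~\ref{lem_definition_of_H_sets}.
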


\begin{proof}
	Working in $H$, we verify the proposition by induction on $\gamma$. 
	
	For $\gamma=1$, we have $\delta=0$. If $m\notin \emptyset_{(1)} = \emptyset'$, then $T^\beta_\gamma(m)$ has only a root, and its rank is $0= \w\delta$. If $m\in \emptyset'$ then $T^\beta_\gamma(m)\cong T_\beta$, whose rank is $\beta$. 
	
	\medskip
	
	Let $\gamma = 2\delta$ be even. For $s<\w$, let $S_s = T^\beta_{\gamma_s}(f(\gamma,m,s))$; so $T^\beta_\gamma(m) = \mini_{s\ge m} S_s$, whence $\rk(T^\beta_\gamma(m)) = \min_{s<\w} (\rk(S_s)+(s-m))$. For each $s$, $\gamma_s$ is odd: if $\gamma$ is a limit, then we required that $\gamma_s$ be odd; and if $\gamma$ is a successor, then $\gamma_s = \gamma-1$ is odd. Let $\delta_s = \integer{\gamma_s/2}$, so $\gamma_s = 2\delta_s +1$. For all $s$, $\delta_s<\delta$ because $\gamma_s < \gamma$. Hence $\beta \ge \w\delta_s$. So by induction, $\rk(S_s) = \w\delta_s$ if $f(\gamma,m,s)\notin \emptyset_{(\gamma_s)}$, and $\rk(S_s) = \beta$ if $f(\gamma,m,s)\in\emptyset_{(\gamma_s)}$. 
	
	If $m\notin \emptyset_{(\gamma)}$, then for all $s$, $f(m,\gamma,s)\in \emptyset_{(\gamma_s)}$, so for all $s$, $\rk(S_s) = \beta$. Then $\rk(T^\beta_\gamma(m)) = \min_{s\ge m} (\beta + (s-m)) = \beta$. If $m\in \emptyset_{(\gamma)}$, then there is some $t<\w$ such that for all $s<t$, $f(\gamma,m,s)\in \emptyset_{(\gamma_s)}$, and for all $s\ge t$, $f(\gamma,m,s)\notin \emptyset_{(\gamma_s)}$. Hence, for all $s<t$, $\rk(S_s) = \beta$, and for all $s\ge t$, $\rk(S_s) = \w\delta_s$. Since $\delta_s<\delta$, we have $\w\delta_s+s < \w\delta \le \beta$. It follows that $\rk(T^\beta_\gamma(m)) = \w\delta_{\max\{t,m\}} + \max\{t,m\} - m < \w\delta$. 
	
	Before we check the odd case, we note that $\rk(T^\beta_\gamma(m))\ge \w\delta_m$; this, because $\seq{\delta_s}$ is non-decreasing.

	\medskip
	
	Now let $\gamma = 2\delta+1$ be odd, so $\gamma-1 = 2\delta$, and for all $s$, $\gamma_s = \gamma-1$. Again let $S_s = T^\beta_{\gamma_s}(f(\gamma,m,s))$. In this case $T^\beta_\gamma(m) = \supr_{s<\w} S_s$, and so $\rk(T^\beta_\gamma)(m) = \sup_{s<\w}\rk(S_s)$. Noting that $\gamma-1$ is even, induction shows that if $f(\gamma,m,s)\notin \emptyset_{(\gamma-1)}$, then $\rk(S_s) =\beta(s)$, and if $f(\gamma,m,s)\in \emptyset_{(\gamma-1)}$, then $\rk(S_s) < \w\delta$. 
	
	If $m\in \emptyset_{(\gamma)}$, then for all but finitely many $s<\w$ we have $f(\gamma,m,s)\notin \emptyset_{(\gamma-1)}$; so for all but finitely many $s$, $\rk(S_s) = \beta$; for other $s$, we have $\rk(S_s) < \w\delta \le \beta$. In this case, $\rk(T^\beta_\gamma(m)) = \sup_s \rk(S_s) = \beta$.
	
	If $m\notin \emptyset_{(\gamma)}$, then for all $s$, $f(\gamma,m,s)\in \emptyset_{(\gamma-1)}$, so for all $s$, $\rk(S_s)< \w\delta$, so $\rk(T^\beta_\gamma(m)) \le \w\delta$. However, we checked that $\rk(S_s) \ge \w\delta_{f(\gamma,m,s)}$, where $\delta_t = \integer{(\gamma-1)_t /2}$. As $f(\gamma,m,s)\ge s$, $\rk(S_s)\ge \w\delta_s$. Since $\sup_s \delta_s = \delta$, we have $\rk(T^\beta_\gamma(m))\ge \w\delta$. 	
\end{proof}

\

We can now finally show the $\Delta^0_{2\alpha+1}$-hardness of the isomorphism problem for the pair $(\SSS_{\alpha,0},\SSS_{\alpha,1})$. 

\begin{lemma}\label{lem_hardness_ce}
	Let $\alpha\in \delta^*\setminus\{0\}$. If $A\le_\Tur \emptyset_{(2\alpha)}$, then there is a uniformly computable sequence of structures $\seq{\NN_{n}}_{n<\w}$ such that for all $n$, 
	\begin{itemize}
		\item if $n\in A$, then $\NN_n\cong \SSS_{\alpha,1}$; and 
		\item if $n\notin A$, then $\NN_n\cong \SSS_{\alpha,0}$.
	\end{itemize}
\end{lemma}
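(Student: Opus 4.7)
The plan is to encode the membership question for $A$ and its complement $\bar A$ into the ranks of two fat trees produced by Proposition \ref{prop_calculate_ranks}, and then to assemble these as a pair-of-trees structure $\NN_n$.

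First I would use the hypothesis $A \le_\Tur \emptyset_{(2\alpha)}$ to observe that both $A$ and $\bar A$ are c.e.\ in $\emptyset_{(2\alpha)}$, and hence both belong to $\Sigma^0_{2\alpha+1}$ in the adjusted hyperarithmetic hierarchy of Subsection \ref{subsubsec_corrected}. Since $\emptyset_{(2\alpha+1)}$ is $\Sigma^0_{2\alpha+1}$-complete, and this completeness is uniform in indices, we may fix computable functions $g_0$ and $g_1$ such that, for all $n$, $n \in A$ iff $g_1(n) \in \emptyset_{(2\alpha+1)}$, and $n \notin A$ iff $g_0(n) \in \emptyset_{(2\alpha+1)}$.

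Next I would apply Proposition \ref{prop_calculate_ranks} with the odd ordinal $\gamma = 2\alpha+1$ (so $\delta = \alpha$) and the parameter $\beta = \w\alpha+1$. For $i \in \{0,1\}$ and $n<\w$, set $U^i_n = T^{\w\alpha+1}_{2\alpha+1}(g_i(n))$. Each $U^i_n$ is a uniformly computable fat tree, and by the proposition
\[
\rk(U^i_n) = \begin{cases} \w\alpha+1 & \text{if } g_i(n) \in \emptyset_{(2\alpha+1)}, \\ \w\alpha & \text{if } g_i(n) \notin \emptyset_{(2\alpha+1)}. \end{cases}
\]
Since $T_{\w\alpha}$ and $T_{\w\alpha+1}$ are also fat trees of ranks $\w\alpha$ and $\w\alpha+1$ respectively, Proposition \ref{prop_fat_trees_universal} identifies each $U^i_n$ with whichever of $T_{\w\alpha}, T_{\w\alpha+1}$ has the matching rank.

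Finally, let $\NN_n$ be the pair of trees $(U^1_n, U^0_n)$, i.e.\ the structure on $U^1_n \sqcup U^0_n$ equipped with the parent function on each piece and a unary predicate singling out $U^1_n$. If $n \in A$, then $U^1_n$ has rank $\w\alpha+1$ and $U^0_n$ has rank $\w\alpha$, so $\NN_n \cong (T_{\w\alpha+1}, T_{\w\alpha}) = \SSS_{\alpha,1}$; if $n \notin A$ the ranks are swapped and $\NN_n \cong \SSS_{\alpha,0}$. Uniform computability of $\seq{\NN_n}_{n<\w}$ follows because indices for $T^{\w\alpha+1}_{2\alpha+1}(m)$ are computed effectively from $m$, and $g_0, g_1$ are computable. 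The main obstacle is not really in assembling these ingredients but in getting the parameters right: one must use an \emph{odd} $\gamma$ in Proposition \ref{prop_calculate_ranks} (so that the two possible rank values are $\w\delta$ and $\beta$, both actually attained), take $\beta = \w\alpha+1$ so that those two values are exactly the ranks appearing in the pairs $\SSS_{\alpha,i}$, and swap the coordinates of $(U^1_n, U^0_n)$ so that the ``flag'' component encoding $n \in A$ is the one marked by the unary predicate.
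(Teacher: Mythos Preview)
Your proposal is correct and follows essentially the same approach as the paper: reduce $A$ and $\bar A$ to $\emptyset_{(2\alpha+1)}$ via computable functions, then set $\NN_n = \bigl(T^{\w\alpha+1}_{2\alpha+1}(g_1(n)),\,T^{\w\alpha+1}_{2\alpha+1}(g_0(n))\bigr)$ and read off the isomorphism type from Proposition~\ref{prop_calculate_ranks}. The paper's proof is the same one-liner with $g,h$ in place of your $g_1,g_0$; you have simply made explicit the appeal to Proposition~\ref{prop_fat_trees_universal} identifying fat trees of equal rank, which the paper leaves implicit.
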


\begin{proof}
	Let $g\colon \w\to \w$ be a many-one reduction of $A$ to $\emptyset_{(2\alpha+1)}$, and $h$ be a many-one reduction of $\w\setminus A$ to $\emptyset_{(2\alpha+1)}$. We then let 
	\[ \NN_n = \left(T^{\w\alpha+1}_{2\alpha+1}(g(n)) , T^{\w\alpha+1}_{2\alpha+1}(h(n)) \right) . \qedhere\]
\end{proof}

In fact, this hardness is uniform, given $\alpha$ and the many-one reductions of $A$ and its complement to $\emptyset_{(2\alpha+1)}$. Moreover, it can be relativised.

\begin{proposition}\label{prop_jump_inversion_part_three}
	For any total Turing functional $\Phi$ there is a Turing functional $\Psi$ such that for any set $X$ and any nonzero $\alpha<\delta^*$, for all $n$, 
	\begin{itemize}
		\item if $\Phi(X,\emptyset_{(2\alpha)},n) = 1$, then $\Psi(X,\alpha,n)\cong \SSS_{\alpha,1}$; and		
		\item if $\Phi(X,\emptyset_{(2\alpha)},n) = 0$, then $\Psi(X,\alpha,n)\cong \SSS_{\alpha,0}$.
	\end{itemize}
\end{proposition}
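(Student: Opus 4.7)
The plan is to carry out an oracle-relativized, uniform version of Lemma~\ref{lem_hardness_ce}. First I relativize the tree construction: for any $X\in 2^\w$, define $T^{X,\beta}_\gamma(m)$ by exactly the three-case effective transfinite recursion that produced $T^\beta_\gamma(m)$, but replacing every reference to $\emptyset_{(\cdot)}$ with $X_{(\cdot)}$; an index for $T^{X,\beta}_\gamma(m)$ is then computed uniformly in $X$, $\beta$, $\gamma$, $m$. Lemma~\ref{lem_definition_of_H_sets} relativizes verbatim (the same computable $f$ works, since its construction only invokes completeness of $\emptyset_{(\cdot)}$, and the same reductions apply to $X_{(\cdot)}$). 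The analog of Proposition~\ref{prop_calculate_ranks} then follows by the same induction inside $H$: for odd $\gamma=2\delta+1$ and $\beta\ge\w\delta$, $\rk(T^{X,\beta}_\gamma(m))=\w\delta$ if $m\notin X_{(\gamma)}$, and $\rk(T^{X,\beta}_\gamma(m))=\beta$ if $m\in X_{(\gamma)}$.

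Given the total functional $\Phi$, set $A^X=\{n:\Phi(X,\emptyset_{(2\alpha)},n)=1\}$. Totality of $\Phi$ gives $A^X\le_\Tur X\oplus \emptyset_{(2\alpha)}$, so $A^X$ is $\Delta^0_{2\alpha+1}(X)$, uniformly in $X$ and $\alpha$. Since $X_{(2\alpha+1)}$ is $\Sigma^0_{2\alpha+1}(X)$-complete uniformly in $X$ and $\alpha$ (Subsection~\ref{subsubsec_corrected}), I uniformly obtain many-one reductions $g$ of $A^X$ and $h$ of its complement to $X_{(2\alpha+1)}$, and set
\[
\Psi(X,\alpha,n)\;=\;\bigl(T^{X,\w\alpha+1}_{2\alpha+1}(g(n)),\;T^{X,\w\alpha+1}_{2\alpha+1}(h(n))\bigr).
\]
If $\Phi(X,\emptyset_{(2\alpha)},n)=1$, then $g(n)\in X_{(2\alpha+1)}$ and $h(n)\notin X_{(2\alpha+1)}$, so by the relativized Proposition~\ref{prop_calculate_ranks} with $\delta=\alpha$, $\beta=\w\alpha+1$, $\gamma=2\alpha+1$, the first component has rank $\w\alpha+1$ and the second rank $\w\alpha$, yielding $\Psi(X,\alpha,n)\cong(T_{\w\alpha+1},T_{\w\alpha})=\SSS_{\alpha,1}$; the value-$0$ case is symmetric and yields $\SSS_{\alpha,0}$.

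For nonstandard $\alpha\in\delta^*\setminus\omck$, both components are fat trees that are ill-founded in $V$, hence both isomorphic to $T_\infty$, so $\Psi(X,\alpha,n)\cong(T_\infty,T_\infty)\cong\SSS_{\alpha,0}\cong\SSS_{\alpha,1}$ by Proposition~\ref{prop_jump_inversion_part_two}, and either conclusion then holds. The main obstacle is verifying that every ingredient of the original construction --- the function $f$, the recursion defining the trees, and the rank calculation --- relativizes uniformly in $X$ so that the indices are produced by a single Turing functional $\Psi$; but these verifications are all routine inspections of the earlier arguments.
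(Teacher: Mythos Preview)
Your relativization works cleanly for standard $\alpha<\omck$, but there is a genuine gap in the nonstandard case. The rank calculation in Proposition~\ref{prop_calculate_ranks} is carried out by transfinite induction \emph{inside $H$}; that is what lets the induction reach nonstandard $\gamma<\delta^*$. When you relativize the trees to $X$, the sets $X_{(\gamma)}$ for nonstandard $\gamma$ --- and hence the inductive verification --- only make sense if $X\in H$. But $H$ is a fixed countable model, and the proposition must hold for every $X\in 2^\omega$; for $X\notin H$ your claim that $T^{X,\omega\alpha+1}_{2\alpha+1}(m)$ is ill-founded whenever $\alpha$ is nonstandard is unjustified. Ill-foundedness is not automatic from the shape of the construction: a $\supr$ of well-founded trees is well-founded, and a $\mini$ is ill-founded only when \emph{every} input is, so unwinding the recursion at a nonstandard level leads to membership conditions on $X_{(\gamma)}$ for both standard and nonstandard $\gamma<2\alpha$ that cannot be verified outside $H$.

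The paper's proof avoids relativizing the trees altogether. Since $\Phi$ is total it is, by Nerode's theorem, a truth-table functional: whether $\Phi(X,\emptyset_{(2\alpha)},n)=1$ is determined by finitely many bits of $X$ together with finitely many bits of $\emptyset_{(2\alpha)}$. The dependence on $X$ is a clopen condition that $X$ evaluates directly; once that is resolved, the residual question about $\emptyset_{(2\alpha)}$ is handled by the \emph{unrelativized} Lemma~\ref{lem_hardness_ce}. The trees produced are computable (not merely $X$-computable) and hence lie in $H$ regardless of $X$, so Proposition~\ref{prop_calculate_ranks} applies as stated and the nonstandard case goes through. This separation of the $X$-oracle from the $\emptyset_{(2\alpha)}$-oracle is exactly the device your argument is missing.
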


\begin{proof}
	We may assume that for all $X$, $Y$ and $n$, $\Phi(X,Y,n)\in \{0,1\}$. The compactness of $2^\w$ shows that we can regard $\Phi$ as a \emph{truth-table} functional (this is Nerode's theorem \cite{Nerode:57}). There is a computable sequence of pairs of (canonical indices for) clopen subsets $\CC_n$ and $\DD_n$ of $2^\w$, such that for all $X$ and $Y$, $\Phi(X,Y,n)=1$ if and only if $X\in \CC_n$ and $Y\in \DD_n$; otherwise $\Phi(X,Y,n)=0$. 
	
	The set of $n<\w$ such that $\emptyset_{(2\alpha)}\in \DD_n$ is computable in $\emptyset_{(2\alpha)}$, uniformly in~$\alpha$. By the uniform version of Lemma \ref{lem_hardness_ce}, there is a uniformly computable array $\seq{\NN_{\alpha,n}}_{\alpha<\delta^*,n<\w}$ of structures such that for all nonzero $\alpha<\delta^*$ and all $n<\w$, 
	\begin{itemize}
		\item if $\emptyset_{(2\alpha)}\in \DD_n$, then $\NN_{\alpha,n}\cong \SSS_{\alpha,1}$; and
		\item if $\emptyset_{(2\alpha)}\notin \DD_n$, then $\NN_{\alpha,n}\cong \SSS_{\alpha,0}$. 		
	\end{itemize}
	The functional $\Psi$ is now defined as follows: given $X$, $\alpha>0$ and $n$, if $X\notin \CC_n$, then we output $\SSS_{\alpha,0}$; if $X\in \CC_n$, then we output $\NN_{\alpha,n}$. 
\end{proof}

\subsubsection{The proof of Theorem \ref{thm_jump_inversion}}

Let $\alpha<\delta^*$, and let $G$ be a graph. If $\alpha=0$, then we let $G^{-0}=G$. If $\alpha>0$, then we let $G^{-\alpha}$ be the structure obtained from $G$ by replacing every edge by a copy of $\SSS_{\alpha,1}$, and every non-edge by a copy of $\SSS_{\alpha,0}$. As we mentioned above, a similar construction in \cite{GHKMMS} uses pairs built from linear orderings of the form $\Int^\alpha$ instead of $\SSS_{\alpha,0}$ and $\SSS_{\alpha,1}$. 

Formally, a unary predicate $V$ defines in $G^{-\alpha}$ the set of vertices of $G$. A ternary predicate $D$ defines a partition of $G^{-\alpha}\setminus V$, the classes of which are indexed by pairs of elements of $V$. Adding the language of $\SSS_{\alpha,i}$, for $a,b\in V$, $D(a,b,-)$ is the domain of either $\SSS_{\alpha,0}$ or $\SSS_{\alpha,1}$, depending on whether the edge $(a,b)$ is in $G$ or not.  

\medskip

To prove part (1) of Theorem \ref{thm_jump_inversion}, we need to show, for all nonzero $\alpha<\omck$, that if a set $X$ computes a copy of $G^{-\alpha}$, then $X_{(2\alpha)}$ computes a copy of $G$. The isomorphic copies of $G^{-\alpha}$ are the structures $H^{-\alpha}$ for $H\cong G$; so it suffices to show that if a set $X$ computes $G^{-\alpha}$, then $X_{(2\alpha)}$ computes $G$. 

Say $X$ computes $G^{-\alpha}$. Then $X$ computes the set of vertices $V$ of $G$. To recover the edges of $G$, for $a,b\in V$, $X_{(2\alpha)}$ examines the structure $M_{a,b}$ in $G^{-\alpha}$ whose domain is $D(a,b,-)$; an $X$-computable index for $M_{a,b}$ is effectively obtained. Using the functional given by Proposition \ref{prop_jump_inversion_part_one}, $X_{(2\alpha)}$ can tell if $M_{a,b}$ is isomorphic to $\SSS_{\alpha,0}$ or $\SSS_{\alpha,1}$, and so decide if $(a,b)$ is an edge of $G$ or not. 

We note that this process can be reversed; say $X_{(2\alpha)}$ computes $G$. Then using a relativisation of Lemma \ref{lem_hardness_ce} to $X$, which is possible for standard $\alpha<\omck$, we see that we can indeed $X$-computably replace the edges of $G$ by the correct copies of either $\SSS_{\alpha,0}$ or $\SSS_{\alpha,1}$. We noted, though, that this direction is not needed for the proof of Theorem \ref{main_theorem}. 

\medskip

Part (2) of Theorem \ref{thm_jump_inversion} follows from Proposition \ref{prop_jump_inversion_part_two}. For any countable graphs $G$ and $H$, for any nonstandard $\alpha,\beta \in \delta^*\setminus \omck$, we have $G^{-\alpha} \cong H^{-\beta}$; this is because $\SSS_{\alpha,0}\cong \SSS_{\alpha,1}\cong \SSS_{\beta,0}\cong\SSS_{\beta,1}$. Note that this structure, $G^{-\infty}$, has a computable copy. 

\medskip

We turn to the last part of the theorem. Given a total Turing functional $\Phi$, we need to construct a Turing functional $\Psi$ such that for all $X\in 2^\w$, $\alpha<\delta^*$, and all graphs $G$, if $\Phi(X,\emptyset_{(2\alpha)})\cong G$ then $\Psi(X,\alpha)\cong G^{-\alpha}$. We use Proposition~\ref{prop_jump_inversion_part_three}. Given $X$ and $\alpha$, let $\seq{v_n}_{n<\w}$ be an $X\oplus \emptyset_{(2\alpha)}$-computable enumeration of the vertices of $\Phi(X,\emptyset_{(2\alpha)})$. For any $n,m<\w$ we can ask $X\oplus \emptyset_{(2\alpha)}$ whether the edge $(v_n,v_m)$ belongs to the graph $\Phi(X,\emptyset_{(2\alpha)})$; this is uniform in $X$, $\alpha$, $n$ and $m$, and by assumption we always get an answer. Proposition \ref{prop_jump_inversion_part_three} gives us a functional such that for all $n,m<\w$, $X\in 2^\w$ and nonzero $\alpha<\delta^*$, outputs a copy of $\SSS_{\alpha,1}$ if the edge $(v_n,v_m)$ is in the graph $G=\Phi(X,\emptyset_{(2\alpha)})$, and otherwise gives a copy of $\SSS_{\alpha,0}$. From this functional we can easily build a copy of $G^{-\alpha}$.

\section{A Linear ordering}

We prove the following extension of Theorem \ref{main_theorem}:

\begin{theorem}\label{thm_LO}
	There is a countable linear ordering $\LL$ whose degree spectrum consists of the non-hyperarithmetic degrees. 
\end{theorem}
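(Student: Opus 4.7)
The plan is to mimic the construction of Theorem \ref{main_theorem} step by step, replacing every ingredient by a linear-ordering analogue. We build, for each $\alpha<\delta^*$, a linear ordering $\LL_\alpha$ playing the role of $\NN_\alpha = G_{\emptyset_{(2\alpha)}}^{-\alpha}$, and combine these into a single linear ordering $\LL$ via an ordered sum with rigid markers between the summands. Because a sum of linear orderings is again a linear ordering, the final object automatically lies in the correct class, and the task reduces to building the pieces so that the combined object has spectrum equal to the non-hyperarithmetic degrees.

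The first step is to replace the pair of trees $(\SSS_{\alpha,0},\SSS_{\alpha,1})$ by a pair of linear orderings $(\LL_{\alpha,0},\LL_{\alpha,1})$ with the same discerning properties as in Propositions \ref{prop_jump_inversion_part_one}--\ref{prop_jump_inversion_part_three}: for standard $\alpha<\omck$ the isomorphism type is $\Delta^0_{2\alpha+1}$-complete to detect, and for $\alpha\in\delta^*\setminus\omck$ both become isomorphic to a single computable Harrison-style linear ordering $\LL_\infty$. The paper \cite{GHKMMS} already built the standard-$\alpha$ versions of such pairs out of $\w^\alpha$ and $\ZZ^\alpha$; the overspill machinery of Section \ref{subsec_fat_trees} transfers to this setting with fat trees replaced by scattered linear orderings obtained by iterated $\w$-sums, so the rank calculus of Proposition \ref{prop_calculate_ranks} and the hardness argument of Lemma \ref{lem_hardness_ce} go through with only cosmetic changes.

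Next I would define a linear-ordering jump-inversion $G\mapsto G^{\textup{LO},-\alpha}$: enumerate all ordered pairs of vertices of $G$ in a fixed computable order and form
\[ G^{\textup{LO},-\alpha} \;=\; \sum_{(i,j)} \bigl( M_{i,j} + B_{i,j}\bigr), \]
where $B_{i,j}$ is a copy of $\LL_{\alpha,1}$ if $(v_i,v_j)$ is an edge of $G$ and of $\LL_{\alpha,0}$ otherwise, and $M_{i,j}$ is a rigid computable finite linear ordering whose order type encodes $(i,j)$ and cannot be confused with any initial or final segment of the content blocks. The linear ordering $\LL$ is then $\sum_{\alpha<\delta^*}(N_\alpha + G_{\emptyset_{(2\alpha)}}^{\textup{LO},-\alpha})$, where $N_\alpha$ is another rigid marker distinguishing the $\alpha$-th component. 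With all the pieces in place, the spectrum calculation is exactly the one following Theorem \ref{thm_jump_inversion}: a hyperarithmetic oracle fails to compute some $\LL_\alpha$ for a computable $\alpha$, whereas a non-hyperarithmetic oracle $X$ computably recovers each $\LL_\alpha$ via the linear-ordering version of Proposition \ref{prop_jump_inversion_part_three}, and the nonstandard components all collapse to a single computable tail.

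The main obstacle is the marker design. Linear orderings expose all their order-type information, so an over-generous marker scheme risks either leaking the edge relation of $G_{\emptyset_{(2\alpha)}}$ to a hyperarithmetic oracle (breaking the lower-bound half of the spectrum argument) or preventing non-hyperarithmetic oracles from locating the content blocks (breaking the upper-bound half). The right balance is to make the $M_{i,j}$ rigid and computable while keeping $\LL_{\alpha,0}$ and $\LL_{\alpha,1}$ indistinguishable below the $2\alpha$-jump. Once this balance is struck, the nonstandard collapse $\LL_{\alpha,0}\cong\LL_{\alpha,1}\cong\LL_\infty$ ensures that the nonstandard components of $\LL$ are computable and independent of $X$, just as in the structure case, and the proof of Theorem \ref{main_theorem} carries over verbatim.
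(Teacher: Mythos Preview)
Your plan diverges from the paper's at the very first step, and the places you flag as obstacles are exactly the places where your outline is not yet a proof.

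The paper does \emph{not} try to port the pair-of-trees machinery or the graph encoding to linear orderings. Instead it invokes two ready-made theorems for linear orderings: the Frolov--Harizanov--Kalimullin--Kudinov--Miller ordering $\LL_Y$, which plays the role of the Wehner graph (for $X\ge_\Tur Y$, $X$ computes a copy of $\LL_Y$ iff $X''>_\Tur Y''$), and Ash's uniform jump-inversion theorem for linear orderings ($\LL$ has an $X_{(2\alpha)}$-computable copy iff $\omega^\alpha\cdot\LL$ has an $X$-computable copy). The target is simply
\[
\KK=\sum_{\alpha<\omck}\omega^\alpha\cdot\LL_{\emptyset_{(2\alpha)}}\;+\;\omck\cdot\Rat,
\]
with no markers whatsoever: the summands are isolated non-uniformly via their least elements, and the nonstandard tail collapses because $\omega^\delta\cdot\LL\cong\omck\cdot\Rat$ for any ill-founded $\delta$ with well-founded part $\omck$ and any $\LL$. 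A subtlety absent from your sketch is that Ash's theorem must be overspilled relative to each oracle $X$ (its uniform version quantifies over $X$, a $\Pi^1_1$ statement), so the paper splits into the cases $\omega_1^X=\omck$ and $\omega_1^X>\omck$; this is why the nonstandard summand must be independent of $\alpha$ as well as of the content, unlike in the structure case.

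Your route has a genuine gap beyond being underspecified. The operation $G\mapsto G^{\textup{LO},-\alpha}=\sum_{(i,j)}(M_{i,j}+B_{i,j})$ with markers $M_{i,j}$ encoding the pair $(i,j)$ is \emph{not isomorphism-invariant}: two isomorphic presentations of $G$ with different vertex enumerations yield sums in which the same marker sits next to blocks of different types, so the resulting linear orderings need not be isomorphic. (If instead you make all markers identical, the sum forgets the graph structure entirely, since the Wehner graph has infinitely many edges and non-edges.) This is not a cosmetic issue---it is precisely why the paper never tries to pass a graph through a linear-ordering encoding, and instead uses FHKKM to produce the diagonalising object inside the class of linear orderings from the start. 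Your claim that the rank calculus of Proposition~\ref{prop_calculate_ranks} transfers ``with only cosmetic changes'' is likewise unsupported: the $\supr$ and $\mini$ operations on trees have no direct linear-ordering analogues with the same rank behaviour, and the GHKMMS pairs you cite were obtained via Ash's metatheorem, not by a construction you can simply overspill.
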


Using the notation of the previous section, we make use of the following relativisation of a uniform version of a Theorem of Ash's (this is Theorem 18.15 of \cite{AK00}). Below, for $X\in 2^\w$ and an $X$-computable ordinal $\delta$, we again identify $\delta$ with some $X$-notation for $\delta$, from which we also derive an $X$-computable well-ordering of $\w$ of order-type $\delta$. 

\begin{theorem} \label{thm_Ash_LO}
	Let $X\in 2^\w$. For any linear ordering $\LL$ and any $\alpha<\w_1^X$, $\LL$ has an $X_{(2\alpha)}$-computable copy if and only if $\w^\alpha\cdot \LL$ has an $X$-computable copy. 
	
	This is uniform in $\alpha$: let $\delta<\w_1^X$ be an $X$-computable ordinal. Suppose that $\Phi$ is a Turing functional such that for all $\alpha<\delta$, $\LL_\alpha = \Phi(X_{(2\alpha)},\alpha)$ is a linear ordering. Then there is a Turing functional $\Psi$ such that for all $\alpha<\delta$, $\Psi(X,\alpha)$ is a linear ordering isomorphic to $\w^{\alpha}\cdot \LL_\alpha$. 
\end{theorem}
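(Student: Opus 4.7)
My approach would prove the two directions of the equivalence separately, with the forward direction (from an $X_{(2\alpha)}$-copy of $\LL$ to an $X$-copy of $\w^\alpha\cdot\LL$) carrying essentially all the content of Ash's metatheorem.

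For the backward direction, given an $X$-computable copy $\KK$ of $\w^\alpha\cdot\LL$, I would recover $\LL$ by identifying the blocks of $\KK$: each element of $\LL$ corresponds to a maximal ``$\w^\alpha$-block'' in $\KK$. Define an equivalence $a\sim b$ on $\KK$ to hold iff the well-founded initial segment (from $\min(a,b)$) of the closed interval $[\min(a,b),\max(a,b)]$ has order type strictly below $\w^\alpha$. Because $\w^\alpha$ is additively principal ($\beta+\w^\alpha=\w^\alpha$ for $\beta<\w^\alpha$), this captures ``same block'': inside a block the interval is a well-ordering of type below $\w^\alpha$; across blocks the initial segment from $a$ contains the rest of $a$'s block, which has type $\w^\alpha$. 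The complexity of ``an $X$-computable linear ordering has well-founded initial segment of order type below $\w^\alpha$'' is $\Sigma^0_{2\alpha}(X)$, by exactly the rank-computation argument of Proposition~\ref{prop_jump_inversion_part_one} applied to the Kleene-Brouwer tree of descending sequences. Hence $X_{(2\alpha)}$ decides $\sim$ and can enumerate a transversal, giving the desired copy of $\LL$.

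For the forward direction, proceed by transfinite recursion on an $X$-notation for $\alpha$. At a successor $\alpha=\beta+1$, combine the inductive hypothesis (applied with $X$ replaced by $X''$) to obtain an $X''$-computable copy of $\w^\beta\cdot\LL$, with the base case $\alpha=1$: given an $X''$-computable linear ordering $\MM$, use the Limit Lemma to get an $X'$-approximation $\seq{\MM_s}$, and $X$-computably build $\w\cdot\MM$ by maintaining an $\w$-chain above every currently-present element and absorbing approximation changes by extending chains at their tops (using $\w+n=\w$). At a limit $\alpha$, organise the construction along a priority tree indexed by an $X$-computable well-ordering of length $\alpha$; guesses at a node of level $\beta<\alpha$ concern the $\Pi^0_{2\beta+1}(X)$-fragment of $\LL$'s diagram, and any revision is locally repaired by inserting into a partially-built block a chunk of order type below $\w^\alpha$, which the final block will absorb by additive principality.

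The hard part is the limit stage, which is exactly Ash's $\alpha$-systems metatheorem. The delicate bookkeeping ensures that each intended block eventually stabilises to order type exactly $\w^\alpha$, which requires careful coordination of revisions across levels of the priority tree and careful verification that the eventual isomorphism type of each block is not disturbed by the small insertions made at intermediate stages. Uniformity in $\alpha$ (the ``moreover'' clause) falls out by inspection of the recursion: every decision is effective in $X$ and in the $X$-notation for $\alpha$, with the $X_{(2\alpha)}$-oracle entering only through the given functional $\Phi$ producing $\LL_\alpha$.
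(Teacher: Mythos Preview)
The paper does not give its own proof of this statement: it is quoted as a relativised, uniform form of Theorem~18.15 of Ash--Knight \cite{AK00} and used as a black box in the construction of the linear ordering~$\KK$. So there is no in-paper argument to compare your sketch against.

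Your outline is broadly in the spirit of how Ash's theorem is actually proved, and you correctly identify the forward direction---specifically the $\alpha$-systems machinery---as carrying essentially all the content. One genuine issue, however, is worth naming. Your proposed reduction at a successor $\alpha=\beta+1$ (apply the inductive hypothesis with $X$ replaced by~$X''$, then handle one more double jump by the $\alpha=1$ trick) does not go through once $\beta$ is infinite. With the adjusted jump one has, for infinite~$\beta$,
\[
(X'')_{(2\beta)} \;=\; (X'')^{(2\beta+1)} \;=\; X^{(2+2\beta+1)} \;=\; X^{(2\beta+1)} \;=\; X_{(2\beta)},
\]
since $2+2\beta=2\beta$; so the hypothesis delivered by induction is that $\LL$ is $X_{(2\beta)}$-computable, two jumps short of the $X_{(2\alpha)}=X_{(2\beta+2)}$ you actually have. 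This is not a bookkeeping slip that can be patched locally: it is exactly why Ash's proof is organised as a single $\alpha$-system priority construction rather than as a transfinite induction along~$\alpha$, and why the ``limit-stage difficulty'' you flag is really the entire forward direction, not just a boundary case.
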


We also make use of a result of Frolov, Harizanov, Kalimullin, Kudinov and Miller from \cite{FHKKM}. They combined coding families of sets in a shuffle sum of linear orderings with a result of Wehner's (similar to the one we used above) to show that there is a linear ordering whose degree spectrum is the collection of nonlow$_2$ degrees.

\begin{theorem}\label{thm_FHKKM}
	For every set $Y$ there is a linear ordering $\LL_Y$ such that for all $X\ge_\Tur Y$, $X$ computes a copy of $\LL_Y$ if and only if $X''>_\Tur Y''$. 
	
	This is uniform in $Y$: there is a Turing functional $\Theta$ such that for all $Y$ and all $X$ such that $X''>_\Tur Y''$, $\Theta(X,Y)\cong \LL_Y$. 
\end{theorem}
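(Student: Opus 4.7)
The plan is to combine a relativised Wehner construction, a shuffle-sum coding of families of finite sets into linear orderings, and Theorem~\ref{thm_Ash_LO} in order to calibrate the Turing-jump levels to the hypothesis $X''>_\Tur Y''$.

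Uniformly in $Y$, define the Wehner family relative to $Y''$ by
\[
\F_Y \,=\, \bigl\{\,\{e\}\oplus F \,:\, e<\om,\ F\subseteq\om\text{ finite},\ F\neq W_e^{Y''}\,\bigr\}.
\]
Relativising Propositions~\ref{prop_Wehner_family_diagonalizes} and~\ref{prop_uniform_Wehner_family} to the oracle $Y''$: no set reducible to $Y''$ enumerates $\F_Y$, while any $Z\nle_\Tur Y''$ uniformly enumerates $\F_Y$. Consequently, for $X\geq_\Tur Y$ (so $X''\geq_\Tur Y''$), the statement ``$X''$ enumerates $\F_Y$'' is equivalent to ``$X''\nle_\Tur Y''$'', hence to ``$X''>_\Tur Y''$''. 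The target biconditional has thus been reduced to finding a linear ordering $\LL_Y$ for which $X$ computes a copy if and only if $X''$ enumerates $\F_Y$.

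Next, I would code $\F_Y$ as a linear ordering by a shuffle-sum construction. To each finite $F\subseteq\om$ associate a canonical finite linear ordering $L_F$ equipped with self-identifying endpoint markers (for instance, short gadgets flanking each block whose presence is recognisable directly from the successor relation). Let $\MM_Y$ be the dense shuffle sum $\sigma\{L_F:F\in\F_Y\}$. With the markers in place, one obtains the analogue of Lemma~\ref{lem_flower_grpahs_and_enumerations} for linear orderings: for any countable family $\F$, $Z$ computes a copy of $\sigma\{L_F:F\in\F\}$ if and only if $Z$ can enumerate $\F$, and this equivalence is uniform. Finally, set $\LL_Y=\om\cdot\MM_Y$. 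By Theorem~\ref{thm_Ash_LO} applied with $\alpha=1$, $\LL_Y$ has an $X$-computable copy iff $\MM_Y$ has an $X''$-computable copy, iff $X''$ enumerates $\F_Y$, iff (for $X\geq_\Tur Y$) $X''>_\Tur Y''$. For uniformity, compose three uniform operators: the Wehner enumerator (which, given $X$ with $X''>_\Tur Y''$, yields a uniform $X''$-c.e.\ enumeration of $\F_Y$), the shuffle-sum operator (producing from any $Z$-c.e.\ enumeration of $\F$ a $Z$-computable copy of $\sigma\{L_F:F\in\F\}$), and the uniform functional from Theorem~\ref{thm_Ash_LO} with $\alpha=1$ (converting an $X''$-computable copy of $\MM_Y$ into an $X$-computable copy of $\om\cdot\MM_Y=\LL_Y$). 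This composition yields the desired $\Theta$.

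The main obstacle is the shuffle-sum step. In the standard shuffle sum of finite linear orderings, a $Z$-computable copy yields only a $\Sigma^0_2(Z)$ enumeration of the block types, because recognising that a finite block is \emph{complete} between two limit points is a $\Pi^0_2(Z)$ question. To obtain the clean equivalence ``$Z$-computable copy iff $Z$-c.e.\ enumeration of $\F$'' (which is precisely what makes the Ash step with $\alpha=1$ deliver $X''$-enumerability of $\F_Y$ rather than $X'''$-enumerability), the canonical codes $L_F$ must be refined so that complete-block identification becomes $\Sigma^0_1(Z)$. Designing such self-identifying codes is the technical heart of the Frolov--Harizanov--Kalimullin--Kudinov--Miller argument, and is the part of the proof where one genuinely uses features of linear orderings beyond those shared with arbitrary structures.
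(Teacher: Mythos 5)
First, note that the paper does not prove this theorem at all: it is imported from Frolov, Harizanov, Kalimullin, Kudinov and Miller \cite{FHKKM}, and the paper only describes their argument as ``coding families of sets in a shuffle sum of linear orderings'' combined with a Wehner-type family. Your ingredients are the right ones, but your decomposition has a genuine gap at its central step. You need the lemma: for any countable family $\F$ of finite sets, a set $Z$ computes a copy of the shuffle sum of the codes $L_F$ ($F\in\F$) if and only if $Z$ can enumerate $\F$, uniformly. This jump-free analogue of Lemma \ref{lem_flower_grpahs_and_enumerations} for linear orderings is not ``the technical heart of FHKKM'' as you assert; it is not available, and in the generality you state it would settle an open problem. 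In a linear ordering the atomic diagram gives only the ordering: adjacency is $\Pi^0_1$, and recognising that a finite block (or any endpoint ``marker'' gadget expressible in this language) is \emph{complete}, i.e.\ bounded by limit points, is $\Pi^0_2$; there is no configuration whose presence is a $\Sigma^0_1$ event in the way a cycle through a vertex is in a graph, so reading off the block contents from a $Z$-computable copy costs two jumps, not zero. Concretely, if your lemma held (uniformly, hence relativisably), applying it to the unrelativised Wehner family $\FF_\emptyset$ would produce a linear ordering whose degree spectrum is exactly the noncomputable degrees --- precisely the question this paper lists as open, whether the Slaman--Wehner theorem can be witnessed by a linear ordering.

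The calibration in the actual FHKKM argument is the opposite of yours: the two jumps in the statement come from the shuffle-sum coding itself, not from Theorem \ref{thm_Ash_LO}. One takes $\LL_Y$ to be the shuffle sum coding the Wehner family relative to $Y''$ directly: from an $X$-computable copy one extracts the block sizes c.e.\ in $X''$ (one jump for adjacency, one for maximality of blocks), and conversely from an $X''$-enumeration of the family one builds an $X$-computable copy, exploiting the special structure of the Wehner family (for each $e$ it contains \emph{every} finite set different from $W_e^{Y''}$, so a block whose intended target changes during the approximation can always be grown towards another member of the family); no factor $\om\cdot$ and no application of Theorem \ref{thm_Ash_LO} at $\alpha=1$ is needed. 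Indeed, if you replace your coding lemma by what is actually provable (a $Z$-computable copy yields only an enumeration computable from $Z''$), your composition with the Ash step relates $X$-computable copies of $\om\cdot\MM_Y$ to enumerations of $\F_Y$ from $(X'')''$, which is the wrong level. The surrounding reductions in your first paragraph (that, for $X\ge_\Tur Y$, ``$X''$ enumerates the Wehner family relative to $Y''$'' is equivalent to $X''>_\Tur Y''$, with uniformity handled by giving $\Theta$ both oracles) are fine; the missing piece is exactly the two-jump coding of a family into a linear ordering, which cannot be replaced by the jump-free coding you posit.
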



Because adding an extremal point does not change the degree spectrum of a linear ordering, we may assume that every linear ordering $\LL_Y$ has a least element.

\

The ``ill-founded limit'' of the linear orderings $\w^\alpha\cdot \LL$ is $\omck\cdot \Rat$, the linear ordering which contains densely many copies of $\omck$. To see this, we need to use a generalisation of ordinal exponentiation $\w^\alpha$ to ill-founded linear orders. For any linear order $\LL$, the linear ordering $\w^{\LL}$ is the collection of all functions $f\colon \LL\to \w$ which take the value 0 on all but finitely many inputs, ordered by a ``lexicographic'' ordering: $f<_{\w^{\LL}}g$ if for the $\LL$-least $x$ such that $f(x)\ne g(x)$ we have $f(x)<g(x)$ (in $\w$, of course). That this is indeed a generalisation of taking ordinal powers of $\w$ can be seen by considering an inductive definition of a linear ordering (of order-type) $\w^\beta$ for ordinals $\beta$, using a directed system of linear orderings. At the successor step we let $\w^{\beta+1}= \w^\beta\cdot \w$, with the embedding taking $\w^\beta$ to the leftmost copy $\w^\beta\times \{0\}$ of $\w^\beta$ in $\w^{\beta}\cdot \w$; at limit stages we take direct limits. The direct construction of the linear ordering $\w^{\LL}$ shows that a power rule holds: for linear orderings $\LL$ and $\KK$, $\w^{\LL+\KK}\cong \w^\LL\cdot \w^{\KK}$. 

\begin{proposition}\label{prop_ill_founded_LO}
	Let $\delta$ be an ill-founded linear ordering whose maximal well-founded initial segment has order-type $\omck$. Let $\LL$ be an countable linear ordering. Then $\w^\delta\cdot \LL$ is isomorphic to $\omck\cdot \Rat$. 
\end{proposition}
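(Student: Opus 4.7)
My plan is to reduce $\w^\delta\cdot\LL$ to $\omck\cdot\Rat$ using the power rule for the generalised exponential together with Cantor's characterisation of countable dense linear orderings.

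First, write $\delta=\omck+\delta'$, where $\delta'$ consists of the elements of $\delta$ lying above the maximal well-founded initial segment. Applying the power rule $\w^{\LL+\KK}\cong \w^\LL\cdot\w^\KK$ gives $\w^\delta\cong \w^\omck\cdot\w^{\delta'}$. Since $\omck$ is admissible and hence closed under $\alpha\mapsto \w^\alpha$, ordinary ordinal exponentiation yields $\w^\omck=\sup_{\alpha<\omck}\w^\alpha=\omck$, so $\w^\delta\cong \omck\cdot\w^{\delta'}$.

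Next, I would identify $\w^{\delta'}$ with $\Rat$. Two key observations about $\delta'$ are essential. First, $\delta'$ has no minimum: if it had a least element $y$, then $\omck\cup\{y\}$ would be a well-founded initial segment of $\delta$ properly extending $\omck$, contradicting maximality. Second, $\delta'$ has no maximum (by the choice of $\delta$, or else the maximum could be absorbed without effect). Using these, I would verify that $\w^{\delta'}$ is countable, dense, and without endpoints: given $f<g$ with $x^*$ the greatest position of disagreement, one constructs an intermediate element either by incrementing at $x^*$ (if $g(x^*)>f(x^*)+1$) or by adding a perturbation at some $y<x^*$ in $\delta'$, which exists precisely because $\delta'$ has no minimum; the absence of endpoints follows by similar perturbations. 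Cantor's theorem then yields $\w^{\delta'}\cong\Rat$.

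Finally, by associativity $\w^\delta\cdot\LL\cong\omck\cdot\Rat\cdot\LL\cong\omck\cdot(\Rat\cdot\LL)$. For any countable nonempty linear ordering $\LL$, the order $\Rat\cdot\LL$ is a countable dense linear ordering without endpoints — endpointlessness is inherited from $\Rat$ regardless of whether $\LL$ has endpoints, and density is immediate — so $\Rat\cdot\LL\cong\Rat$ by Cantor, and we conclude $\w^\delta\cdot\LL\cong\omck\cdot\Rat$.

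The main obstacle will be endpointlessness of $\w^{\delta'}$: under the literal Hahn-style definition the zero function is a minimum, so the argument must either interpret the construction as the ``positive'' part of the Hahn series (excluding the zero function) or argue that this additional extremal point is absorbed once the $\omck$-factor on the left and the $\LL$-factor on the right are applied. Either way, the algebraic manipulations via the power rule are routine once the correct reading of the construction is fixed.
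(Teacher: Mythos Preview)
Your approach is essentially the paper's: decompose $\delta$ into its well-founded initial segment $C\cong\omck$ and the tail $\delta-C$, apply the power rule, use $\w^{\omck}=\omck$, and invoke Cantor once the remaining factor is seen to be countable dense without endpoints. The only difference is cosmetic grouping---the paper shows $\w^{\delta-C}\cdot\LL\cong\Rat$ in one step, whereas you first argue $\w^{\delta'}\cong\Rat$ and then use $\Rat\cdot\LL\cong\Rat$.

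Two remarks. Your concern that $\delta'$ might have a maximum is unnecessary: $\w^{\delta'}$ has no greatest element regardless, since for any $f$ one can increase the value at a coordinate at or above the support of $f$. On the other hand, the endpoint obstacle you flag at the end is genuine, and the paper glosses over exactly the same point: it asserts that $\w^{\delta-C}$ has no endpoints, but under the literal definition the zero function is a least element, so $\w^{\delta-C}$ is really $1+\Rat$ rather than $\Rat$. This is harmless in the paper's application, because there the orders $\w^\alpha\cdot\LL_\alpha$ for nonstandard $\alpha$ are summed over $\delta^*\setminus\omck$, which has no least element, and the stray initial copy of $\omck$ is absorbed in that sum; but your instinct that this step needs care is correct.
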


\begin{proof}
	Let $C$ be the maximal well-founded initial segment of $\delta$; so $\delta-C$ has no least element. This means that $\w^{\delta-C}$ is dense (with no endpoints). This, in turn, means that $\w^{\delta-C}\cdot \LL$ is also dense, and so is isomorphic to the rationals. Since $\omck = \w^{\omck}$, 
	\[ \w^\delta\cdot \LL \,\cong\, \w^{C}\cdot \w^{\delta-C}\cdot \LL \,\cong\, \w^{\omck}\cdot \Rat \,\cong\, \omck\cdot\Rat. \qedhere\]
\end{proof}

Armed with Theorems \ref{thm_Ash_LO} and \ref{thm_FHKKM}, we give a proof of Theorem \ref{thm_LO}. 
The linear ordering in question is the sum
\[ \KK =  \sum_{\alpha<\omck}  \w^\alpha\cdot \LL_{\emptyset_{(2\alpha)}}  \,\, + \,\,  \omck\cdot\Rat.\]

Because $\LL_{\emptyset_{(2\alpha)}}$ has a least element, so does $\w^\alpha\cdot \LL_{\emptyset_{(2\alpha)}}$. This means that if a set $X$ computes a copy of $\KK$, then it computes a copy of $\w^\alpha\cdot \LL_{\emptyset_{(2\alpha)}}$ for all $\alpha<\omck$. In turn, this means that for all $\alpha<\omck$, $X_{(2\alpha)}$ computes a copy of $\LL_{\emptyset_{(2\alpha)}}$, which means that $X_{(2\alpha+2)}>_\Tur \emptyset_{(2\alpha+2)}$. As we have seen in the previous section, it follows that $X$ cannot be hyperarithmetic: if $X\le_\Tur \emptyset_{(\beta)}$, then for all $\alpha>\beta\cdot \w$ we get $X_{(\alpha)} \equiv_\Tur \emptyset_{(\alpha)}$. 

\

It remains to show, then, that any nonhyperarithmetic set can compute a copy of $\KK$. This is done slightly differently to the way we argued in the previous section. Here we will see that it is important that the sum $\KK$ is unlabelled. In the previous section it was not important that for nonstandard $\alpha$, the graph $G^{-\alpha}$ did not depend on $\alpha$; it was just important that it did not depend on $G$. Here it is important that for nonstandard $\alpha$, the linear ordering $\w^{\alpha}\cdot \LL$ depends on neither $\LL$ nor $\alpha$. 

The reason for this is that we cannot argue, as we did in the previous section, for all nonhyperarithmetic sets at once. We do not produce a single functional which outputs a copy of $\KK$ given any nonhyperarithmetic set. This is because we use a stronger form of overspill to stretch Ash's theorem beyond $\omck$. We cannot obtain it for all sets $X$ at once, as that is a $\Pi^1_1$ statement. We stretch Ash's theorem for each $X$ separately, and this means that we have to treat two distinct cases: $\w_1^X = \omck$ and $\w_1^X>\omck$.

\medskip

Let $X$ be a nonhyperarithmetic set. If $\w_1^X = \omck$, then an application of the overspill principle (equivalently, working in an ill-founded $\w$-model of set theory $H$ which contains $X$ and omit $\omck$) yields a nonstandard $X$-computable ordinal $\delta^*$ whose maximal well-founded initial segment has order-type $\omck$, which supports a jump hierarchy and for which Theorem \ref{thm_Ash_LO} holds: 
\begin{itemize}
	\item For all $Y\le_\Tur X$ there are sets $\seq{Y_{(\alpha)}}$ for $\alpha<\delta^*$ which obey the recursive definition of a transfinite iteration of the Turing jump; and
	\item If $\Phi$ is a Turing functional such that for all $\alpha<\delta^*$, $\Phi(X_{(2\alpha)}, \alpha)$ is a linear ordering $\LL_\alpha$, then there is a Turing functional $\Psi$ such that for all $\alpha<\delta^*$, $\Psi(X,\alpha)$ is isomorphic to $\w^{\alpha}\cdot \LL_\alpha$. 
\end{itemize}

In our application, we use $\LL_\alpha = \Theta(X_{(2\alpha)}, \emptyset_{(2\alpha)})$, where $\Theta$ is given by Theorem \ref{thm_FHKKM}. For standard $\alpha<\omck$, because $X$ is not hyperarithmetic, $X_{(2\alpha+2)}>_\Tur \emptyset_{(2\alpha+2)}$, and so for standard $\alpha$ we get $\LL_\alpha\cong \LL_{\emptyset_{(2\alpha)}}$. Because $\emptyset_{(2\alpha)}$ is computable from $X_{(2\alpha)}$ (even for nonstandard $\alpha$), uniformly in $\alpha$, we see that there is indeed a Turing functional $\Phi$ such that for all $\alpha<\delta^*$, $\Phi(X_{(2\alpha)},\alpha) = \LL_\alpha$. Using the functional $\Phi$ obtained from $\Psi$, we get, uniformly in $\alpha<\delta$ with oracle $X$, a copy $\Psi(X,\alpha)$ of $\w^\alpha\cdot \LL_\alpha$. In this way,
\[ \KK(X) = \sum_{\alpha<\delta^*}  \Psi(X,\alpha)  \]
is computable from $X$. It is easy to show that $\KK(X)$ is isomorphic to $\KK$: for standard $\alpha<\omck$ we have $\Psi(X,\alpha)\cong \w^\alpha\cdot \LL_{\emptyset_{(2\alpha)}}$, for nonstandard $\alpha \in \delta^*\setminus \omck$ we have $\Psi(X,\alpha)\cong \omck\cdot \Rat$ (Proposition \ref{prop_ill_founded_LO}), and any sum of copies of $\omck\cdot\Rat$ is again isomorphic to $\omck\cdot\Rat$.

\medskip

Now suppose that $\w_1^X>\omck$. We can then fix an $X$-computable copy of $\omck$, which we naturally also call $\omck$. The argument is now simpler. Applying Theorem \ref{thm_Ash_LO} to $\omck$, the argument above, using the fact that $X$ is not hyperarithmetic, gives us, uniformly in $\alpha<\omck$, an $X$-computable copy of $\w^\alpha\cdot \LL_{\emptyset_{(2\alpha)}}$, and so $X$ computes a copy of $\sum_{\alpha<\omck}\w^\alpha\cdot\LL_{\emptyset_{(2\alpha)}}$. Because $X$ computes a copy of $\omck$ and of course computes $\Rat$, it also computes a copy of $\omck\cdot\Rat$. Putting these together, we see that $X$ computes a copy of $\KK$. This completes the proof of Theorem \ref{thm_LO}.

\

\

\section{Distinguishing category and measure} \label{sec_separating}

\begin{theorem}\label{thm_null_co_meagre}
	There is a structure whose degree spectrum is null and co-meagre. 
\end{theorem}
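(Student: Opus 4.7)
The plan is to exhibit a structure whose degree spectrum is an upward-closed class of Turing degrees that is simultaneously null and co-meagre. Rather than attempting to determine the spectrum exactly, I aim for a sufficient condition: a structure $\MM$ whose spectrum contains every $1$-generic degree (giving co-meagreness, since the $1$-generic reals form a co-meagre subset of $2^\w$) and excludes every $1$-random degree (giving nullness, since the $1$-randoms are conull). Upward closure of spectra is automatic, so this is enough.

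To construct such a structure, I would adapt the Wehner bouquet-graph construction used throughout the paper. The idea is to define a countable family $\F$ of finite subsets of $\w$, parameterized by Cohen forcing conditions, with two properties: (i) every $1$-generic set $X$ enumerates $\F$, by using $X$ to select canonical witnesses corresponding to the forcing conditions it meets; and (ii) no $1$-random $R$ enumerates $\F$, because a successful enumeration would force $R$ to compute a real meeting every c.e.\ dense open set, which no $1$-random can do. By Lemma \ref{lem_flower_grpahs_and_enumerations}, the bouquet graph $G(\F)$ has degree spectrum equal to the class of degrees that enumerate $\F$; by (i) and (ii), this spectrum is null and co-meagre.

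The main obstacle is engineering $\F$ so that both (i) and (ii) hold simultaneously. For (i), each $1$-generic should yield an enumeration by a density argument: one arranges that for every coding datum $(e,F,s)$ of the kind appearing in Proposition \ref{prop_uniform_Wehner_family}, the set of forcing conditions that support an enumeration past stage $s$ is dense in Cohen forcing, so a generic always meets it. For (ii), I would embed into $\F$ a diagonalising template analogous to the one in Proposition \ref{prop_Wehner_family_diagonalizes}, calibrated so that any enumeration of $\F$ effectively produces a real that meets every c.e.\ dense open set; this would force any enumerator of $\F$ to compute a $1$-generic, contradicting the classical fact that $1$-randoms compute no $1$-generic (so the class of enumerators is disjoint from the conull set of $1$-randoms). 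The technical heart of the argument is thus a simultaneous forcing-and-diagonalisation construction that marries Wehner's family with the canonical tree of Cohen forcing, and I expect the subtlest step to be arranging the diagonalisation so that it is fired only by non-generic enumerators, without obstructing the enumerations produced by generics.
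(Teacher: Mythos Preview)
Your plan cannot work as stated, because the two target conditions are incompatible for any upward-closed class of degrees. By a theorem of Kurtz, Lebesgue-almost every real computes a $1$-generic; in particular every $2$-random computes a $1$-generic. Hence any upward-closed class containing all $1$-generic degrees also contains all $2$-random degrees and is therefore conull, not null. The specific step in your part (ii) --- ``any enumerator of $\F$ computes a $1$-generic, and no $1$-random computes a $1$-generic'' --- rests on a false premise: the non-computation goes the other way ($1$-generics do not compute $1$-randoms, but sufficiently random reals do compute $1$-generics). So no family $\F$ with your property (i) can also have your property (ii).

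The paper sidesteps this obstruction by not trying to capture every $1$-generic. It quotes a theorem of Shinoda and Slaman: from any $\Pi^0_2(\cero')$ co-meagre class $\mathcal C$ presented as an intersection of uniformly $\Sigma^0_2$ dense open classes, one can extract a co-meagre $\mathcal D\subseteq\mathcal C$ whose upward closure is a degree spectrum. It then applies this with $\mathcal C$ the class of pb-generic sets of Downey, Jockusch and Stob, whose upward closure is exactly the array noncomputable degrees. The crucial fact is that the array noncomputable degrees form an upward-closed class that is simultaneously co-meagre and \emph{null}; so the resulting spectrum, which is co-meagre and sits inside the array noncomputable degrees, is automatically null. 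The moral is that to obtain a null co-meagre upward-closed class one must work with a notion like array noncomputability that is orthogonal to randomness, rather than with $1$-genericity, whose upward closure is already conull.
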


\begin{proof}
	In \cite{ShinodaSlaman}, Shinoda and Slaman show that if $\mathcal C$ is a $\Pi^0_2(\cero')$ co-meagre class defined as the intersection of uniformly $\Sigma^0_2$ dense open classes, then there is a co-meagre class $\mathcal D\subseteq \mathcal C$ such that the upward closure of $\mathcal D$ in the Turing degrees is a degree spectrum. In \cite{Downey_Jockusch_Stob:_array_recursive_2}, Downey, Jockusch and Stob show that there is a $\Pi^0_2(\cero')$ co-meagre class as above: the collection of \emph{pb-generic} sets, whose upward closure in the Turing degrees is the collection of array noncomputable degrees. Hence there is a co-meagre degree spectrum contained in the array non-computable degrees. The theorem follows from the fact that the collection of array noncomputable degrees is null.
\end{proof}

\

In the rest of this section, we prove the following:

\begin{theorem}\label{thm_meagre_co_null}
	There is a structure whose degree spectrum is meagre and co-null. 
\end{theorem}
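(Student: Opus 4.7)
The plan is to mirror the proof of Theorem~\ref{thm_null_co_meagre}, exchanging the roles of Baire category and Lebesgue measure throughout. In that theorem one extracts a co-meagre degree spectrum from a co-meagre class which happens to be null (the pb-generic / array non-computable reals). Here I will instead extract a co-null degree spectrum from a co-null class which happens to be meagre, so that the upward closure inherits both properties.

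The first step is to establish, or to invoke, a measure-theoretic analog of the Shinoda--Slaman theorem: given a suitably presented co-null class $\mathcal{C}\subseteq 2^{\om}$ (for instance, one defined as the intersection of a uniformly computable sequence of $\Sigma^0_2$ classes of measure tending to one), there is a co-null subclass $\mathcal{D}\subseteq\mathcal{C}$ whose upward closure in the Turing degrees is the degree spectrum of some structure $\MM$. The construction should go through essentially as in the category setting, with the Borel--Cantelli lemma replacing the Baire category theorem in the forcing argument that meets the diagonalization requirements. I then take $\mathcal{C}$ to be the class $\textup{MLR}$ of Martin--L\"of random reals (passing to $2$-randoms if the definability calculation forces us up one level). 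This class has full measure and is meagre: writing $\textup{MLR}=\bigcup_n(2^{\om}\setminus U_n)$ for a universal Martin--L\"of test $(U_n)$, each $2^{\om}\setminus U_n$ is a closed null set and hence nowhere dense.

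The output is a degree spectrum $\Spec(\MM)$ containing $\mathcal{D}$ and therefore co-null. To see that $\Spec(\MM)$ is also meagre, observe that every element of $\mathcal{D}$ is $1$-random, so every degree in $\Spec(\MM)$ computes a Martin--L\"of random real; by a theorem of Kurtz, no $1$-generic real computes a $1$-random, so the $1$-generics are disjoint from $\Spec(\MM)$, and since the $1$-generics form a comeagre class this forces $\Spec(\MM)$ to be meagre. The main obstacle will be the first step: one must verify the measure analog of the Shinoda--Slaman construction at the correct effective level for $\mathcal{C}$, and arrange that the co-null subclass $\mathcal{D}$ it produces still lies inside $\textup{MLR}$, since otherwise the appeal to Kurtz's theorem collapses.
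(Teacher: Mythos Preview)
Your high-level strategy is sensible, but the proposal has a genuine gap: the ``measure-theoretic analog of the Shinoda--Slaman theorem'' is the entire content of the result, and you have not supplied it. The assertion that ``the construction should go through essentially as in the category setting, with the Borel--Cantelli lemma replacing the Baire category theorem'' is not a proof. The Shinoda--Slaman argument is a Cohen-forcing construction in which density of conditions does all the work; there is no off-the-shelf substitution that turns it into a positive-measure argument, and in particular you have given no mechanism for producing a family of sets (or a structure) that every oracle in some large-measure class can present while generics cannot. The reduction ``every degree in $\Spec(\MM)$ computes a $1$-random, and no $1$-generic computes a $1$-random'' is fine for meagreness, but it only kicks in once the hard step is done.

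The paper does not proceed by abstracting a measure-theoretic Shinoda--Slaman principle. Instead it gives a direct, explicit construction of a countable family $\SSS$ (Theorem~\ref{thm_meagre_co_null_enumeration}) and then takes the bouquet graph $G(\SSS)$. The family is built as $\SSS=\bigoplus_{e,\s}\SSS_{e,\s}$, where for each pair $(e,\s)$ one simultaneously constructs a c.e.\ operator $\Lambda_{e,\s}$ and a $\Pi^0_1$ class $\PP_{e,\s}$ of measure at least $1-\epsilon_{e,\s}$ such that every $X\in\PP_{e,\s}$ enumerates $\SSS_{e,\s}$ via $\Lambda_{e,\s}$. Intersecting gives a non-null $\Pi^0_1$ class $\PP$ whose elements all enumerate $\SSS$; Ku\v{c}era's theorem then yields that every $1$-random enumerates $\SSS$, so the spectrum is co-null. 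Meagreness is obtained not via the Kurtz/Demuth--Ku\v{c}era fact you cite, but by arranging that for each $e$ there is a dense $\Pi^0_1$ set of strings forcing $\Phi_e(G)$ to miss $\SSS$, so that no $2$-generic enumerates $\SSS$. The point is that the diagonalization against generics and the measure-preservation for randoms are woven together in one finite-injury-style construction; this interleaving is exactly what your proposal leaves out.
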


Theorem \ref{thm_meagre_co_null} follows from applying Lemma \ref{lem_flower_grpahs_and_enumerations} to the family $\SSS$ given by the following theorem:

\begin{theorem}\label{thm_meagre_co_null_enumeration}
	There is a countable family $\SSS$ of subsets of $\Nat$ such that every 1-random set can enumerate $X$, but no 2-generic set can enumerate $\SSS$. 
\end{theorem}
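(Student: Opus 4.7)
The plan is to construct $\SSS$ by a staged combinatorial argument that interleaves two types of requirements; Lemma \ref{lem_flower_grpahs_and_enumerations} then converts $\SSS$ into a graph whose degree spectrum is precisely the set of oracles that enumerate $\SSS$. The 1-random requirements guarantee that the finished family is enumerable from every 1-random oracle; the 2-generic requirements guarantee, via a Baire-category diagonalization, that no 2-generic oracle can enumerate $\SSS$ through any Turing functional.

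For the 1-random side, I would fix a universal Martin-L\"of test $\{U_n\}$ and partition $\SSS$ as a disjoint union $\SSS = \bigsqcup_n \SSS_n$ with a tagging marker $n$. Each subfamily $\SSS_n$ is designed to be uniformly c.e.\ from any sufficiently long finite binary string $\sigma$ witnessing $[\sigma] \not\subseteq U_n$ (for example, by a computable Wehner-style scheme parameterized by extensions of $\sigma$ that are observed at stage $|\sigma|$ not to be enumerated into the $n\tth$ level of the test). Since every 1-random $R$ satisfies $R \notin U_n$ for all but finitely many $n$, such $R$ uses its own initial segments as witnesses to enumerate the tail $\bigsqcup_{n \geq n_0} \SSS_n$, and enumerates the finitely many missing $\SSS_n$ with $n < n_0$ by brute force.

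For the 2-generic side, I would diagonalize against each Turing functional $\Phi_e$. For each index $e$, I produce a dense $\Sigma^0_2$ set of finite binary string conditions $\sigma$ such that any $G \supseteq \sigma$ has $\Phi_e^G$ either declaring as a section a finite set outside $\SSS$ or failing to eventually produce some required member of $\SSS$. Every 2-generic $G$ meets every such dense $\Sigma^0_2$ set, so $\Phi_e^G$ fails to enumerate $\SSS$ for all $e$.

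The main obstacle is designing each $\SSS_n$ to support both requirements simultaneously: it must be rich enough at each level to allow a dense $\Sigma^0_2$ diagonalization against every $\Phi_e$, yet extractable uniformly from arbitrary finite strings $\sigma$ with $[\sigma] \not\subseteq U_n$. Because the classes of 1-randoms and 2-generics are disjoint, there is no intrinsic conflict between the measure and category sides, but the stage-by-stage balancing of how many finite sets enter each $\SSS_n$ — typically organized on a tree of conditions — is the main technical content.
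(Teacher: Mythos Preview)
Your outline has the right shape, but there is a real gap on the 1-random side. If each $\SSS_n$ is ``uniformly c.e.\ from any sufficiently long finite string $\sigma$ with $[\sigma]\not\subseteq U_n$,'' then since such a $\sigma$ always exists (and the stage-$|\sigma|$ version you suggest is even a computable condition), $\SSS_n$ is outright c.e.; then every oracle enumerates all of $\SSS$ and the 2-generic diagonalisation is dead. If instead you mean that the enumeration procedure uses the \emph{infinite} oracle $R\supset\sigma$ (``Wehner-style''), then you owe an explanation of why different 1-randoms $R$ all produce the \emph{same} fixed family $\SSS_n$: Wehner's scheme is specifically designed to produce an oracle-dependent family. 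The ``brute force'' step for the finitely many $n<n_0$ with $R\in U_n$ has the same problem --- if $\SSS_n$ is not c.e., being 1-random gives $R$ no mechanism to enumerate it, and you have not said what that mechanism is.

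The paper sidesteps all of this by indexing the pieces not by Martin-L\"of level but by pairs $(e,\sigma)$, so that $\SSS_{e,\sigma}$ is built directly to defeat $\Phi_e$ on sufficiently generic $G\supset\sigma$. On the random side it never looks at initial segments of $R$: instead it constructs, for each $(e,\sigma)$, a c.e.\ operator $\Lambda_{e,\sigma}$ together with a $\Pi^0_1$ class $\PP_{e,\sigma}$ of measure $\ge 1-\epsilon_{e,\sigma}$ (with $\sum\epsilon_{e,\sigma}<1$) such that $\Lambda_{e,\sigma}(X)$ enumerates exactly $\SSS_{e,\sigma}$ for every $X\in\PP_{e,\sigma}$; Ku\v{c}era's theorem then gives that every 1-random computes a member of $\PP=\bigcap_{e,\sigma}\PP_{e,\sigma}$. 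The real content is making $\Lambda_{e,\sigma}(X)$ oracle-independent on $\PP_{e,\sigma}$: one partitions $\w$ into blocks $I_k$ of size $1/\delta_k$, searches for a witness $x_k\in I_k$ enumerated by some extension of the $k$th string above $\sigma$, and uses a clopen partition of $2^\w$ into pieces of measure $\delta_k$ so that each oracle first guesses one element of $I_k$ and later fills in $I_k\setminus\{x_k\}$ --- except for the measure-$\delta_k$ set $\CC_{x_k,k}$ of oracles that guessed $x_k$ itself, which are thrown out of $\PP_{e,\sigma}$. This ``guess with the oracle, then correct for all but a small-measure set'' mechanism is exactly what your sketch is missing.
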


To prove Theorem \ref{thm_meagre_co_null_enumeration}, let $\seq{\epsilon_{e,\s}}_{e<\w,\s\in 2^{<\w}}$ be a computable array of positive rational numbers whose sum $\sum_{e,\s} \epsilon_{e,\s}$ is smaller than 1. For every $e<\w$ and $\s\in 2^{<\w}$ we define a countable family $\SSS_{e,\s}$ and uniformly, a c.e.\ operator $\Lambda_{e,\s}$ and a $\Pi^0_1$ class $\PP_{e,\s}$ whose measure is at least $1-\epsilon_{e,\s}$ such that for all $X\in \PP_{e,\s}$, $\Lambda_{e,\s}(X)$ is an enumeration of $\SSS_{e,\s}$. We then let 
\[ \SSS = \bigoplus_{e,\s} \SSS_{e,\s} = \big\{ \{(e,\s)\}\oplus A  \,:\, A\in \SSS_{e,\s} \big\}.\]
The uniformity of the array $\Lambda_{e,\s}$ shows that every $X$ in the $\Pi^0_1$ class $\PP = \bigcap_{e,\s}\PP_{e,\s}$ can enumerate $\SSS$. The measure of $\PP$ is at least $1-\sum_{e,\s}\epsilon_{e,\s}$, and so $\PP$ is non-null. By Ku\v{c}era's \cite{Kucera:85}, every 1-random set is Turing equivalent to some element of~$\PP$, and so every 1-random set can enumerate $\SSS$.

Fix some $e$. We use the families $\SSS_{e,\s}$ to ensure that if $G$ is 2-generic, then $\Phi_e(G)$ is not an enumeration of $\SSS$; here $\Phi_e$ is the $e\tth$ c.e.\ operator. From $\Phi_e$ and $\s$ we can find a c.e.\ operator $\Psi_{e,\s}$ such that for any $X$, if $\Phi_e(X)$ is an enumeration of $\SSS$, then $\Psi_{e,\s}(X)$ is an element of $\SSS_{e,\s}$. We would like it to be the case that if $G$ is a generic set which extends $\s$, then $\Psi_{e,\s}(G)$ is not an element of $\SSS_{e,\s}$. We will not always be able to achieve this aim. We will be able to ensure that there is some $\tau$ extending $\s$ such that for a sufficiently generic set $G$ (1-generic will be enough), if $G$ extends $\tau$ then $\Psi_{e,\s}(G)\notin \SSS_{e,\s}$. As we do this for each $\s$, the collection of $\tau$ which force that $\Psi_{e,\s}(G)\notin \SSS_{e,\s}$ for \emph{some} $\s$ is dense, and so if $G$ is sufficiently generic (2-generic will suffice), $\Phi_e(G)$ cannot be an enumeration of $\SSS$. Hence the collection of oracles $X$ that can enumerate $\SSS$ is meagre.

Fixing $e$ and $\s$, let $\seq{\tau_k}_{k<\w}$ be an enumeration of all extensions of $\s$. Let $\delta_k = \delta_k(e,\s)$ be positive binary rational numbers such that $\sum_{k<\w} \delta_k < \epsilon_{e,\s}$; let $n_k = 1/\delta_k$, so $n_k$ is a power of 2. Partition $\w$ computably into sets $I_k$ such that $|I_k| = {n_k}$. 

At step $k$, working on $\tau_k$, we first tempt a generic set $G$ extending $\tau_k$ to enumerate an element of $I_k$ into $\Psi_{e,\s}(G)$. We will need to ensure that if $G$ does not oblige, that is, if we find no extension $\rho$ of $\tau_k$ for which there is some $x_k\in I_k\cap \Psi_{e,\s}(\rho)$, then we will have already ensured that $\Psi_{e,\s}(G)\notin \SSS_{e,\s}$. We do this by enumerating elements of $I_k$ into each set in $\SSS_{e,\s}$. If we can find such an extension for every $\tau_k$, then a 1-generic set $G$ will contain $x_k$ for infinitely many $k$. We then ensure that no set in $\SSS_{e,\s}$ contains infinitely many of the numbers $x_k$. To ensure this, we will need to ``throw away'' sets $X$ which enumerate too many $x_k$'s into sets in $\Lambda_{e,\s}(X)$; we need to make sure that we do not get rid of too many (in the sense of measure) such sets $X$. 

Note that while we are waiting for $x_k$ to be defined (which may not happen), that is, while we are waiting for an extension $\rho$ of $\tau_k$ which enumerates an element of $I_k$ into $\Psi_{e,\s}(\rho)$, we need to enumerate various elements of $I_k$, among them $x_k$, into sets in $\Lambda_{e,\s}(X)$. In advance, we cannot ensure that any particular element of $I_k$ is enumerated by only few oracles, because if $x_k$ is never defined, we still need to ensure that $\Lambda_{e,\s}(X)$ enumerates the same family $\SSS_{e,\s}$ for most oracles $X$. Thus $x_k$ will be an elements of some sets in $\SSS_{e,\s}$. But by passing to new sets (in a sense using some kind of priority between the ``old'' and the ``new'' parts of $\SSS_{e,\s}$) we can ensure that each element of $S_{e,\s}$ contains only finitely many $x_k$. 

So for $k<\w$, let $x_k$ be the first number discovered such that there is some $\rho\supseteq \tau_k$ with $x_k\in I_k\cap \Psi_{e,\s}(\rho)$. If there is some $k$ for which $x_k$ is not defined (for all $\rho\supseteq \tau_k$, $I_k\cap \Psi_{e,\s}(\rho)$ is empty), let $k^* = k^*(e,\s)$ be the least such $k$. Otherwise, let $k^*=\w$. We can now define $\SSS_{e,\s}$:
\begin{itemize}
	\item If $k^* = \w$, then $A\in \SSS_{e,\s}$ if for some $k<\w$, $I_k\cap A$ is a singleton, and for all $j> k$, $A\cap I_j = I_j\setminus \{x_j\}$. 
	\item If $k^*< \w$, then $A\in \SSS_{e,\s}$ if $A\cap I_{k^*}$ is a singleton, and for all $j>k^*$, $A\cap I_j$ is empty. 
\end{itemize}

The family $\SSS_{e,\s}$ is indeed countable. 

\begin{claim} \label{clm_kstar}\
	\begin{enumerate}
		\item If $k^*=\w$ and $G\supset \s$ is 1-generic, then $\Psi_{e,\s}(G)\notin \SSS_{e,\s}$. 
		\item If $k^*<\w$ and $G\supset \tau_{k^*}$, then $\Psi_{e,\s}(G)\notin \SSS_{e,\s}$. 
	\end{enumerate}
\end{claim}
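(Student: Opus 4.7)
The plan is to handle the two parts separately: part~(2) follows almost directly from the defining property of $k^*$, while part~(1) requires a density argument to exploit the 1-genericity of $G$.

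For part~(2), I would unpack the definition: $k^* < \w$ means that no extension $\rho \supseteq \tau_{k^*}$ satisfies $I_{k^*} \cap \Psi_{e,\s}(\rho) \neq \emptyset$. Since $\Psi_{e,\s}$ is a c.e.\ operator, $\Psi_{e,\s}(G) = \bigcup\{\Psi_{e,\s}(\rho) : \rho \subset G\}$. For $G \supset \tau_{k^*}$ each such $\rho$ is either an extension of $\tau_{k^*}$ (so $\Psi_{e,\s}(\rho) \cap I_{k^*} = \emptyset$) or a proper prefix of $\tau_{k^*}$ (in which case $\Psi_{e,\s}(\rho) \subseteq \Psi_{e,\s}(\tau_{k^*})$, still disjoint from $I_{k^*}$). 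Hence $\Psi_{e,\s}(G) \cap I_{k^*} = \emptyset$. But under $k^* < \w$ every $A \in \SSS_{e,\s}$ has $A \cap I_{k^*}$ a singleton, so $\Psi_{e,\s}(G) \notin \SSS_{e,\s}$.

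For part~(1), the hypothesis $k^* = \w$ guarantees that for every $k$ the search for $x_k$ terminates, yielding a uniformly computable sequence of pairs $(x_k,\rho_k)$ with $\rho_k \supseteq \tau_k$ and $x_k \in I_k \cap \Psi_{e,\s}(\rho_k)$. For each $N < \w$ I would consider the c.e.\ set of strings $W_N = \{\rho_k : k > N\}$. Since $\seq{\tau_k}$ enumerates all extensions of $\s$, for any $\eta \supseteq \s$ there are infinitely many $k$ with $\tau_k \supseteq \eta$, and choosing one with $k > N$ produces $\rho_k \supseteq \eta$ in $W_N$; thus $W_N$ is dense above $\s$. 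Since $G \supset \s$ is 1-generic, $G$ meets $W_N$, i.e.\ some $\rho_k$ with $k > N$ is an initial segment of $G$, whence $x_k \in \Psi_{e,\s}(\rho_k) \subseteq \Psi_{e,\s}(G)$. Letting $N$ vary gives $x_k \in \Psi_{e,\s}(G)$ for infinitely many $k$.

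To conclude part~(1) I would argue by contradiction: if $\Psi_{e,\s}(G) \in \SSS_{e,\s}$, then (using the $k^* = \w$ branch of the definition) there is $k_0$ with $\Psi_{e,\s}(G) \cap I_j = I_j \setminus \{x_j\}$ for all $j > k_0$, so in particular $x_j \notin \Psi_{e,\s}(G)$ eventually, contradicting the previous paragraph. The step requiring the most care is verifying that $W_N$ is dense above $\s$, which rests on the fact that $\seq{\tau_k}$ lists all extensions of $\s$; once that is in hand the c.e.-ness of $W_N$ (uniform computability of $(x_k,\rho_k)$, thanks to $k^* = \w$) and the application of 1-genericity are routine.
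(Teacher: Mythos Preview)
The proposal is correct and follows the same approach as the paper's proof: for part~(2) you use that $I_{k^*}\cap \Psi_{e,\s}(G)=\emptyset$ while every $A\in\SSS_{e,\s}$ meets $I_{k^*}$, and for part~(1) you use 1-genericity to get $x_k\in \Psi_{e,\s}(G)$ for infinitely many $k$ while every $A\in\SSS_{e,\s}$ contains only finitely many $x_k$. Your write-up simply expands the density argument that the paper leaves implicit; the underlying idea is identical.
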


\begin{proof}
	If $k^*<\w$, then for all $G\supset \tau_{k^*}$, $I_{k^*}\cap \Psi_{e,\s}(G)$ is empty. On the other hand, for all $A\in \SSS_{e,\s}$, $I_{k^*}\cap A$ is nonempty. 
	
	Suppose that $k^*=\w$, and that $G\supset \s$ is 1-generic. Then for infinitely many $k$, $x_k\in \Psi_{e,\s}(G)$, whereas for all $A\in \SSS_{e,\s}$, $x_k\in A$ for only finitely many $k$. 
\end{proof}

\begin{claim}
	If $G$ is 2-generic, then $G$ cannot enumerate $\SSS$.
\end{claim}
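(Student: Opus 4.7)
The plan is to show, for each $e < \w$, that $\Phi_e(G)$ is not an enumeration of $\SSS$; since any $G$-c.e.\ enumeration of $\SSS$ would arise as $\Phi_e(G)$ for some $e$, this will suffice. Fix $e$. The key object is the set of strings
\[ D_e = \left\{ \tau \ : \ (\exists \s \subseteq \tau)(\exists k)\bigl(k = k^*(e,\s) \text{ and } \tau_k^{\s} \subseteq \tau \bigr)\right\}, \]
where $\tau_k^{\s}$ denotes the $k\tth$ string in the enumeration $\seq{\tau_k}_{k<\w}$ of extensions of $\s$, and $k^*$, $x_k$, $I_k$ are all read relative to $(e,\s)$.

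First, I would verify that $D_e$ is $\Sigma^0_2$. The predicate ``$x_k$ is undefined'', i.e.\ $(\forall \rho \supseteq \tau_k^{\s})(I_k \cap \Psi_{e,\s}(\rho) = \emptyset)$, is $\Pi^0_1$; the predicate ``$x_j$ is defined'' is $\Sigma^0_1$. So ``$k = k^*(e,\s)$'' is the $\Delta^0_2$ conjunction ``$x_k$ undefined and $(\forall j < k)(x_j$ defined$)$''. The bounded existential over $\s \subseteq \tau$ together with the existential over $k$ then keep $D_e$ within $\Sigma^0_2$.

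Next, apply the 2-generic dichotomy to $D_e$: either (A) $G$ has an initial segment in $D_e$, or (B) some initial segment $\eta \subseteq G$ admits no extension in $D_e$. In case (A), the witnessing $\s \subseteq G$ and $k = k^*(e,\s)$ give $\tau_k^{\s} \subseteq G$, and Claim \ref{clm_kstar}(2) yields $\Psi_{e,\s}(G) \notin \SSS_{e,\s}$. In case (B), set $\s = \eta$: if $k^*(e,\eta)$ were finite, then $\tau_{k^*(e,\eta)}^{\eta}$ would be an extension of $\eta$ lying in $D_e$ (witnessed by $\s = \eta$ itself), contradicting the choice of $\eta$. So $k^*(e,\eta) = \w$, and since $G$ is 1-generic (being 2-generic) and extends $\eta$, Claim \ref{clm_kstar}(1) yields $\Psi_{e,\eta}(G) \notin \SSS_{e,\eta}$. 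In either case, $\Phi_e(G)$ fails to enumerate $\SSS$.

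The main technical obstacle is keeping $D_e$ at the $\Sigma^0_2$ level, which is exactly what 2-genericity can handle via its dichotomy. The temptation to drop the minimality clause and use the cleaner condition ``$\tau_k^{\s} \subseteq \tau$ and $x_k$ undefined'' must be resisted: a witness $k$ strictly larger than $k^*(e,\s)$ does not contradict $\Psi_{e,\s}(G) \in \SSS_{e,\s}$, because when $k^*(e,\s) < \w$ the elements of $\SSS_{e,\s}$ already have empty intersection with every $I_j$ for $j > k^*(e,\s)$. The minimality clause ``$(\forall j < k)(x_j$ defined$)$'' is $\Sigma^0_1$, and it is this fact that both forces $k = k^*(e,\s)$ (so that Claim \ref{clm_kstar}(2) gives a genuine contradiction) and keeps the complexity of $D_e$ within $\Sigma^0_2$.
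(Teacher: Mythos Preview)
Your argument is correct and is essentially the paper's proof, reorganized so that the 2-generic dichotomy is applied once to $D_e$ and the two cases of Claim~\ref{clm_kstar} are then read off; the paper instead first splits on whether some $\s\subset G$ has $k^*(e,\s)=\w$, and only in the remaining case invokes genericity on the analogous set $D$ of strings $\tau_{k^*(e,\s)}$. Your complexity check is in fact sharper than the paper's: the paper calls $D$ a $\Pi^0_1$ set, but with the minimality clause (which, as you correctly observe, is needed for Claim~\ref{clm_kstar}(2) to apply) the set is only d-c.e., hence $\Sigma^0_2$---still exactly what 2-genericity handles.
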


\begin{proof}
	Let $G$ be 2-generic, and let $e<\w$. We show that there is some $\s$ such that $\Psi_{e,\s}(G)\notin \SSS_{e,\s}$, which shows that $\Phi_e(G)$ is not an enumeration of $\SSS$. 
	
	If there is some $\s\subset G$ such that $k^*(e,\s)=\w$, then by Claim \ref{clm_kstar}(1), $\Psi_{e,\s}(G)\notin \SSS_{e,\s}$. Otherwise, consider the set $D$ of strings $\tau_{k^*(e,\s)}$ as $\s$ ranges over the strings such that $k^*(e,\s)<\w$. Then $D$ is a $\Pi^0_1$ collection of strings which is dense around $G$, and so $G$ has some initial segment in $D$. By Claim \ref{clm_kstar}(2), $\Psi_{e,\s}(G)\notin \SSS_{e,\s}$ for some initial segment $\s$ of $G$. 	
\end{proof}

\

We turn to the definition of $\Lambda_{e,\s}$. For any $X\in 2^\w$, we think of $\Lambda_{e,\s}(X)$ as enumerating sets $A_{k,x,F}(X)$, indexed by $k\le k^*$ (of course we mean $k<\w$ if $k^* = \w$), $x\in I_k$ and $F\subseteq \bigcup_{j< k}I_j$. For brevity, let $Q_k = I_k\times \PP(\bigcup_{j<k}I_j)$ be the set of pairs $(x,F)$ with $x\in I_k$ and $F\subseteq \bigcup_{j<k}I_j$. Let $\seq{s_k}_{k<k^*}$ be a computable increasing sequence of stages such that at stage $s_k$ we observe $x_k$; let $s_{-1}=0$. Then in practice, at stage $s_{k-1}$, we associate each triple $(k,x,F)$ for $(x,F)\in Q_k$ with a fresh index $n$, and will let $\Lambda_{e,\s}(X)^{[n]} = A_{k,x,F}(X)$. At stage $s$ which is not equal to $s_k$ for any $k$, if $\Lambda_{e,\s}(X)^{[s]}$ has not yet been associated with any set $A_{k,x,F}(X)$, then we declare that $\Lambda_{e,\s}(X)^{[s]} = \Lambda_{e,\s}(X)^{[s-1]}$. In this way we ensure that $\Lambda_{e,\s}(X)$ is an enumeration of the family $\GG_{e,\s}(X) = \left\{ A_{k,x,F}(X)  \,:\, k\le k^* \andd (x,F)\in Q_k \right\}$. 

Let $k\ge 0$. At stage $s_{k-1}$ we do the following. 
\begin{itemize}
	\item For all $X\in 2^\w$ and all $(x,F)\in Q_{k}$, enumerate $F\cup \{x\}$ into $A_{k,x,F}(X)$. 
	\item If $k\ge 1$, partition Cantor space $2^\w$ into $n_{k}$ many clopen sets $\CC_{x,k}$ for $x\in I_{k}$, each of measure $\delta_{k}$. Let $x\in I_{k}$; for all $j\le k$, for all $(y,F)\in Q_j$, and all $X\in \CC_{x,k}$, enumerate $x$ into $A_{j,y,F}(X)$. 
	\item If $k\ge 2$, for all $j<k-1$ and all $(y,F)\in Q_j$, for all $X\in 2^\w\setminus \CC_{x_{k-1},k-1}$, enumerate all of $I_{k-1}\setminus\{x_{k-1}\}$ into $A_{j,y,F}(X)$. 
\end{itemize}

This defines the family $\GG_{e,\s}(X)$ of sets $A_{k,x,F}(X)$ for all $X$, and so the functional $\Lambda_{e,\s}$. For positive $k<k^*$, let $\PP_k = \PP_k(e,\s) = 2^\w\setminus \CC_{x_k,k}$; let $\PP_{e,\s} = \bigcap_{k<k^*}\PP_k$. The class $\PP_{e,\s}$ is a $\Pi^0_1$ class (uniformly in $e$ and $\s$), and the measure of $\PP_{e,\s}$ is at least $1-\sum_{k<\w}\delta_k \ge 1 - \epsilon_{e,\s}$. What remains is to show that for all $X\in \PP_{e,\s}$, $\Lambda_{e,\s}(X)$ is an enumeration of $\SSS_{e,\s}$, that is, that for each $X\in \PP_{e,\s}$ we have $\SSS_{e,\s} = \GG_{e,\s}(X)$. 

\begin{claim} \label{clm_meagre_conull_main}
	Let $X\in \PP_{e,\s}$. Let $k\le k^*$ and $(x,F)\in Q_k$.
	\begin{enumerate}
		\item If $k^* = \w$, then $A_{k,x,F}(X)  = F \cup \{x\} \cup\bigcup_{j> k} (I_j\setminus \{x_j\})$.
		\item If $k^*< \w$, then $A_{k,x,F}(X) = B\cup \{y\}$ where $(y,B)\in Q_{k^*}$. If $k=k^*$ then $A_{k^*,x,F} = F\cup\{x\}$.
	\end{enumerate}
\end{claim}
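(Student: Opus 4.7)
My plan is to fix $X \in \PP_{e,\s}$ and a triple $(k, x, F)$ with $k \leq k^*$ and $(x,F) \in Q_k$, and then carry out a stage-by-stage inventory of exactly which elements enter $A_{k,x,F}(X)$. For each $m \geq 1$ (including $m = k^*$ in the case $k^* < \w$), let $\hat{x}_m \in I_m$ denote the unique element with $X \in \CC_{\hat{x}_m, m}$; since $\PP_{e,\s} = \bigcap_{m < k^*}\PP_m$ and $\PP_m = 2^\w \setminus \CC_{x_m, m}$, we automatically have $\hat{x}_m \neq x_m$ whenever $m < k^*$, while $\hat{x}_{k^*}$ is unconstrained when $k^* < \w$. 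These are the only elements that the second clause of the construction can deposit into $A_{k,x,F}(X)$.

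With that notation fixed, I would catalogue the three sources of enumeration into $A_{k,x,F}(X)$. The first clause deposits exactly $F \cup \{x\}$ once, at stage $s_{k-1}$. The second clause, executed at stage $s_m$ for $m \geq k$ with $m < k^*$, adds the single element $\hat{x}_{m+1} \in I_{m+1}$; its level index is arranged so that at stage $s_{k-1}$ itself nothing of index $k$ is touched, which is the key bookkeeping point preventing a stray second element of $I_k$ from entering $A_{k,x,F}(X)$. The third clause, executed at stage $s_m$ for $m \geq k+1$ with $m < k^*$, dumps in the whole of $I_m \setminus \{x_m\}$ (the hypothesis $X \in \PP_m$ guarantees that this clause actually fires for our oracle). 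No other action of the construction ever touches $A_{k,x,F}(X)$.

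Finally, I would collapse the catalogue. When $k^* = \w$: for every $j > k$, the element $\hat{x}_j$ contributed by the second clause lies in $I_j \setminus \{x_j\}$ and so is absorbed into the third-clause contribution from stage $s_j$, and the total telescopes to $F \cup \{x\} \cup \bigcup_{j > k}(I_j \setminus \{x_j\})$, giving part~(1). When $k^* < \w$ and $k < k^*$: the same absorption works at every level strictly between $k$ and $k^*$, but the third clause never fires at level $k^*$ because no stage $s_{k^*}$ exists, so the single element $y := \hat{x}_{k^*}$ is the only $I_{k^*}$-contribution; setting $B := F \cup \{x\} \cup \bigcup_{k < j < k^*}(I_j \setminus \{x_j\})$, one checks $(y,B) \in Q_{k^*}$ and reads off $A_{k,x,F}(X) = B \cup \{y\}$. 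When $k = k^*$ no stage after $s_{k^*-1}$ acts on our index, so only the first clause has fired and $A_{k^*,x,F}(X) = F \cup \{x\}$. The main obstacle is precisely the bookkeeping around the second clause's index range, namely verifying that at the defining stage $s_{k-1}$ no extra element of $I_k$ is slipped into $A_{k,x,F}(X)$; once that point is nailed down, the rest of the argument is routine union-taking.
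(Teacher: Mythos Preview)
Your proposal is correct and essentially identical to the paper's own proof: both carry out a stage-by-stage inventory of which elements each of the three construction clauses deposits into $A_{k,x,F}(X)$, and then collapse the resulting unions. Your explicit notation $\hat x_m$ for the unique element of $I_m$ with $X\in\CC_{\hat x_m,m}$, and your flagging of the second-clause index-range bookkeeping as the crux, make your write-up a touch more explicit than the paper's, which handles the same points more tersely (writing ``$y$, where $X\in\CC_{y,j}$'' inline rather than naming it).
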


\begin{proof}
	Suppose that $k^*=\w$. At stage $s_{k-1}$, we enumerate $F\cup\{x\}$ into $A_{k,x,F}(X)$; at later stages we do not enumerate any other element of $\bigcup_{j\le k}I_j$ into $A_{k,x,F}(X)$. Let $j>k$. At stage $s_{j-1}$ we enumerate $y$ into $A_{k,x,F}(X)$, where $X\in \CC_{y,j}$. Since $X\notin \CC_{x_j,j}$, we have $y\ne x_j$. At stage $s_j$ we enumerate the rest of $I_j\setminus \{x_j\}$ into $A_{k,x,F}(X)$; at no later stage is $x_j$ enumerated into $A_{k,x,F}(X)$.
	
	Now suppose that $k^*<\w$. At stages $s_{j-1}$ for $j<k^*$, only numbers in $\bigcup_{i<k^*}I_i$ are enumerated into $A_{k,x,F}$. If $k<k^*$, then at stage $s_{k^*-1}$ we enumerate $y$ into $A_{k,x,F}(X)$, where $X\in \CC_{y,k^*}$; no other element of $I_{k^*}$ is enumerated into $A_{k,x,F}(X)$. If $k=k^*$ then at stage $s_{k^*-1}$ we enumerate $F\cup \{x\}$ into $A_{k,x,F}(X)$. In either case, as $s_{k^*-1}$ is the last stage at which we enumerate any numbers into $A_{k,x,F}(X)$, we see that $I_{k^*}\cap A_{k,x,F}(X)$ is a singleton; and that for all $j>k^*$, no element of $I_j$ is ever enumerated into $A_{k,x,F}(X)$.
\end{proof}

A short examination of the definition of $\GG_{e,\s}$ now shows that for all $X\in \PP_{e,\s}$, $\GG_{e,\s}(X) = \SSS_{e,\s}$. This completes the proof of Theorem \ref{thm_meagre_co_null_enumeration}.

\bibliography{bftypes}
\bibliographystyle{alpha}

\end{document}